\newtheorem{theo}{Theorem}[section]
\newtheorem{remark}[theo]{Remark}
\newtheorem{lemma}[theo]{Lemma}
\newtheorem{coro}[theo]{Corollary}
\newtheorem{con}[theo]{Conjecture}
\newtheorem{prop}[theo]{Proposition}
\newtheorem{fact}[theo]{Fact}
\newtheorem{defi}[theo]{Definition}
\newtheorem{algorithm}[theo]{Algorithem}
\newcommand{\qed}{\hspace*{\fill} \rule{7pt}{7pt}}
\begin{document}
\date{}
\title{ On a conjecture of Hefetz and Keevash on  Lagrangians of intersecting hypergraphs and Tur\'an numbers}
\author{ Biao Wu \thanks{ College of Mathematics and Econometrics, Hunan University, Changsha, 410082, P.R. China. Email: wubiao@hnu.edu.cn.}  \and{ Yuejian Peng \thanks{Corresponding author. Institute of Mathematics, Hunan University, Changsha, 410082, P.R. China. Email: ypeng1@hnu.edu.cn \ Supported in part by National Natural Science Foundation of China (No. 11671124).}} \and Pingge Chen \thanks{ College of Mathematics and Econometrics, Hunan University, Changsha, 410082, P.R. China. Email: chenpingge@hnu.edu.cn. }
}

\maketitle
\begin{abstract}
Let $S^r(n)$ be the $r$-graph on $n$ vertices with parts $A$ and $B$, where the edges consist of all $r$-tuples with $1$
 vertex in $A$ and $r-1$ vertices in $B$, and the sizes of $A$ and $B$ are chosen to maximise the number of edges.
 Let $M_t^r$ be the $r$-graph with $t$ pairwise disjoint edges.
 Given an $r$-graph $F$ and a positive integer $p\geq |V(F)|$, we define the {\em extension} of $F$, denoted by $H_{p}^{F}$ as follows:
 Label the vertices of $F$ as $v_1,\dots,v_{|V(F)|}$. Add new vertices $v_{|V(F)|+1},\dots,v_{p}$.
  For each pair of vertices $v_i,v_j, 1\le i<j \le p$ not contained in an edge of $F$,
 we add a set $B_{ij}$ of $r-2$ new vertices and the edge $\{v_i,v_j\} \cup B_{ij}$,
 where the $B_{ij}$ 's are pairwise disjoint over all such pairs $\{i,j\}$. Hefetz and Keevash conjectured that the Tur\'an number of the extension of $M_2^r$ is
 ${1 \over r}n\cdot{{r-1\over r}n \choose r-1}$ for $r \ge 4$ and sufficiently large $n$.
 Moreover, if $n$ is sufficiently large and $G$ is an $H_{2r}^{M_2^r}$-free $r$-graph with $n$ vertices
 and ${1 \over r}n\cdot{{r-1\over r}n \choose r-1}$ edges, then $G$ is isomorphic to $S^r(n)$.
 In this paper, we confirm the above conjecture for $r=4$.
\end{abstract}
Key Words: Tur\'an number, Hypergraph Lagrangian, Intersecting family

\section{Introduction}

For a set $V$ and a positive integer $r$ we denote by $V^{(r)}$ the family of all $r$-subsets of $V$. An {\em $r$-uniform graph} or {\em $r$-graph $G$} consists of a vertex set $V(G)$ and an edge set $E(G) \subseteq V(G) ^{(r)}$. We sometimes write the edge set of $G$ as $G$. Let $|G|$ denote the number of edges of $G$. An edge $e=\{a_1, a_2, \ldots, a_r\}$ will be simply denoted by $a_1a_2 \ldots a_r$. An $r$-graph $H$ is  a {\it subgraph} of an $r$-graph $G$, denoted by $H\subseteq G$ if $V(H)\subseteq V(G)$ and $E(H)\subseteq E(G)$.
A subgraph $G$ {\em induced} by $V'\subseteq V$, denoted as $G[V']$, is the $r$-graph with vertex set $V'$ and edge set $E'=\{e\in E(G):e \subseteq V'\}$.
Let $K^{r}_t$ denote the {\em complete $r$-graph} on $t$ vertices. 

Given an $r$-uniform hypergraph $F$,  an $r$-uniform hypergraph $G$ is called $F$-free if it does not contain a copy of $F$ as a subgraph.  The {\em Tur\'an number} of $F$, denoted by $ex(n,F)$, is the maximum number of edges in an $F$-free $r$-uniform hypergraph on $n$ vertices.
 An averaging argument of Katona, Nemetz and Simonovits \cite{KNS} implies the sequence $ ex(n,F) / {n \choose r } $ decreases. So $\lim_{n\rightarrow\infty}  ex(n,F) / {n \choose r }$ exists. The {\em Tur\'{a}n density} of $F$ is defined as $$\pi(F)=\lim_{n\rightarrow\infty} { ex(n,F) \over {n \choose r } }.$$
 For 2-graphs, Erd\H{o}s-Stone-Simonovits determined the Tur\'an densities of all graphs except bipartite graphs. Very few results are known for hypergraphs and a survey on this topic can be found in Keevash's survey paper \cite{Keevash}. Lagrangian has been a useful tool in estimating the Tur{\'a}n density of a hypergraph.

\begin{defi}
For  an $r$-graph $G$ with the vertex set $[n]$,
edge set $E(G)$ and a weighting $\vec{x}=(x_1,\ldots,x_n) \in \mathbb R^n$,
define the Lagrangian function of $G$ as
$$\lambda (G,\vec{x})=\sum_{e \in E(G)}\prod\limits_{i\in e}x_{i}.$$
\end{defi}
The {\em Lagrangian} of
$G$, denoted by $\lambda (G)$, is defined as
 $$\lambda (G) = \max \{\lambda (G, \vec{x}): \vec{x} \in \Delta \},$$
where $$\Delta=\{\vec{x}=(x_1,x_2,\ldots ,x_n) \in \mathbb{R}^{n}: \sum_{i=1}^{n} x_i =1, x_i \ge 0 {\rm \ for \ every\ } i\in[n] \}.$$

The value $x_i$ is called the {\em weight} of the vertex $i$ and a weighting $\vec{x} \in {\Delta}$ is called a {\em feasible weighting}.
A weighting $\vec{y}\in {\Delta}$ is called an {\em optimum weighting} for $G$ if $\lambda (G, \vec{y})=\lambda(G)$.

\begin{fact}
If $G' \subseteq G$ then $\lambda(G') \le \lambda(G)$.
\end{fact}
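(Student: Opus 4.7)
The plan is to exploit the variational definition of the Lagrangian by lifting an optimum weighting for $G'$ to a feasible weighting for $G$ via zero-padding. Since $\lambda(G)$ is a maximum over the simplex on $V(G)$, it suffices to exhibit a single point in $\Delta$ (on $V(G)$) whose value under $\lambda(G,\cdot)$ is at least $\lambda(G')$.

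Concretely, I would first fix an optimum weighting $\vec{y}=(y_i)_{i\in V(G')}$ for $G'$, so that $\sum_{i\in V(G')} y_i=1$, every $y_i\ge 0$, and $\lambda(G',\vec{y})=\lambda(G')$. I would then define $\vec{x}\in\mathbb{R}^{V(G)}$ by $x_i=y_i$ for $i\in V(G')$ and $x_i=0$ for $i\in V(G)\setminus V(G')$. Since $V(G')\subseteq V(G)$, the new coordinates still sum to $1$ and remain non-negative, so $\vec{x}$ is a feasible weighting for $G$.

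The key inequality then falls out by splitting the sum defining $\lambda(G,\vec{x})$ according to whether an edge belongs to $E(G')$ or not. Every edge $e\in E(G')$ lies entirely in $V(G')$, so $\prod_{i\in e}x_i=\prod_{i\in e}y_i$; every edge $e\in E(G)\setminus E(G')$ contributes a non-negative term $\prod_{i\in e}x_i\ge 0$ (in fact zero whenever $e$ meets $V(G)\setminus V(G')$). Summing,
$$\lambda(G,\vec{x})=\sum_{e\in E(G)}\prod_{i\in e}x_i\ \ge\ \sum_{e\in E(G')}\prod_{i\in e}y_i=\lambda(G',\vec{y})=\lambda(G'),$$
and combining with $\lambda(G)\ge \lambda(G,\vec{x})$, which is immediate from the definition of $\lambda(G)$ as a maximum over $\Delta$, yields $\lambda(G)\ge\lambda(G')$.

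The argument uses only the non-negativity of the weights and the inclusion $E(G')\subseteq E(G)$, both built into the hypotheses; the one check worth flagging is that zero-padding preserves membership in $\Delta$, which is trivial. I therefore do not expect any step to pose a real obstacle and the entire proof should take just a few lines.
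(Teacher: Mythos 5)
Your proof is correct, and it is the standard zero-padding argument; the paper states this fact without proof, treating it as immediate from the definition, and your write-up supplies exactly the routine verification one would expect.
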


Given an $r$-graph $F$, the {\em Lagrangian density } $\pi_{\lambda}(F)$ of $F$ is defined as
$$\pi_{\lambda}(F)=\sup \{r! \lambda(G): G \;is\; an\; F\text{-}free \;r\text{-}graph\}.$$
Usually, the difficulty part of obtaining Tur\'an density is to get a good upper bound. The following remark says that the Tur\'an density of an $r$-graph is no more than its Lagrangian density.
\begin{remark} {\em (see Remark 1.2 in \cite{JPW})}
$\pi(F)\le \pi_{\lambda}(F).$
\end{remark}

The Lagrangian method for hypergraph Tur\'an problems were developed independently by Sidorenko \cite{Sidorenko-87} and Frankl-F\"uredi \cite{FF}, generalizing work of Motzkin-Straus \cite{MS} and Zykov \cite{Z}. More recent developments of the method were obtained by Pikhurko \cite{p} and Norin and Yepremyan \cite{NY}. More recent results based on these developments will be introduced later. 

Let $r\ge 3$, $F$ be an $r$-graph and $p\ge |V(F)|$. Let ${\mathcal{K}}_p^F $ denote the family of $r$-graphs $H$ that contains a set $C$ of $p$ vertices, called the {\em core}, such that the subgraph of $H$ induced by $C$ contains a copy of $F$ and such that every pair of vertices in $C$ is {\em covered} in $H$ (A pair of vertices $i,j$ of $H$ is {\em covered} if there exists an edge of $H$ containing both $i,j$.).
Let $H_p^F $ be a member of ${\mathcal{K}}_p^F $obtained as follows. Label the vertices of $F$ as $v_1,\dots,v_{|V(F)|}$. Add new vertices $v_{|V(F)|+1},\dots,v_{p}$. Let $C=\{v_{1},\dots,v_{p}\}$. For each pair of vertices $v_i,v_j \in C$ not covered in $F$, we add a set $B_{ij}$ of $r-2$ new vertices and the edge $\{v_i,v_j\} \cup B_{ij}$, where the $B_{ij}$'s are pairwise disjoint over all such pairs $\{i,j\}$. We call $H_p^F $ the {\em extension} of $F$.

Frankl and F\"uredi \cite{FF} conjectured that for all $r\geq 4$, if $n\geq n_0(r)$ is sufficiently large then $ex(n,\mathcal {K}_{r+1}^{L})=ex(n,H_{r+1}^{L})$, where $L$ is the graph on $r+1$ vertices consisting of two edges sharing $r-1$ vertices. Let $T_{r}(n,l)$ denote the balanced complete $l$-partite $r$-graph on $n$ vertices. Pikhurko \cite{Pikhurko} proved the conjecture for $r=4$, showing that $ex(n,\mathcal {K}_{5}^{L})=ex(n,H_{5}^{L})=e(T_{4}(n,4))$, with the $T_{4}(n,4)$ being the unique extremal graph. Recently, Norin and Yepremyan \cite{NY} proved the conjecture for $r=5$ and $r=6$, moreover, extremal graphs are blowups of the unique $(11,5,4)$ and $(12,6,5)$ Steiner systems for $r=5$ and $r=6$, respectively. For all $n,p,r$, Mubayi \cite{M} and Pikhurko \cite{Pikhurko} showed that $ex(n,\mathcal {K}_{p}^{F})=ex(n,H_{p}^{F})=e(T_{r}(n,p-1))$ with the unique extremal graph being $T_{r}(n,p-1)$, where $F$ is the $r$-uniform empty graph. Mubayi and Pikhurko \cite{MP} showed that for all $r\geq 3$ and all sufficiently large $n$, $ex(n,H_{r+1}^{f})=e(T_{r}(n,r))$, where $f$ is a single $r$-set. Moreover, $T_{r}(n,r)$ is the unique extremal graph. Brandt-Irwin-Jiang \cite{BIJ} and independently Norin and Yepremyan \cite{NY2} showed that for a large family of $r$-graphs $F$ and sufficiently large $n$, $ex(n,H_{p}^{F})=e(T_{r}(n,p-1))$ with the unique extremal graph being $T_{r}(n,p-1)$.

Let $M_t^r$ be the $r$-graph with $t$ pairwise disjoint edges, called {\em $r$-uniform $t$-matching}. Hefetz and Keevash in \cite{HK} determined  the Lagrangian density of $M_2^3$, and showed $ex(n,H_{6}^{M_{2}^{3}})=e(T_{3}(n,5))$ for large $n$ and $T_{3}(n,5)$ is the unique extremal graph. More generally, Jiang-Peng-Wu in \cite{JPW} determined the Lagrangian density of $M_t^3$ and showed that $ex(n,H_{3t}^{M_{t}^{3}})=e(T_{3}(n,3t-1))$ for large $n$ and $T_{3}(n,3t-1)$ is the unique extremal graph.

Let $S^r(n)$ be the $r$-graph on $[n]$ with parts $A$ and $B$, where the edges consist of all $r$-tuples with $1$ vertex in $A$ and $r-1$ vertices in $B$, and the sizes of $A$ and $B$ are chosen to maximise the number of edges (so $|A|\approx n/r)$. Write $s^r(n)=|S^r(n)|$.
When $r=4$, then $|A|=\lfloor {n \over 4} \rfloor$ for $n \equiv 0$ or $1$ or $2$ {\rm (mod $4$)} and $|A|=\lfloor {n \over 4} \rfloor$ or $\lceil{n \over 4}\rceil$ for $n \equiv 3$ {\rm (mod $4$)}. In \cite{HK}, Hefetz and Keevash proposed the following conjecture.
\begin{con} {\em (D. Hefetz and P. Keevash, \cite{HK})}\label{HFcon}
$ex(n,H^{M_2^r}_{2r})={1 \over r}n\cdot{{r-1 \over r}n \choose r-1}$ for $r \ge 4$ and sufficiently large $n$. Moreover, if $n$ is sufficiently large and $G$ is an $H_{2r}^{M_{2}^{r}}$-free $r$-graph on $[n]$ with ${1 \over r}n\cdot{{r-1 \over r}n \choose r-1}$ edges, then $G \cong S^r(n)$. 
\end{con}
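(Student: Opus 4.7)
The plan is to prove Conjecture~\ref{HFcon} in three linked stages. \textbf{(i)} Establish the sharp Lagrangian density bound $\pi_\lambda(M_2^r)=\left(\frac{r-1}{r}\right)^{r-1}$, with the star-type graph $S^r$ as the only extremiser (up to blow-ups). \textbf{(ii)} Transfer this to the Tur\'an density $\pi(H_{2r}^{M_2^r})$. \textbf{(iii)} Upgrade the asymptotic density to the exact count via stability. A direct computation on $S^r(n)$ shows $r!\lambda(S^r(n))\to\left(\frac{r-1}{r}\right)^{r-1}$ as $n\to\infty$, and the edge count $\frac{1}{r}n\binom{(r-1)n/r}{r-1}$ corresponds to precisely this density, so stage (i) is exactly the Hefetz-Keevash Lagrangian conjecture for intersecting $r$-graphs (since $M_2^r$-free is the same as intersecting).

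\textbf{Stage (i).} Let $G$ be an intersecting $r$-graph and $\vec{x}$ an optimum weighting. Standard Motzkin-Straus-type reductions (delete vertices of weight zero, equalise the partial derivatives on the support, and left-compress) let us assume every pair on the support is covered by an edge. Choose a vertex $v$ of maximum weight and decompose
\[
\lambda(G,\vec{x})=x_v\lambda(L_v,\vec{x})+\lambda(G-v,\vec{x}),
\]
where $L_v$ is the link of $v$. The intersecting condition imposes strong constraints on both pieces: $L_v$ must behave like an intersecting $(r-1)$-graph on the remaining support, while any two edges of $G-v$ must share a vertex. Combining inductive estimates on these pieces with the constraint $\sum_i x_i=1$ should force $\lambda(G,\vec{x})\le\left(\frac{r-1}{r}\right)^{r-1}/r!$, with equality only at the star configuration. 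For $r=3$ this was done by Hefetz-Keevash \cite{HK} and for $r=4$ by the present paper; for general $r$ one would try to combine the Frankl-F\"uredi-Pikhurko compression framework with the symmetrisation and quasi-concavity tools of Norin-Yepremyan \cite{NY,NY2} and Brandt-Irwin-Jiang \cite{BIJ}.

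\textbf{Stages (ii) and (iii).} Given stage (i), the inequality $\pi(F)\le\pi_\lambda(F)$ recorded above immediately gives $|G|\le\left(\frac{r-1}{r}\right)^{r-1}\binom{n}{r}+o(n^r)$ for any $H_{2r}^{M_2^r}$-free $r$-graph $G$. The lower-order term is sharpened by a supersaturation argument in the spirit of Sidorenko, Pikhurko and Mubayi: any dense $r$-graph contains many copies of $M_2^r$, and extension-freeness then forces enough non-covered pairs to push the bound down to exactly $\frac{1}{r}n\binom{(r-1)n/r}{r-1}$. For the uniqueness part, stability enters: the Lagrangian uniqueness in (i) shows an almost-extremal $G$ is $\delta$-close in edit distance to $S^r(n)$; one iteratively deletes vertices of abnormal degree, rebuilds the intended bipartition $A\cup B$, and shows that every remaining deviation from the $S^r(n)$-pattern combines with the dense $S^r(n)$-like core to produce a forbidden $H_{2r}^{M_2^r}$. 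This is the same stability template implemented by Hefetz-Keevash \cite{HK} and Jiang-Peng-Wu \cite{JPW}.

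\textbf{Main obstacle.} The heart of the problem is stage (i) for large $r$. For $r\ge 5$ the Hefetz-Keevash Lagrangian conjecture for intersecting $r$-graphs is still open, and the set of intersecting configurations whose Lagrangian comes close to the conjectured maximum grows in richness with $r$ (for instance non-trivial intersecting families supported on larger cores \`a la Frankl), each of which has to be separately ruled out. The ad hoc case analyses that handled $r=3$ and $r=4$ are unlikely to scale, so a genuinely new analytic or combinatorial input\,--\,a strengthened polynomial/entropy inequality, or a uniform treatment of non-trivial intersecting families via improved compression\,--\,will probably be required. Once stage (i) is secured, stages (ii) and (iii) should go through by adapting the now-standard machinery of \cite{HK,JPW,BIJ,NY2}.
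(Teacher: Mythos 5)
What you have written is a programme, not a proof, and its central step is precisely the part that is open. The statement you are asked to prove is stated in the paper as a conjecture; the paper itself only establishes the case $r=4$ (Theorem \ref{theorem}), and its entire difficulty is concentrated in your Stage (i): Theorem \ref{theoremM^4_2}, which pins down $\pi_\lambda(M_2^4)=\frac{27}{64}$ with the star as the essentially unique extremiser, is proved there by reducing to a left-compressed dense intersecting $4$-graph and then running a fifteen-case analysis on which edges among $\{2567,2467,2368,\dots,2345\}$ are present, bounding each case through links of a vertex and ad hoc estimates (Lemmas \ref{F_1}--\ref{F_{7}}). Your sketch of Stage (i) -- decompose $\lambda(G,\vec x)=x_v\lambda(L_v,\vec x)+\lambda(G-v,\vec x)$ and "combine inductive estimates" -- supplies no mechanism for the real enemy: small dense intersecting configurations such as $K_{2r-1}^r$ and its relatives. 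Indeed for $r=3$ the extremiser is $K_5^3$, not the star (which is exactly why the conjecture starts at $r\ge4$), so your appeal to the $r=3$ case as an instance of the formula $\pi_\lambda(M_2^r)=\left(\frac{r-1}{r}\right)^{r-1}$ with star extremiser is wrong, and it illustrates that the clique-versus-star comparison is genuinely delicate and must be won case by case (or by a new idea you do not provide). You acknowledge yourself that Stage (i) is open for $r\ge5$; with that admission the proposal does not prove the conjecture, for any $r$, since even the $r=4$ instance is only asserted to follow from "the present paper".

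Two smaller points. In Stage (ii), the inequality $\pi(F)\le\pi_\lambda(F)$ applied to $F=H_{2r}^{M_2^r}$ does not "immediately" give the bound you state: $\pi_\lambda(H_{2r}^{M_2^r})$ is not a priori equal to $\pi_\lambda(M_2^r)$. One needs Sidorenko's reduction to dense hom-free graphs (Lemma \ref{lemmahom}), the observation that dense graphs cover pairs and hence any two disjoint edges extend to a copy of $H_{2r}^{M_2^r}$ (the proof of Theorem \ref{turannumber1}), and, to pass from $H$-free to $\mathcal K$-free/hom-free, the Brandt--Irwin--Jiang edge-removal step together with Remark 4.5 -- none of which is in your outline. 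In Stage (iii), "supersaturation pushes the bound down to exactly $\frac1r n\binom{(r-1)n/r}{r-1}$" is not how the exact result is obtained; the paper (following Pikhurko and Hefetz--Keevash) proves a stability theorem via a symmetrization process (cleaning and merging), traces the process back to recover the bipartition, and finishes under a minimum-degree reduction. These parts are plausibly adaptable to general $r$, as the paper's closing remarks suggest, but they are downstream of the Lagrangian step that is missing.
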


In this paper, we confirm the above conjecture for $r=4$.

\begin{theo}\label{theorem}
 $ex(n,H^{M_2^4}_{8})=\lfloor{1 \over 4}n\rfloor\cdot{\lceil{3 \over 4}n\rceil \choose 3}$ for sufficiently large $n$. Moreover, if $n$ is sufficiently large and $G$ is an $H_{8}^{M_{2}^{4}}$-free $4$-graph on $[n]$ with $\lfloor{1 \over 4}n\rfloor\cdot{\lceil{3 \over 4}n\rceil \choose 3}$ edges, then $G\cong S^4(n)$.
\end{theo}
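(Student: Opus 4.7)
The overall plan is to follow the Lagrangian density framework that has proved successful for the analogous $r=3$ problems in Hefetz-Keevash and Jiang-Peng-Wu. The asymptotic identity to target is $\pi_\lambda(M_2^4)=27/64$, attained in the limit by the stars $S^4(n)$; once this is in hand, one converts the Lagrangian statement into the Tur\'an statement by a stability-plus-uniqueness argument exploiting the covering structure of $H_8^{M_2^4}$. For the density itself I would show that $4!\lambda(H)\le 27/64$ for every $M_2^4$-free $4$-graph $H$. Take an optimal feasible weighting $\vec x$ of $H$; after the standard reductions one may assume $H$ is intersecting with $\vec x>0$ on $V(H)$. If $H$ is a star with center $v$, writing $a$ for the weight of $v$ and $\vec y$ for the renormalized weighting on $V(H)\setminus\{v\}$ gives $\lambda(H,\vec x)=a(1-a)^3\lambda(L_v,\vec y)$, where $L_v$ is the link $3$-graph of $v$. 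Using the asymptotic bound $\lambda(L_v,\vec y)\le 1/6$ and the fact that $a(1-a)^3$ is maximized at $a=1/4$ with value $27/256$, we obtain $\lambda(H)\le(27/256)\cdot(1/6)=9/512$, i.e.\ $4!\lambda(H)\le 27/64$. The harder case, where $H$ is intersecting but not a star, requires a KKT/switching analysis showing that any non-star intersecting $4$-graph has strictly smaller Lagrangian. A stability refinement should then state that any $M_2^4$-free $4$-graph with Lagrangian close to $9/512$ is close, in edit distance, to a star of the form $S^4$.

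\textbf{From Lagrangian density to the Tur\'an number.} Given an $H_8^{M_2^4}$-free $4$-graph $G$ on $n$ vertices with $e(G)\ge\lfloor n/4\rfloor\binom{\lceil 3n/4\rceil}{3}$, the goal is to show $G\cong S^4(n)$. I would first apply a cleanup to remove vertices of atypically low degree and pairs of atypically low codegree, obtaining a large induced subgraph $G'$ on which one has tight control. The covering property of $H_8^{M_2^4}$ ensures that once $G'$ contains a copy of $M_2^4$ with all $16$ cross pairs simultaneously extendable to edges via $2$ disjoint fresh vertices each, $G$ contains $H_8^{M_2^4}$; a greedy/averaging extension argument shows this happens unless some large induced subgraph of $G$ is essentially $M_2^4$-free. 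Applying the Lagrangian bound from the first step to that subgraph controls its edge count, and combining with the hypothesized edge density of $G$ forces $G$ to be close, in edit distance, to $S^4(n)$.

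\textbf{Stability to uniqueness.} To upgrade ``close to $S^4(n)$'' to ``equal to $S^4(n)$'', I would identify a partition $V(G)=A\cup B$ mimicking the partition of $S^4(n)$ --- for instance via a link-density criterion that separates vertices whose link is concentrated on triples drawn from a small specified subset from those whose link is spread across $V(G)$ --- and then examine any atypical edge $e$ of $G$, meaning an edge with zero or at least two vertices in $A$. For each such $e$, I would exhibit a second disjoint edge $e'$ together with disjoint fresh vertices realizing all $16$ cross-pair extensions, producing a copy of $H_8^{M_2^4}$ and the desired contradiction; the closeness of $G$ to $S^4(n)$ guarantees a large reservoir of candidate fresh vertices. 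Ruling out all atypical edges forces $G\subseteq S^4(n)$ for this partition, and matching the given edge count then yields $G=S^4(n)$.

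\textbf{Main obstacle.} I expect the hardest part to be the Lagrangian density step above, specifically the case analysis ruling out non-star intersecting configurations --- intersecting $4$-graphs admit richer non-star structures than their $3$-uniform analogues, and each must be shown to have Lagrangian strictly below the star bound $9/512$. The cleanup-and-greedy extension argument and the partition identification in the subsequent steps, while technically involved, follow well-established templates from the extension literature once the stability version of the Lagrangian density bound is in place.
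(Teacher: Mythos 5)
Your high-level plan mirrors the paper's exactly: establish $\pi_{\lambda}(M_2^4)=27/64$ with the star $\mathcal{S}$ as the unique maximiser, pass to the Tur\'an density via the hom-free/pair-covering equivalence, prove a stability theorem, and finally rule out atypical edges against a partition $V=A\cup B$ to get the exact result. The star computation $\lambda(\mathcal{S}_n,\vec{x})=a(1-a)^3\lambda(L_v,\vec{y})\le\frac{27}{256}\cdot\frac{1}{6}=\frac{9}{512}$ is correct and matches Lemma 3.1. But the central technical content of the paper --- showing that every dense, left-compressed, intersecting $4$-graph not contained in a star has Lagrangian strictly below $9/512$ --- is precisely what you flag as the ``main obstacle'' and then leave as an unspecified ``KKT/switching analysis.'' The paper resolves it by compressing until $\{1,2\}$ is a vertex cover and $2468\notin\mathcal{F}$, then running a $15$-case argument indexed by which of a chain of test edges ($2567$, $2467$, $2368$, \ldots, $2345$) survive, bounding each case's Lagrangian through a link reduction together with explicit $3$-graph Lagrangian estimates (including the estimate for $K_5^3$ minus two edges meeting in two vertices). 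Without a substitute for this, the proposal is an accurate roadmap of the proof rather than a proof.

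The stability step is also under-specified in a way that matters. The paper uses Pikhurko/Hefetz--Keevash symmetrization: cleaning (delete a low-degree vertex) alternates with merging (when $\{u,v\}$ is uncovered and $d(v)\le d(u)$, replace $v$'s link by $u$'s and blow up). Merging is the step that forces the terminating graph $\mathcal{F}^*$ to be a blowup of an $M_2^4$-free $4$-graph, which is where the Lagrangian bound actually applies; one must also verify that merging preserves $K_{4,4}^4$-hom-freeness and does not decrease the edge count. Your proposal mentions only a degree/codegree cleanup plus a greedy $M_2^4$-extension; that suffices to show $\pi(K_{4,4}^4)=27/64$, but not, without more, for the quantitative closeness to $S^4(n)$ that the uniqueness argument needs. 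Supplying the merging/blowup mechanism (or some alternative route to stability) is the other missing ingredient.
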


Let $\mathcal{S}_n$ be the $4$-graph on vertex set $[n]$ with all edges containing a fixed vertex, we call it a {\em star}. $\mathcal{S}_n$ is denoted by $\mathcal{S}$ while $n$ goes to infinity. We apply the Lagrangian method  in the proof and show the following result that the maximum  Lagrangian among all $M_2^{4}$-free $4$-graphs is uniquely achieved by $\mathcal{S}$.
\begin{theo}\label{theoremM^4_2}
Let $\mathcal{F}$ be an $M_2^4$-free $4$-graph on $[n]$. If $\mathcal{F}\nsubseteq \mathcal{S}_n$ then $\lambda(\mathcal{F}) <0.0169< \frac{9}{512}$.
Otherwise, $\lambda(\mathcal{F}) \le {9(n-2)(n-3) \over 512(n-1)^2}$.
\end{theo}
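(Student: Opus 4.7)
The plan is to handle the two cases of the statement separately. In the star case $\mathcal{F}\subseteq\mathcal{S}_n$, every edge contains a fixed vertex, say $1$. For any $\vec{x}\in\Delta$,
$$\lambda(\mathcal{F},\vec{x})\;\le\;x_1\sum_{\{i,j,k\}\subseteq[n]\setminus\{1\}}x_ix_jx_k\;\le\;x_1\cdot\frac{(n-2)(n-3)}{6(n-1)^2}(1-x_1)^3,$$
where the inner sum is bounded by Motzkin--Straus applied to $K_{n-1}^{(3)}$, rescaled to the slice $\sum_{i\neq 1}x_i=1-x_1$. Maximizing $x_1(1-x_1)^3$ over $[0,1]$ gives $27/256$ at $x_1=1/4$, so $\lambda(\mathcal{F})\le 9(n-2)(n-3)/(512(n-1)^2)$, matching the claimed star bound.

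For the non-star case, I would take an optimum weighting $\vec{x}^*$ and pass to its positive support, which preserves both the intersecting and non-star properties. Let $v$ be a vertex of maximum weight $a=x_v^*$, let $L_v=\{f\in\binom{V\setminus\{v\}}{3}:\{v\}\cup f\in\mathcal{F}\}$ be the $3$-uniform link, and let $\mathcal{F}'\subseteq\binom{V\setminus\{v\}}{4}$ denote the edges of $\mathcal{F}$ avoiding $v$. Since $\mathcal{F}$ is non-star, $\mathcal{F}'\ne\emptyset$; fix some $e_0\in\mathcal{F}'$. The intersecting property forces every edge of $\mathcal{F}$ to meet $e_0$, so every $f\in L_v$ meets $e_0$, and $\mathcal{F}'$ is itself intersecting. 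Writing
$$\lambda(\mathcal{F})\;=\;a\sum_{f\in L_v}\prod_{i\in f}x_i^*\;+\;\sum_{e\in\mathcal{F}'}\prod_{i\in e}x_i^*,$$
I would bound the first sum by $\tfrac{(n-2)(n-3)}{6(n-1)^2}(1-a)^3$ \emph{minus} the excluded cubic mass $\sum_{\{i,j,k\}\subseteq V\setminus(\{v\}\cup e_0)}x_i^*x_j^*x_k^*$, and the second sum by either the already-proved star bound on $V\setminus\{v\}$ (if $\mathcal{F}'$ is a star there) or by an inductive application of the present theorem on $n-1$ vertices otherwise. Setting $t=\sum_{u\in e_0}x_u^*$, this produces a two-variable estimate on $a\in[0,1]$ and $t\in[0,1-a]$ with the constraint $t\le 4a$ (maximality of $v$), which I would maximize by elementary calculus.

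The main obstacle is extracting a quantitative gap. The naïve Motzkin--Straus step on $L_v$ only recovers $9/512$ with no slack, so the gap must come from genuinely using that $L_v$ omits the cubic terms on $V\setminus(\{v\}\cup e_0)$; this supplies a loss of order $(1-a-t)^3$ that is strictly positive off the boundary, while the intersecting bound on $\mathcal{F}'$ caps its contribution at order $t^4$. The delicate regime is $a\approx 1/4$ with $t$ small, where the three quantities $a(1-a)^3$, the link defect, and the $\mathcal{F}'$ contribution all become comparable; here a lower bound on $t$ (obtained from the KKT stationarity $\sum_{e\ni u}\prod_{w\in e,\,w\ne u}x_w^*=4\lambda(\mathcal{F})$ applied to each $u\in e_0$, which forces $u$ to carry enough weight for $e_0$'s own contribution to be nontrivial) is what pins down the final constant $0.0169$. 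I expect this analysis to mirror the Hefetz--Keevash treatment of intersecting $3$-graphs (where $2/27$ plays the role of $9/512$), adapted to the $r=4$ uniformity.
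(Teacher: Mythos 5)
Your treatment of the star case is correct and is essentially identical to the paper's Lemma 3.1. The non-star case, however, is where the theorem has all of its content, and there you have only a plan, not a proof, and the plan as stated does not close.

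Concretely: after bounding the $L_v$-contribution by $\tfrac{a}{6}\bigl[(1-a)^3-(1-a-t)^3\bigr]$ and the $\mathcal{F}'$-contribution via the (inductive or star) Lagrangian bound on $V\setminus\{v\}$ — which scales as $\tfrac{9}{512}(1-a)^4$, not as $O(t^4)$, since edges of $\mathcal{F}'$ only need to meet $e_0$ in one vertex and can use three vertices outside $e_0$ — you get, at $a=1/4$ and $t=1-a$, roughly $0.0176+0.0056\approx 0.023$, which is \emph{larger} than $9/512$. So the ``excluded cubic mass'' and the ``$t^4$ cap'' do not combine to produce any gap; the two contributions overcount rather than trade off. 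You flag this yourself (``the main obstacle is extracting a quantitative gap''), but the proposed remedy — a lower bound on $t$ from KKT stationarity — is stated as a hope, not carried out, and it is exactly the part of the argument that would have to do all the work. As written, the proof is incomplete for the non-star case.

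The paper takes a genuinely different route that sidesteps this analytic difficulty. Using the compression algorithm (Lemma~\ref{compression-lemma}), one may assume $\mathcal{F}$ is left-compressed and dense on an optimum support, which forces $\{1,2\}$ to be a vertex cover of $\mathcal{F}$ and $2468\notin\mathcal{F}$. This rigid combinatorial structure reduces the problem to fifteen explicit cases indexed by which small edges are present; in each case the link of vertex $8$ (or vertex $1$) is a small $3$-graph whose Lagrangian can be computed exactly or bounded via Theorem~\ref{PZ} and Lemma~\ref{K_5^3-2}, and each yields a bound strictly below $0.0169$. The advantage of the paper's method is that compression converts a delicate global optimization (your two-variable estimate in $a,t$) into a finite, verifiable case check; the price is a longer but entirely elementary case analysis. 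If you want to pursue your decomposition instead, you would need a much sharper bound on the $\mathcal{F}'$-contribution that genuinely exploits intersection with $e_0$ (e.g., that each edge of $\mathcal{F}'$ loses at least one factor of order $1-a-t$), plus a rigorous lower bound on $t$; neither is present in the proposal.
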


\begin{coro}
$\pi_{\lambda}(M_2^4) = 4!\lambda(\mathcal{S})={27 \over 64}$.
\end{coro}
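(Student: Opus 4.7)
The plan is to derive the corollary as an essentially immediate consequence of the preceding theorem on $\lambda(\mathcal{F})$ for $M_2^4$-free $\mathcal{F}$, combined with a routine computation of $\lambda(\mathcal{S})$.

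First I would compute $\lambda(\mathcal{S}_n)$ directly. Because $\mathcal{S}_n$ is symmetric under permutations of the $n-1$ non-central vertices, Maclaurin's inequality applied to the elementary symmetric polynomial $e_3$ shows that some optimum weighting assigns weight $a\in[0,1]$ to the central vertex and equal weight $(1-a)/(n-1)$ to each of the remaining vertices. Substituting into the definition of the Lagrangian gives
$$\lambda(\mathcal{S}_n,\vec{x})=a{n-1 \choose 3}\left(\frac{1-a}{n-1}\right)^{3}=\frac{(n-2)(n-3)}{6(n-1)^{2}}\,a(1-a)^{3}.$$
Elementary calculus yields $\max_{a\in[0,1]}a(1-a)^{3}=27/256$, attained at $a=1/4$, so
$$\lambda(\mathcal{S}_n)=\frac{9(n-2)(n-3)}{512(n-1)^{2}},\qquad \lambda(\mathcal{S})=\lim_{n\to\infty}\lambda(\mathcal{S}_n)=\frac{9}{512}.$$

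Next I would note that $\mathcal{S}_n$ is $M_2^4$-free, since every edge contains the central vertex and so no two edges can be disjoint. Hence $\mathcal{S}_n$ is a candidate $\mathcal{F}$ in the supremum defining $\pi_{\lambda}(M_2^4)$, giving $\pi_{\lambda}(M_2^4)\ge 4!\,\lambda(\mathcal{S}_n)$ for every $n$; letting $n\to\infty$ produces the lower bound $\pi_{\lambda}(M_2^4)\ge 4!\cdot{9 \over 512}={27 \over 64}$.

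For the matching upper bound I would invoke the preceding theorem: for any $M_2^4$-free $4$-graph $\mathcal{F}$ on $[n]$, either $\lambda(\mathcal{F})<0.0169<9/512$, or $\mathcal{F}\subseteq\mathcal{S}_n$ and hence, by monotonicity of the Lagrangian under subgraphs together with the computation above, $\lambda(\mathcal{F})\le\lambda(\mathcal{S}_n)\le 9/512$. Either way $4!\,\lambda(\mathcal{F})\le 27/64$, so $\pi_{\lambda}(M_2^4)\le 27/64$, matching the lower bound. The only substantive step is the symmetric optimization of $\lambda(\mathcal{S}_n)$; beyond that, the proof is just bookkeeping plus a numerical check that $0.0169<9/512\approx 0.01758$, so no real obstacle remains once the preceding theorem is in hand.
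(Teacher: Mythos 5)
Your proposal is correct and takes essentially the same route the paper does: compute $\lambda(\mathcal{S}_n)$ by symmetrizing the non-central weights (the paper's Lemma~\ref{F_0} does exactly this calculation, getting $\lambda(\mathcal{S}_n)=\tfrac{9(n-2)(n-3)}{512(n-1)^2}$ and $\lambda(\mathcal{S})=\tfrac{9}{512}$), note $\mathcal{S}_n$ is $M_2^4$-free for the lower bound, and invoke Theorem~\ref{theoremM^4_2} together with monotonicity of $\lambda$ for the upper bound. The paper leaves this corollary unproved as an immediate consequence of Theorem~\ref{theoremM^4_2} and Lemma~\ref{F_0}, and your argument is precisely the intended bookkeeping.
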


\section{Preliminaries}
Given an $r$-graph $G$ and a set $T$ of vertices, the {\em link} of $T$ in $G$, denoted by $L_G(T)$, is the hypergraph with edge set $\{e\in {V(G) \choose r-|T|}:e \cup T \in E(G)\}$.
If $T=\{i\}$, we write $L_G(\{i\})$ as $L_G(i)$ for short.
Let $i, j \in V(G)$, denote $$L_G(i \setminus j)=\{e \in {V(G) \choose r-1} : j \notin e, e \cup \{i\} \in E(G) {\rm \ and \ } e \cup \{j\} \notin E(G)\}.$$
We sometimes drop the subscript $G$.
If $L(i\setminus j)=L(j\setminus i)$ and  $\{i,j\}$ is not contained in any edge of $G$, then we say that $i$ and $j$ are {\em equivalent} and write $i\sim j$.
We say $G$ on vertex set $[n]$ is {\em left-compressed} if for every $i,j$, $1\le i < j \le n$, $L_G(j \setminus i)= \emptyset$.
 Given $i,j \in V(G)$, define
$$\pi_{ij}(G)=\left(E(G)\setminus \{\{j\}\cup F: F\in  L_G(j \setminus i)\} \right) \bigcup \{ \{i\} \cup F: F \in L_G(j \setminus i) \}.$$
By the definition of $\pi_{ij}(G)$, it's easy to see the following fact.

\begin{fact}\label{compression-preserve}
Let $G$ be an $r$-graph on vertex set $[n]$. Let $\vec{x}=(x_1,x_2,\dots,x_n)$ be a feasible weighing of $G$. If $x_i \ge x_j$, then $\lambda(\pi_{ij}(G),\vec{x})\ge \lambda(G,\vec{x})$.
\end{fact}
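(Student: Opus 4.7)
The plan is a direct computation based on carefully tracking which monomials change between $\lambda(G,\vec{x})$ and $\lambda(\pi_{ij}(G),\vec{x})$. I would first partition $E(G)$ into three classes: (a) edges not containing $j$; (b) edges of the form $\{j\}\cup F$ with either $i\in F$ or $\{i\}\cup F\in E(G)$; and (c) edges $\{j\}\cup F$ with $F\in L_G(j\setminus i)$. By the definition of $\pi_{ij}$, the edges in (a) and (b) all remain in $\pi_{ij}(G)$, while the edges in (c) are removed and replaced by the new edges $\{i\}\cup F$. The definition of $L_G(j\setminus i)$ forces $\{i\}\cup F\notin E(G)$ for each such $F$, so no duplicate edges are introduced and $E(\pi_{ij}(G))$ is exactly this modified family.

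With this bookkeeping, the monomials coming from classes (a) and (b) contribute identically to both Lagrangian functions, so
\[
\lambda(\pi_{ij}(G),\vec{x}) - \lambda(G,\vec{x}) = \sum_{F\in L_G(j\setminus i)}(x_i-x_j)\prod_{k\in F}x_k.
\]
Since $x_i\ge x_j$ by hypothesis and every $x_k\ge 0$ because $\vec{x}\in\Delta$, this sum is non-negative, yielding the desired inequality. There is essentially no obstacle here: the statement reduces to correctly identifying the collection of edges that the compression moves and then factoring $(x_i-x_j)$ out of the resulting telescoping difference. The only subtle point worth double-checking is that $F\in L_G(j\setminus i)$ implies $i,j\notin F$ (so that $\{i\}\cup F$ and $\{j\}\cup F$ are genuine $r$-sets with the same contribution up to the $x_i$ versus $x_j$ factor), which is immediate from the definition of $L_G(j\setminus i)$.
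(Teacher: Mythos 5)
Your proof is correct and is essentially the intended argument: the paper does not spell out a proof at all (it just says the fact is ``easy to see'' from the definition of $\pi_{ij}$), and the bookkeeping you carry out—identifying the edges $\{j\}\cup F$ with $F\in L_G(j\setminus i)$ as the only ones that move, noting no collisions arise, and factoring $(x_i-x_j)\prod_{k\in F}x_k\ge 0$—is exactly the computation one would write down to justify it.
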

The following lemma plays an important role in the proof.

\begin{lemma}{\rm (see e.g. \cite{frankl-survey})}\label{t-free}
Let $G$ be an $M_t^r$-free $r$-graph on vertex set $[n]$. Then for every pair $i,j$ with $1\le i \neq j \le n$, $\pi_{ij}(G)$ is $M_t^r$-free.
\end{lemma}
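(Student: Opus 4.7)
The plan is a proof by contradiction that starts from a putative $t$-matching $M = \{e_1,\ldots,e_t\}$ in $\pi_{ij}(G)$ and produces a $t$-matching in $G$, contradicting the assumption that $G$ is $M_t^r$-free. The first observation is structural: every edge of $\pi_{ij}(G)\setminus G$ has the form $\{i\}\cup F$ with $F\in L_G(j\setminus i)$, so in particular every ``new'' edge contains the vertex $i$. Since $M$ is a matching, this means at most one $e_k$ can be a new edge.

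If no $e_k$ is new, then $M\subseteq G$ and we are done at once. Otherwise, relabel so that $e_1 = \{i\}\cup F_1$ is the unique new edge; by the definition of $L_G(j\setminus i)$, we have $\{j\}\cup F_1 \in G$ and $\{i\}\cup F_1 \notin G$. The obvious move is to swap $e_1$ for $e_1' = \{j\}\cup F_1$, which lies in $G$. This immediately succeeds if $j$ does not appear in any of $e_2,\ldots,e_t$: then $\{e_1', e_2, \ldots, e_t\}$ is a $t$-matching in $G$ since $j\notin F_1$ and the other $e_k$'s are already disjoint from $e_1$.

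The remaining case, which I expect to be the main technical step, is that some $e_k$, say $e_2 = \{j\}\cup F_2$, already contains $j$. Here the trick is a simultaneous double swap: in addition to replacing $e_1$ by $\{j\}\cup F_1$, also replace $e_2$ by $e_2' = \{i\}\cup F_2$. To justify that $\{i\}\cup F_2 \in G$, note that $e_2\in\pi_{ij}(G)$ is not a new edge (new edges contain $i$, whereas $e_2$ is disjoint from $e_1\ni i$), so $e_2$ is an edge of $G$ that survived the compression; by the definition of $\pi_{ij}$, this forces $F_2 \notin L_G(j\setminus i)$, and together with $\{j\}\cup F_2\in G$ and $i\notin F_2$ this yields $\{i\}\cup F_2\in G$. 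The resulting family $\{e_1', e_2', e_3,\ldots, e_t\}$ is then a matching in $G$: pairwise disjointness follows from disjointness of the original $e_k$'s together with the fact that $i$ and $j$ each appear in exactly one of the two swapped edges and in none of $e_3,\ldots,e_t$. This contradiction completes the proof; the only delicate bookkeeping lies in extracting $\{i\}\cup F_2\in G$ from the non-removal of $e_2$ under $\pi_{ij}$.
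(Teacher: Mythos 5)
Your proof is correct and complete. The paper does not include its own proof of this lemma (it is cited to the Frankl survey), but your argument is the standard shifting argument that such references give. All the small points are handled correctly: at most one edge of the putative matching can be ``new'' since every new edge contains $i$; in the easy subcase ($j$ avoids $e_2,\dots,e_t$) a single swap $\{i\}\cup F_1 \mapsto \{j\}\cup F_1$ suffices because $j\notin F_1$ (as $F_1\cup\{j\}$ is an edge of size $r$); and in the main subcase the fact that $e_2=\{j\}\cup F_2$ survived the compression exactly encodes $F_2\notin L_G(j\setminus i)$, which together with $i\notin F_2$ (disjointness from $e_1$) and $F_2\cup\{j\}\in G$ forces $F_2\cup\{i\}\in G$. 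The final disjointness check is also right: $i$ appears only in $e_1$ and $j$ only in $e_2$ among the original matching, so after the double swap each of $i,j$ again lies in exactly one of the modified edges. Nothing is missing.
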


An $r$-graph $G$ is {\em dense} if and only if every proper subgraph $G'$ of $G$ satisfies $\lambda (G') < \lambda (G)$. This is equivalent to that all optimum weightings of $G$ are in the interior of ${\Delta}$, which means no coordinate in an optimum weighting is zero.
\begin{algorithm}\label{left-compression}{\rm (Dense and compressed \cite{JPW})}

\noindent{\bf Input:}   An $M_t^r$-free $r$-graph $G$ on $[n]$.

\noindent{\bf Output:}  A dense and left-compressed $M_t^r$-free $r$-graph with non-decreasing Lagrangian and an optimum weighting  $\vec{x}$ satisfying $x_{i}\geq x_{j}$ if $i\leq j$.

\noindent{\bf Step 1.} If $G$ is not dense, then replace $G$ by a dense subgraph with
the same Lagrangian. Otherwise, go to Step 2.

\noindent{\bf Step 2.} Let  $\vec{x}$ be an optimum weighting of $G$. If $G$ is left-compressed,
then terminate. Otherwise,
relabel the vertices of $G$ such that $x_i\geq x_j$ for all $i < j$ if necessary.
So there exist vertices  $i,j$ such that $x_i\geq x_j$ and $L_G(j \setminus i)\neq \emptyset $,
then replace $G$ by $\pi_{ij}(G)$ and go to step 1.
\end{algorithm}
Note that the algorithm terminates after finite many steps since Step 1 reduces the number of vertices by at least 1  each time and Step 2 reduces the parameter $s(G)=\sum_{e\in G}\sum_{i\in e}i$ by at least 1  each time.
Applying Fact \ref{compression-preserve}, Lemma \ref{t-free}
and the fact that Algorithms \ref{left-compression} terminate after finite many steps,  we get
the following lemma.

\begin{lemma} \label{compression-lemma}
Let $G$ be an $M^r_t$-free $r$-graph. Then there exists an $M^r_t$-free dense and left-compressed $r$-graph $G'$ with $\vert V(G')\vert \le \vert V(G)\vert$
such that $\lambda(G')\geq \lambda(G)$. \qed
\end{lemma}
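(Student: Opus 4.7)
The proof will be a direct invocation of Algorithm~\ref{left-compression}: starting from the given $M_t^r$-free hypergraph $G$, I run the algorithm until it terminates, and the output hypergraph $G'$ is by construction dense and left-compressed. So the task is really just to verify that each iteration of the algorithm preserves the three desired properties (being $M_t^r$-free, non-decreasing Lagrangian, non-increasing vertex count) and that the process terminates in finitely many steps.

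For the Step~1 operation, if $G$ is not dense then some proper subgraph $G_0 \subsetneq G$ satisfies $\lambda(G_0) = \lambda(G)$; iterating we may pass to a dense subgraph without increasing the number of vertices or changing $\lambda$. Since any subgraph of an $M_t^r$-free hypergraph is itself $M_t^r$-free, this step preserves $M_t^r$-freeness. For the Step~2 operation, we fix an optimum weighting $\vec{x}$ and (after relabeling) pick $i < j$ with $x_i \ge x_j$ and $L_G(j \setminus i) \neq \emptyset$, then replace $G$ by $\pi_{ij}(G)$. Fact~\ref{compression-preserve} gives $\lambda(\pi_{ij}(G)) \ge \lambda(\pi_{ij}(G), \vec{x}) \ge \lambda(G, \vec{x}) = \lambda(G)$, so the Lagrangian is not decreased; Lemma~\ref{t-free} guarantees that $\pi_{ij}(G)$ remains $M_t^r$-free; and $\pi_{ij}$ does not introduce any new vertex. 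Composing Steps~1 and~2, at every intermediate stage we have an $M_t^r$-free hypergraph whose Lagrangian is at least $\lambda(G)$ and whose vertex set is contained (after relabeling) in $V(G)$.

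For termination, each Step~1 strictly decreases the number of vertices, and each Step~2 strictly decreases the quantity $s(G) = \sum_{e \in G} \sum_{i \in e} i$ (since every edge of the form $\{j\} \cup F$ with $F \in L_G(j \setminus i)$ is replaced by $\{i\} \cup F$, lowering $s$ by $j - i \ge 1$ per such edge). Both parameters are non-negative integers bounded above by functions of $|V(G)|$ and $|E(G)|$, so only finitely many iterations can occur. When the algorithm halts, $G'$ is dense, left-compressed, $M_t^r$-free, has $|V(G')| \le |V(G)|$, and $\lambda(G') \ge \lambda(G)$, as required.

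I do not anticipate a real obstacle here: the content of the lemma is packaged entirely by Fact~\ref{compression-preserve}, Lemma~\ref{t-free}, and the termination remark made just after Algorithm~\ref{left-compression}. The only point that needs to be handled carefully is that relabeling of vertices between iterations does not invalidate the monotonicity arguments (both $|V|$ and $s$ are label-independent once we recompute $s$ after each relabeling, and the bound $|V(G')| \le |V(G)|$ is preserved because vertices are only deleted, never added).
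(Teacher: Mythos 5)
Your proof takes essentially the same route as the paper's: the paper establishes Lemma~\ref{compression-lemma} by running Algorithm~\ref{left-compression}, citing Fact~\ref{compression-preserve} (compression does not decrease the Lagrangian) and Lemma~\ref{t-free} (compression preserves $M_t^r$-freeness), and appealing to the termination remark built on $|V(G)|$ and $s(G)=\sum_{e\in G}\sum_{i\in e}i$. One small caution on your closing parenthetical: $s(G)$ is certainly not label-independent (it is defined through the labels), so the relabeling inside Step~2 genuinely changes its value; what saves the argument is that Step~1 strictly decreases $|V|$, and within a maximal run of Step~2 iterations (where $|V|$ is fixed) one tracks $s(G)$ with respect to the current sorted labeling. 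The paper glosses over this point in exactly the same way you do, so your proposal is faithful to the paper's argument.
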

We need the following result to estimate the Lagrangians of some hypergraphs.

\begin{theo}{\rm (\cite{PZ})} \label{PZ} Let $m$ and $l$ be positive integers satisfying ${l-1 \choose 3} \le m \le {l-1 \choose 3} + {l-2 \choose 2}$. Let $G$ be a $3$-graph with $m$ edges and $G$ contains a clique of order  $l-1$. Then $\lambda(G) = \lambda([l-1]^{(3)})$.
\end{theo}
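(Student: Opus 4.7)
The lower bound $\lambda(G) \ge \lambda([l-1]^{(3)})$ is immediate from Fact 1.2 and the hypothesis $[l-1]^{(3)} \subseteq G$, so the task is the matching upper bound $\lambda(G) \le \lambda([l-1]^{(3)}) = {l-1 \choose 3}/(l-1)^3$.

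First I would compress: an analogue of Algorithm 2.3 (which only requires Fact 2.2, not $M_t^r$-freeness) shows that we may pass to a dense, left-compressed $3$-graph $G'$ with $|G'| = m$ and $\lambda(G') \ge \lambda(G)$. Left-compression together with the bound $m \le {l-1 \choose 3} + {l-2 \choose 2}$ then forces $V(G') \subseteq [l]$ and
\[
E(G') \;=\; [l-1]^{(3)} \cup E', \qquad E' \subseteq \bigl\{\{i,j,l\} : 1 \le i < j \le l-1\bigr\}, \quad |E'| \le {l-2 \choose 2},
\]
with the pair-projection $\{\{i,j\} : \{i,j,l\} \in E'\}$ being an initial segment of the colex order on $[l-1]^{(2)}$.

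Next, take an optimum weighting $\vec y \in \Delta$. If $y_l = 0$ then $\lambda(G',\vec y) = \lambda\bigl([l-1]^{(3)}, (y_1,\dots,y_{l-1})\bigr)$, and the classical Motzkin--Straus inequality for the complete $3$-graph gives $\lambda(G',\vec y) \le \lambda([l-1]^{(3)})$. If $y_l > 0$, I would split
\[
\lambda(G',\vec y) \;=\; \lambda\bigl([l-1]^{(3)}, (y_1,\dots,y_{l-1})\bigr) \;+\; y_l \sum_{\{i,j,l\}\in E'} y_i y_j,
\]
bound the first summand by $(1-y_l)^3 \lambda([l-1]^{(3)})$ (Motzkin--Straus), bound the pair-sum using a Motzkin--Straus-type estimate informed by the colex-initial structure of $E'$, and reduce the whole estimate to a one-variable inequality in $y_l$ whose maximum on $[0,1]$ is achieved at $y_l = 0$ with value $\lambda([l-1]^{(3)})$. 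Alternatively, this step can be carried out by invoking the first-order optimality condition $\partial \lambda / \partial y_l = \partial \lambda / \partial y_i = 3\lambda(G')$ for $i \in [l-1]$ and showing that shifting weight from $y_l$ uniformly onto $[l-1]$ cannot decrease the Lagrangian.

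The main obstacle is that this final one-variable inequality is tight exactly at the threshold $m = {l-1 \choose 3} + {l-2 \choose 2}$---one additional edge, namely $\{l-2,l-1,l\}$, would already make $y_l > 0$ strictly profitable and the conclusion would fail. Accordingly the derivative check at $y_l = 0$ cannot be performed using a generic Motzkin--Straus bound on an arbitrary $2$-graph with ${l-2 \choose 2}$ edges; one must exploit the specific colex-initial structure of $E'$ (in the extremal case, the pair-projection is exactly $[l-2]^{(2)}$). Once this structural input is in hand, the closing step reduces to a routine polynomial inequality in $l$ that can be verified by elementary calculus.
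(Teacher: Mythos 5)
This statement is cited from [PZ] and carries no proof in the paper, so I can only assess your argument on its own terms. The high-level plan (reduce to a left-compressed extremal configuration on $[l]$, then show $y_l=0$ is optimal) is a reasonable one, and you correctly sense where the danger lies, but the key structural step is not justified and in part is actually false. First, Algorithm 2.3's Step 1 (pass to a dense subgraph) can delete vertices and edges, so you cannot conclude $|G'|=m$, nor that the clique survives; you would have to argue separately that no clique vertex acquires zero weight, and you would have to explain why $|V(G')|\le l$ follows at all. The edge count alone does not force this: the $3$-uniform star $\mathcal{S}_{l+1}=\{1ij:2\le i<j\le l+1\}$ is dense, left-compressed, and has $\binom{l}{2}$ edges, which is $\le \binom{l-1}{3}+\binom{l-2}{2}$ already for $l\ge 6$. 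Second, and more seriously, left-compression of the $3$-graph $G'=[l-1]^{(3)}\cup\{ijl:\{i,j\}\in P\}$ only forces $P$ to be a left-compressed $2$-graph on $[l-1]$, \emph{not} a colex-initial segment. The star $P=\{\{1,k\}:2\le k\le l-1\}$ is left-compressed, has only $l-2\le\binom{l-2}{2}$ edges, yet contains $\{1,l-1\}$; so $P\not\subseteq[l-2]^{(2)}$ and your claim $E'\subseteq\{\{i,j,l\}:i<j\le l-2\}$ fails. This is fatal for your closing move because you need precisely $P\subseteq[l-2]^{(2)}$ to conclude $L_{G'}(l)\subseteq L_{G'}(l-1)$ with $\{l-1,l\}$ uncovered.

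Assuming the structure $P\subseteq[l-2]^{(2)}$ were established, the clean finish is not a ``routine polynomial inequality'' but an application of the paper's Lemma~\ref{lemma-CP}: since $\{l-1,l\}$ is uncovered and $L_{G'}(l)=P\subseteq[l-2]^{(2)}=L_{G'}(l-1)$, one gets $\lambda(G')=\lambda(G'\setminus\{l\})=\lambda([l-1]^{(3)})$ at once. By contrast, the naive bound you set up, $\lambda(G',\vec y)\le (1-y_l)^3\lambda([l-1]^{(3)})+y_l\cdot\frac{l-3}{2(l-2)}(1-y_l)^2$, has \emph{positive} derivative in $y_l$ at $0$ (equal to $\frac{(l-3)(2l-3)}{2(l-2)(l-1)^2}$), so it strictly exceeds $\lambda([l-1]^{(3)})$ for small $y_l>0$; you acknowledge this failure, but you never supply the replacement argument, and the phrase ``routine polynomial inequality'' obscures that the real content is the uncovered-pair/link-containment reduction of Lemma~\ref{lemma-CP}. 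As written the proposal therefore has a genuine gap at the structural reduction to $[l]^{(3)}$ with $P\subseteq[l-2]^{(2)}$, and the closing step is not executed.
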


Let $G$ be an $r$-graph on $[n]$ and $\vec{x}=(x_1,x_2,\dots,x_n)$ be a weighing of $G$. Fix $i \in [n]$, denote
$$ L_{G}(x_i)=\frac{\partial \lambda (G, \vec{x})}{\partial x_i}=\sum_{i \in e \in E(G)}\prod\limits_{j\in e\setminus \{i\}}x_{j}.$$

\begin{fact} {\em (\cite{FR})}\label{fact2}
Let $G$ be an $r$-graph on $[n]$. Let $\vec{x}=(x_1,x_2,\dots,x_n)$ be an optimum weighing for $G$ with $k\leq n$ nonzero weights $x_1,x_2,\cdots,x_k$. Then, for every $\{i,j\} \in [k]^{(2)}$,
\newline
{\em (1)} $L_{G}(x_i)=r\lambda(G)$ ;
\newline
{\em (2)} The pair $i$ and $j$ is covered.
\end{fact}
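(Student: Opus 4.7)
The plan is to deduce both parts from first-order optimality of $\vec{x}$ on the simplex $\Delta$ together with the fact that $\lambda(G, \vec{x})$ is a homogeneous polynomial of degree $r$. Homogeneity gives Euler's identity
\[
\sum_{i=1}^{n} x_i\, L_G(x_i) \;=\; r\,\lambda(G, \vec{x}) \;=\; r\lambda(G),
\]
and only the indices $i \in [k]$ contribute since $x_i = 0$ for $i > k$.

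For part (1), I would fix any $i, j \in [k]$ and consider the feasible line segment in $\Delta$ obtained by moving a small amount $\delta$ of weight from coordinate $j$ to coordinate $i$. Since $x_i, x_j > 0$, this is a genuine perturbation for $|\delta|$ small. The first-order optimality condition along this line reads $L_G(x_i) - L_G(x_j) = 0$, so all values $L_G(x_i)$ with $i \in [k]$ share a common value; by Euler's identity and $\sum_{i \in [k]} x_i = 1$ this common value must equal $r\lambda(G)$.

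For part (2), I argue by contradiction, after first passing, if necessary, to an optimum weighting of smallest possible support by iterating the weight-transfer move described next. Suppose some pair $\{i, j\} \in [k]^{(2)}$ is not covered. Then $\lambda(G, \vec{x})$ contains no monomial involving both $x_i$ and $x_j$, so with all other coordinates viewed as parameters we may write $\lambda(G, \vec{x}) = x_i L_G(x_i) + x_j L_G(x_j) + B$, where $L_G(x_i)$, $L_G(x_j)$, and $B$ do not depend on $x_i$ or $x_j$. Defining $\vec{y}$ by $y_i := x_i + x_j$, $y_j := 0$, and $y_t := x_t$ otherwise yields a point of $\Delta$ satisfying
\[
\lambda(G, \vec{y}) - \lambda(G, \vec{x}) \;=\; x_j\bigl(L_G(x_i) - L_G(x_j)\bigr) \;=\; 0
\]
by part (1). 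Hence $\vec{y}$ is also optimum but has strictly smaller support, contradicting the minimality of the support of $\vec{x}$.

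The one subtle point is the preliminary reduction in part (2): the shifting move does not contradict optimality by itself, only the minimality of the support. This reduction is routine because each application of the shift strictly decreases the support size and therefore terminates in finitely many steps, leaving a minimum-support optimum weighting to which part (2) applies. Once this mild sleight is handled, both assertions follow cleanly from first-order optimality plus homogeneity, and I do not anticipate any serious technical obstacle.
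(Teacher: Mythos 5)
Your argument is the standard Frankl--R\"odl one---Euler's identity for degree-$r$ homogeneity together with first-order stationarity along the feasible line through $e_i - e_j$ gives part~(1), and the weight-shifting move gives part~(2)---and since the paper cites Frankl--R\"odl for this fact without reproducing a proof, there is nothing in the paper to compare against directly. Part~(1) of your proof is complete and correct as written.

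The one substantive point is the one you flag yourself, and it deserves to be stated more forcefully: part~(2), read literally for an \emph{arbitrary} optimum weighting, is false, so passing to a minimum-support weighting is not a tidiness issue but a genuine change to the claim. A minimal counterexample is the $2$-graph $K_4$ with the edge $\{1,2\}$ deleted: by Motzkin--Straus $\lambda = 1/3$, the optimum set is the segment $x_3 = x_4 = 1/3$, $x_1 + x_2 = 1/3$, and the interior point $(1/6,1/6,1/3,1/3)$ has all four coordinates positive while the pair $\{1,2\}$ is uncovered. The correct hypothesis, as in Frankl--R\"odl, is that $\vec{x}$ has minimum support among optimum weightings (equivalently, one can use the density formulation in Fact~\ref{fact1}, which is the form the paper actually relies on when it argues that left-compressed dense $\mathcal{F}$ covers every pair). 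With that hypothesis inserted your proof of~(2) is exactly right and the shift computation is correct. As a small polish, rather than ``iterating the weight-transfer move described next''---which describes the same move twice and reads as circular---it is cleaner to fix an optimum weighting of minimum support at the outset (one exists since there are only finitely many subsets of $[n]$) and then run the contradiction once.
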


\begin{fact} {\em (\cite{FR})}\label{fact1}
Let $G=(V,E)$ be a dense $r$-graph. Then every pair  of vertices $i,j\in V$ is covered.
\end{fact}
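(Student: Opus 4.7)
The plan is to deduce Fact \ref{fact1} directly from Fact \ref{fact2}, combined with the equivalent reformulation of denseness stated earlier, namely that an $r$-graph $G$ is dense if and only if every optimum weighting of $G$ lies in the interior of $\Delta$ (i.e.\ has all coordinates strictly positive). Concretely, let $\vec{x}=(x_1,\dots,x_n)$ be an optimum weighting of the dense graph $G$; then $x_i>0$ for every $i\in V$, so in the notation of Fact \ref{fact2} we have $k=n=|V|$. Clause (2) of that fact then asserts that every pair $\{i,j\}\in [k]^{(2)}=V^{(2)}$ is covered, which is exactly the statement of Fact \ref{fact1}.

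If one prefers an argument that does not cite Fact \ref{fact2} and instead exhibits the role of denseness explicitly, one can argue by contradiction. Suppose some pair $\{i,j\}$ lies in no edge of $G$, and let $\vec{x}$ be an optimum weighting; by denseness $x_i,x_j>0$. For $t\in[-x_i,x_j]$ form the feasible weighting $\vec{x}(t)\in\Delta$ obtained from $\vec{x}$ by replacing $x_i$ with $x_i+t$ and $x_j$ with $x_j-t$. Since no edge of $G$ contains both $i$ and $j$, the map $t\mapsto\lambda(G,\vec{x}(t))$ is linear:
$$\lambda(G,\vec{x}(t))=\lambda(G,\vec{x})+t\bigl(L_G(x_i)-L_G(x_j)\bigr).$$
If $L_G(x_i)\neq L_G(x_j)$, a suitable small $t$ strictly increases the Lagrangian, contradicting optimality; if $L_G(x_i)=L_G(x_j)$, then $\vec{x}(-x_i)$ is still an optimum weighting but now has a zero coordinate, contradicting denseness. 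Hence every pair $\{i,j\}$ must be covered.

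I do not expect a real obstacle here, as the result is short and standard. The one point worth highlighting is that the entire content of the denseness hypothesis enters through positivity of the optimum weighting: it is precisely this positivity that promotes Fact \ref{fact2}(2), which a priori only covers pairs supported on nonzero coordinates, into a statement about \emph{all} $\binom{n}{2}$ pairs of $V$.
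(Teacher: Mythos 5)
The paper states Fact~\ref{fact1} as a citation to \cite{FR} and gives no proof of its own, so there is nothing internal to compare against; what you offer is a proof the paper deliberately omits. Both of your arguments are correct. The first route is the shortest: the equivalent characterization of denseness recorded just before Algorithm~\ref{left-compression} forces every optimum weighting to have all $n$ coordinates strictly positive, so $k=n$ in Fact~\ref{fact2}, and clause (2) of that fact yields coverage of every pair $\{i,j\}\in V^{(2)}$. Your second, self-contained argument is also sound and is essentially the standard Motzkin--Straus/Frankl--R\"odl perturbation: when $\{i,j\}$ is uncovered, no edge contains both, so $L_G(x_i)$ and $L_G(x_j)$ are constant along the shift $t\mapsto\vec{x}(t)$ and the Lagrangian is affine in $t$; either the slope is nonzero (contradicting optimality) or the full shift $t=-x_i$ produces an optimum weighting with a zero coordinate (contradicting denseness). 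One small remark: on a dense graph Fact~\ref{fact2}(1) already forces $L_G(x_i)=L_G(x_j)=r\lambda(G)$, so the ``nonzero slope'' branch is vacuous, but keeping both cases makes the argument genuinely independent of Fact~\ref{fact2}, which is presumably what you intended. You also correctly identify the one place the denseness hypothesis is used, namely to promote ``nonzero coordinates'' to ``all coordinates,'' which is the whole content of the claim.
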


\begin{lemma}{\em (\cite{HK})}\label{lemma-CP}
Let $G$ be an $r$-graph on vertex set $[n]$. If the pair $\{i,j\} \subseteq [n]$ is not covered, then $\lambda(G)=\max\{\lambda(G\setminus\{i\}), \lambda(G\setminus\{j\})\}$. Further more, if $L_G(i) \subseteq L_G(j)$ then $\lambda(G)=\lambda(G\setminus\{i\})$.
\end{lemma}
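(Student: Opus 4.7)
The plan is to exploit the fact that the Lagrangian polynomial $\lambda(G,\vec{x})$ becomes linear in each of $x_i$ and $x_j$ separately precisely when the pair $\{i,j\}$ is not covered. First, I would split the edge set of $G$ according to which of $i,j$ an edge contains: since no edge contains both, we have
\begin{equation*}
\lambda(G,\vec{x}) = x_i\,P + x_j\,Q + R,
\end{equation*}
where $P=\sum_{e\in L_G(i)}\prod_{k\in e}x_k$, $Q=\sum_{e\in L_G(j)}\prod_{k\in e}x_k$, and $R$ collects contributions from edges containing neither $i$ nor $j$. Crucially, all of $P,Q,R$ are constant in $(x_i,x_j)$: the ``link'' terms $P$ and $Q$ involve only the variables indexed by vertices in $V(G)\setminus\{i,j\}$, because every element of $L_G(i)$ avoids $j$ (and vice versa) by the non-covered hypothesis.

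Next I would take an optimum weighting $\vec{x}$ of $G$ and restrict attention to variations that keep $x_i+x_j=s$ fixed and leave all other coordinates alone. Under this constraint $\lambda(G,\vec{x})$ becomes an affine function of $x_i$ alone, so its maximum on $[0,s]$ is attained at an endpoint; shifting all the weight to whichever of $\{i,j\}$ has the larger coefficient produces another optimum weighting in which one of $x_i,x_j$ equals zero. A weighting with $x_i=0$ gives $\lambda(G,\vec{x})=\lambda(G\setminus\{i\},\vec{x}')$ for the restricted vector $\vec{x}'\in\Delta$ on $[n]\setminus\{i\}$, so $\lambda(G)\le\lambda(G\setminus\{i\})$; combined with the reverse inequality from Fact~1.2, we obtain $\lambda(G)=\lambda(G\setminus\{i\})$. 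The symmetric argument handles the case $x_j=0$, proving the first claim.

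For the second part, I would observe that the hypothesis $L_G(i)\subseteq L_G(j)$ forces the coefficient inequality $P\le Q$ at \emph{every} nonnegative weighting, since $Q-P=\sum_{e\in L_G(j)\setminus L_G(i)}\prod_{k\in e}x_k\ge 0$. Hence the linear function $x_i\mapsto x_i(P-Q)+sQ+R$ is non-increasing in $x_i$ on $[0,s]$, so its maximum is always attained at $x_i=0$. Consequently there is an optimum weighting with $x_i=0$, yielding $\lambda(G)=\lambda(G\setminus\{i\})$ as required.

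I don't foresee any serious obstacle: the whole argument rests on the single structural observation that the non-covered pair makes $\lambda$ linear in $x_i$ along the slice $x_i+x_j=\text{const}$. The only point that requires a bit of care is checking that the ``link coefficients'' $P$ and $Q$ really contain no occurrences of $x_i$ or $x_j$, and this is immediate from the non-covered hypothesis together with the fact that $L_G(i)$, $L_G(j)\subseteq\binom{V(G)\setminus\{i,j\}}{r-1}$.
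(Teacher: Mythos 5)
Your proof is correct and is the standard ``shift weight along a line segment'' argument for non-covered pairs. Note that the paper itself does not reprove this lemma---it simply cites Hefetz--Keevash \cite{HK}---so there is no in-paper proof to compare against; the argument you give (decompose $\lambda(G,\vec{x}) = x_i P + x_j Q + R$ with $P,Q,R$ free of $x_i,x_j$ because no edge contains both $i$ and $j$, then observe that $\lambda$ is affine in $x_i$ along the slice $x_i + x_j = s$ and hence maximized at an endpoint) is exactly the one used in \cite{HK} and originates with Motzkin--Straus. One small point worth making explicit: once you produce an optimum weighting with $x_i = 0$, you get $\lambda(G) \le \lambda(G\setminus\{i\})$, and together with the subgraph monotonicity $\lambda(G\setminus\{i\}), \lambda(G\setminus\{j\}) \le \lambda(G)$ this yields $\lambda(G) = \lambda(G\setminus\{i\}) = \max\{\lambda(G\setminus\{i\}), \lambda(G\setminus\{j\})\}$ (and symmetrically if $x_j = 0$), which is the precise statement claimed. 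You might also note that the second hypothesis $L_G(i) \subseteq L_G(j)$ already forces $\{i,j\}$ to be uncovered (any $e \in L_G(i)$ containing $j$ could not lie in $L_G(j)$), so the decomposition remains valid there without an extra assumption.
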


Given disjoint sets of vertices $V_1,\ldots, V_s$, denote $\Pi_{i=1}^s V_i=
V_1\times V_2\times\ldots \times V_s=\{(x_1,x_2,\ldots, x_s): \forall i=1,\ldots,s, x_i\in V_i\}$.
We will also use $\Pi_{i=1}^s V_i$ to  denote the set of the corresponding unordered $s$-sets.
Let $F$ be a hypergraph on $[m]$, a {\it blowup} of $F$ is a hypergraph $G$
whose vertex set can be partitioned into $V_1,\ldots, V_m$ such that
$E(G)=\bigcup_{e\in F} \prod_{i\in e} V_i$.

\begin{coro}\label{coroblowup}{\em(\cite{FR})}
Given an $r$-graph $F$. Let $G$ be a blowup of $F$, then $\lambda(G)=\lambda(F)$.
\end{coro}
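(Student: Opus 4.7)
The plan is to prove the equality by establishing the two inequalities $\lambda(G)\ge \lambda(F)$ and $\lambda(G)\le \lambda(F)$ via a direct weight-transfer argument. Let $F$ be on vertex set $[m]$ with partition $V_1,\dots,V_m$ (of sizes $n_i=|V_i|$) witnessing $G$ as a blowup, so $E(G)=\bigcup_{e\in F}\prod_{i\in e}V_i$. The key combinatorial identity is that for a fixed edge $e=\{i_1,\dots,i_r\}\in F$, the distinct ordered tuples in $V_{i_1}\times\cdots\times V_{i_r}$ are in bijection with the $r$-edges of $G$ lying in $\prod_{i\in e}V_i$, because the $V_i$ are pairwise disjoint.

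For $\lambda(G)\ge\lambda(F)$, first take an optimum weighting $\vec{y}=(y_1,\dots,y_m)\in\Delta$ for $F$. Assuming WLOG that every $V_i$ is non-empty (empty parts contribute nothing and can be deleted from $F$ without decreasing $\lambda(F)$, by Fact 1.3), define a weighting on $V(G)$ by $x_v=y_i/n_i$ for $v\in V_i$. Then $\sum_v x_v=\sum_i n_i\cdot(y_i/n_i)=1$, so $\vec{x}\in\Delta$. For each $e=\{i_1,\dots,i_r\}\in F$, the contribution of the blown-up edges $\prod_{i\in e}V_i$ to $\lambda(G,\vec{x})$ equals
\[
\sum_{v_1\in V_{i_1}}\!\!\cdots\!\!\sum_{v_r\in V_{i_r}}\prod_{j=1}^{r}\frac{y_{i_j}}{n_{i_j}}
\;=\;\Bigl(\prod_{j=1}^r n_{i_j}\Bigr)\cdot\prod_{j=1}^r\frac{y_{i_j}}{n_{i_j}}
\;=\;\prod_{i\in e} y_i,
\]
and summing over $e\in F$ yields $\lambda(G,\vec{x})=\lambda(F,\vec{y})=\lambda(F)$, so $\lambda(G)\ge\lambda(F)$.

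For the reverse direction $\lambda(G)\le\lambda(F)$, take an optimum $\vec{x}\in\Delta$ for $G$ and aggregate: set $y_i=\sum_{v\in V_i}x_v$. Clearly $\sum_i y_i=1$, so $\vec{y}\in\Delta$ is a feasible weighting for $F$. Using the disjointness of the parts and expanding products into sums,
\[
\lambda(F,\vec{y})=\sum_{e\in F}\prod_{i\in e}\sum_{v\in V_i}x_v
=\sum_{e\in F}\ \sum_{E\in\prod_{i\in e}V_i}\prod_{v\in E}x_v
=\sum_{E\in E(G)}\prod_{v\in E}x_v=\lambda(G,\vec{x})=\lambda(G),
\]
so $\lambda(F)\ge\lambda(F,\vec{y})=\lambda(G)$. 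Combining both inequalities gives $\lambda(G)=\lambda(F)$.

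There is essentially no hard step here: the whole argument is the observation that the Lagrangian polynomial of a blowup factors, on each edge of $F$, as $\prod_{i\in e}(\sum_{v\in V_i}x_v)$, which is monotone in the aggregated coordinates and attains its maximum when mass is distributed proportionally. The only minor care needed is handling possibly empty parts, which is dispatched by passing to the non-empty sub-blowup.
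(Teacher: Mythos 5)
Your proof is correct and is the standard weight-transfer argument (spread an optimum weighting of $F$ uniformly over each part for one inequality, aggregate an optimum weighting of $G$ over the parts for the other); the paper itself gives no proof, simply citing Frankl--R\"odl, so there is nothing to compare against. One small quibble: your parenthetical justification for discarding empty parts is backwards --- Fact 1.3 says deleting a vertex of $F$ can only \emph{decrease} $\lambda(F)$, which would break the inequality $\lambda(G)\ge\lambda(F)$ rather than save it; the statement is in fact false if empty parts are permitted (a blowup of $K_3$ with one empty class is a blown-up single edge, with Lagrangian $1/4<1/3$), so one must simply read the paper's ``partitioned into $V_1,\dots,V_m$'' as requiring all parts non-empty, after which your main argument goes through unchanged.
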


\begin{lemma}{\em (\cite{HK})}\label{Equivalent}
Let $G$ be an $r$-graph on vertex set $[n]$. If $L(i\setminus j)=L(j\setminus i)$, then there is an optimum weighting $\vec{x}=\{x_1,x_2,\cdots, x_n\}$ such that $x_i=x_j$.
\end{lemma}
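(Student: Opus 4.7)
The plan is to take an arbitrary optimum weighting $\vec{y}$ of $G$ and show that replacing $y_i$ and $y_j$ by their common average $(y_i+y_j)/2$ yields a feasible weighting that is still optimum; this immediately produces the desired optimum weighting with equal weights on $i$ and $j$.

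First I would partition the edge set $E(G)$ according to the intersection with $\{i,j\}$. For each edge $e$, exactly one of the following holds: (i) $\{i,j\}\subseteq e$; (ii) $i\in e$, $j\notin e$; (iii) $j\in e$, $i\notin e$; or (iv) $\{i,j\}\cap e=\emptyset$. Let $C=\{f\in V^{(r-2)}:\{i,j\}\cap f=\emptyset,\ f\cup\{i,j\}\in E(G)\}$, let $A=\{f\in V^{(r-1)}:\{i,j\}\cap f=\emptyset,\ f\cup\{i\}\in E(G)\text{ and }f\cup\{j\}\in E(G)\}$, and let $D$ be the set of edges disjoint from $\{i,j\}$. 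Then the $(r-1)$-sets $f$ giving edges of type (ii) split cleanly into $A$ and $L(i\setminus j)$, and the $(r-1)$-sets giving edges of type (iii) split into $A$ and $L(j\setminus i)$. Using the hypothesis $L(i\setminus j)=L(j\setminus i)$, call this common set $B$, and write
\[
\lambda(G,\vec{x})\;=\;\sum_{e\in D}\prod_{k\in e}x_k \;+\; x_ix_j\!\sum_{f\in C}\prod_{k\in f}x_k \;+\; (x_i+x_j)\!\sum_{f\in A\cup B}\prod_{k\in f}x_k.
\]

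Next I would freeze every coordinate of $\vec{y}$ except $y_i,y_j$ and set $s=y_i+y_j$. The display above shows that on the segment $\{x_i+x_j=s,\ x_i,x_j\ge 0\}$ (with all other coordinates fixed at their values in $\vec{y}$) the Lagrangian equals $P+Q\,x_ix_j$ for nonnegative constants
\[
P=\sum_{e\in D}\prod_{k\in e}y_k+s\!\sum_{f\in A\cup B}\prod_{k\in f}y_k,\qquad Q=\sum_{f\in C}\prod_{k\in f}y_k\ge 0.
\]
Since $x_ix_j$ is maximised on this segment exactly when $x_i=x_j=s/2$, replacing $(y_i,y_j)$ by $(s/2,s/2)$ does not decrease $\lambda(G,\vec y)$. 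The new weighting still lies in $\Delta$ (the sum of coordinates is unchanged and both new coordinates are nonnegative), and by optimality of $\vec{y}$ it must also attain the maximum $\lambda(G)$. This is the required optimum weighting.

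The main thing to be careful about is the bookkeeping on the edge partition, in particular verifying that the coefficient of $x_i+x_j$ really is a common sum after invoking $L(i\setminus j)=L(j\setminus i)$; once that identity is visible the concavity argument is essentially one line, and no additional hypothesis (such as $\{i,j\}$ being uncovered or $i\sim j$) is needed.
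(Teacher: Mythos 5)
Your proof is correct. The paper cites this lemma from Hefetz and Keevash \cite{HK} without reproducing an argument, and the decomposition of $\lambda(G,\vec{x})$ by intersection pattern with $\{i,j\}$ followed by averaging $y_i$ and $y_j$ (noting $x_ix_j$ is maximised at the midpoint of the segment $x_i+x_j=s$ and the coefficient $Q\ge 0$) is precisely the standard symmetrization argument used there; your closing observation that only $L(i\setminus j)=L(j\setminus i)$ is needed, not the stronger $i\sim j$, is also accurate.
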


\begin{lemma} \label{K_5^3-2}
Let $G$ be a $3$-graph obtained by removing two edges intersecting at two vertices from $K_5^3$. Then $\lambda(G)\le 0.0673<{9 \over 128}$.
\end{lemma}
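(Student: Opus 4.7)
The plan is to exploit the symmetries of $G$ to cut the optimisation down to two free variables and then solve the critical-point equations explicitly. Label the vertices so that the two removed edges of $K_5^{(3)}$ are $\{1,2,3\}$ and $\{1,2,4\}$. A direct check of the remaining eight edges gives $L_G(1\setminus 2)=L_G(2\setminus 1)=\emptyset$ and $L_G(3\setminus 4)=L_G(4\setminus 3)=\emptyset$, so Lemma~\ref{Equivalent} provides an optimum weighting with $x_1=x_2=a$, $x_3=x_4=b$, and $x_5=c$, subject to $2a+2b+c=1$. Listing edges gives the closed form
\[
\lambda(G,\vec{x})=a^2c+2ab^2+4abc+b^2c = c(a+b)^2+2ab(b+c).
\]

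If any of $a,b,c$ is zero, the remaining expression reduces to a one-variable maximisation (e.g.\ $b^2c$ subject to $2b+c=1$) whose maximum is $1/27<0.0673$, so I may restrict to the interior. At an interior optimum, Fact~\ref{fact2} forces $L_G(x_1)=L_G(x_3)=L_G(x_5)=3\lambda(G)$, i.e.
\[
ac+b^2+2bc \;=\; 2ab+2ac+bc \;=\; a^2+4ab+b^2.
\]
The second equality gives the clean relation $(2a+b)c=(a+b)^2$, and the first gives $b(2a-b)=c(b-a)$. Eliminating $c$ and using the normalisation $2a+2b+c=1$ collapses the system into the single cubic
\[
23s^3+2s^2-12s+3=0,\qquad s:=a+b,
\]
with a unique feasible root $s_0\approx 0.3689$ in the admissible window $(1/3,2/5)$; the formulae $a=s(3s-1)/(1-2s)$ and $b=s(2-5s)/(1-2s)$ then recover $a$ and $b$ uniquely from $s_0$.

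To close the argument, I substitute these formulae into $3\lambda(G)=a^2+4ab+b^2=s^2+2ab$ to obtain the single-variable expression
\[
3\lambda(G)=\frac{s_0^{\,2}\bigl(-3+18s_0-26s_0^{\,2}\bigr)}{(1-2s_0)^2},
\]
and verify that this value is at most $0.2019$, whence $\lambda(G)\le 0.0673<9/128$. The main obstacle is that the bound is genuinely tight: the actual maximum is $\lambda(G)\approx 0.06728$, only a few parts in $10^5$ below the target $0.0673$. Consequently the interval enclosure for $s_0$ used in the last step has to be narrow (roughly $s_0\in[0.36893,0.36895]$) or else backed up by a monotonicity estimate on the right-hand side in a neighbourhood of $s_0$; coarse rounding will not suffice to close the gap, and this is the one place where care is needed rather than routine computation.
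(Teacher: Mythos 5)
Your proposal is correct, and it takes a genuinely different route from the paper's. Both proofs begin by removing the same pair of edges (up to relabelling), identifying the two pairs of equivalent vertices, and invoking Lemma~\ref{Equivalent} to reduce to a three-parameter symmetric weighting; your parametrisation $x_1=x_2=a$, $x_3=x_4=b$, $x_5=c$ with $2a+2b+c=1$ is just a rescaling of the paper's $x_1=a$, $x_2=x_3=b/2$, $x_4=x_5=c/2$ with $a+b+c=1$. From there the two arguments diverge. The paper keeps the reduction coarse: it writes $\lambda(G,\vec x)={a\over 4}(1-a)^2+{bc\over4}(2a+b)$, applies a weighted AM--GM inequality with the hand-tuned coefficients $2.2$ and $3.2$ to replace $bc(2a+b)$ by a cube of a linear form in $a$, and is left with maximising a single cubic polynomial in $a$. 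This deliberately gives up a little accuracy (the AM--GM equality conditions do not hold exactly at the optimum) in exchange for a one-line reduction to one variable. You instead compute the \emph{exact} interior optimum: you set the partial derivatives $L_G(x_1)=L_G(x_3)=L_G(x_5)=3\lambda(G)$ from Fact~\ref{fact2}, derive the relations $c(2a+b)=(a+b)^2$ and $c(b-a)=b(2a-b)$, eliminate to the cubic $23s^3+2s^2-12s+3=0$ in $s=a+b$ (I verified this expansion), and evaluate $3\lambda(G)=s_0^2(-3+18s_0-26s_0^2)/(1-2s_0)^2$ at the unique feasible root $s_0\in(1/3,2/5)$. Your approach pins down the true maximum rather than an upper envelope, but as you correctly flag, this buys no slack: since $\lambda(G)\approx 0.06728$, the final numerical enclosure of $s_0$ has to be sharp (an interval of width a few $\times 10^{-5}$, together with a monotonicity or Lipschitz bound on $h(s)=s^2(-3+18s-26s^2)/(1-2s)^2$ near $s_0$) to certify $3\lambda(G)\le 0.2019$. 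Two minor points worth tightening if you write this up: (a) state explicitly that \emph{all three} boundary cases $a=0$, $b=0$, $c=0$ reduce to $\max\{a^2c,\,b^2c,\,2ab^2\}\le 1/27$ rather than illustrating with one; and (b) note that the interior critical point is a maximum because the supremum over the simplex is attained, exceeds the boundary value $1/27$, and the interior stationary point is unique (the cubic has exactly one root in $(1/3,2/5)$, the others lying outside the feasible window). With those small additions the argument is complete.
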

\begin{proof}
Without loss of generality, suppose that $G=K_5^3\setminus \{245,345\}$. Let $\vec{x}=(x_1,\dots,x_5)$ be an optimum weighting of $G$. By Lemma \ref{Equivalent}, we can assume that $x_1=a$, $x_2=x_3={b \over 2}$ and $x_4=x_5={c \over 2}$. So $a+b+c=1$. Then
\begin{eqnarray*}
 \lambda(G,\vec{x})&=& {ab^2 \over 4}+abc+{ac^2 \over 4}+{b^2c \over 4} \\
 &=& {a \over 4}(b+c)^2+{1 \over 2}abc + {b^2c \over 4} \\
 &=& {a \over 4}(1-a)^2 + {bc \over 4}(2a+b) \\
 &=& {a \over 4}(1-a)^2 + {2.2b \times3.2c \times (2a+b)  \over 4\times 2.2\times 3.2} \\
 &\le& {a \over 4}(1-a)^2 + {1  \over 4\times 2.2\times 3.2}\left(\frac{2a+3.2b+3.2c}{3}\right)^3 \\
 &=& {a \over 4}(1-a)^2 + {1  \over 4\times 2.2\times 3.2\times 27}\left(3.2-1.2a\right)^3 \\
 &=& {2943a^3-5724a^2+2394a+512 \over 4 \times 27 \times 110}
 .
\end{eqnarray*}
Let $g(a)=2943a^3-5724a^2+2394a+512$. Then $g'(a)=0$ implies $a=\frac{212-5\sqrt{638}}{327}$. So $ \lambda(G,\vec{x})\leq{g(\frac{212-5\sqrt{638}}{327}) \over 4 \times 27 \times 110}< 0.0673.$
\qed
\end{proof}
\medskip

\section{Lagrangians of intersecting 4-graphs}
We first calculate $\lambda(\mathcal {S})$.
\begin{lemma}\label{F_0}
$\lambda(\mathcal {S}_n)= {9(n-2)(n-3) \over 512(n-1)^2}$ for $n\ge 4$ and $\lambda(\mathcal{S})= {9 \over 512}$.
\end{lemma}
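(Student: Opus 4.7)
The plan is to exploit the symmetry of $\mathcal{S}_n$ to reduce the Lagrangian computation to a one-variable optimization. Let vertex $1$ be the fixed vertex contained in every edge, so that $E(\mathcal{S}_n) = \{\{1\}\cup T : T\in\binom{\{2,\ldots,n\}}{3}\}$. For any feasible weighting $\vec{x}\in\Delta$, the Lagrangian factors as
$$\lambda(\mathcal{S}_n,\vec{x}) = x_1 \cdot e_3(x_2,\ldots,x_n),$$
where $e_3$ denotes the third elementary symmetric polynomial in the $n-1$ outer weights.

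Next I would symmetrize the outer weights. Two routes are available. For any pair $\{i,j\}\subseteq\{2,\ldots,n\}$, every edge through $i$ must contain vertex $1$, so replacing $i$ by $j$ in such an edge still yields an edge of $\mathcal{S}_n$; consequently $L(i\setminus j) = L(j\setminus i) = \emptyset$, and applying Lemma \ref{Equivalent} iteratively shows that some optimum weighting has $x_2 = x_3 = \cdots = x_n$. Alternatively (and more directly), Maclaurin's inequality gives
$$e_3(x_2,\ldots,x_n) \le \binom{n-1}{3}\left(\frac{x_2+\cdots+x_n}{n-1}\right)^3 = \binom{n-1}{3}\left(\frac{1-x_1}{n-1}\right)^3,$$
with equality precisely when the outer weights are all equal. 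Substituting yields
$$\lambda(\mathcal{S}_n,\vec{x}) \le \frac{(n-2)(n-3)}{6(n-1)^2}\, x_1(1-x_1)^3.$$

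The remaining step is one-variable calculus. Setting $f(a) = a(1-a)^3$, we have $f'(a)=(1-a)^2(1-4a)$, so the maximum on $[0,1]$ occurs at $a=1/4$ with $f(1/4)=27/256$. Plugging in gives $\lambda(\mathcal{S}_n) = \frac{9(n-2)(n-3)}{512(n-1)^2}$, attained at the feasible weighting $x_1=1/4$ and $x_i = 3/(4(n-1))$ for $i\ge 2$. Letting $n\to\infty$ and noting $(n-2)(n-3)/(n-1)^2 \to 1$ yields $\lambda(\mathcal{S}) = 9/512$. There is no substantive obstacle here; the only step deserving a word of justification is the symmetrization of the outer weights, which Maclaurin's inequality (or iterated application of Lemma \ref{Equivalent}) dispatches cleanly, after which only an elementary derivative computation remains.
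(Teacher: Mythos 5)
Your proof is correct and follows essentially the same route as the paper: factor out $x_1$, bound the elementary symmetric part by the AM--GM (Maclaurin) inequality $\sum_{2\le i<j<k\le n}x_ix_jx_k\le\binom{n-1}{3}\left(\frac{1-x_1}{n-1}\right)^3$, and then optimize the resulting single-variable expression $x_1(1-x_1)^3$ at $x_1=1/4$. The paper compresses the two optimization steps into one displayed chain of inequalities, but the content is identical, including the verification that the bound is attained at the balanced weighting.
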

\begin{proof}
Note that $\mathcal {S}_n=\{1ijk: 2\leq i<j<k\leq n\}$. Let $\vec{x}=(x_1,\dots,x_n)$ be a feasible weighting of $\mathcal{S}_n$, then
$$\lambda(\mathcal {S}_n,\vec{x})=x_1\sum_{2\le i < j < k \le n} x_ix_jx_k\leq x_1{n-1 \choose 3}\left(\frac{1-x_1}{n-1}\right)^3\leq {9(n-2)(n-3) \over 512(n-1)^2},$$
equality holds if and only if $ x_1={1 \over 4}$ and $x_2= \dots = x_n ={3 \over 4(n-1)} $.
So $\lambda(\mathcal{S})=\lim_{n\rightarrow +\infty}\mathcal \lambda(\mathcal {S}_{n},\vec{x})= {9 \over 512}.$
\qed
\end{proof}

\medskip
\noindent{\bf \em Proof of Theorem \ref{theoremM^4_2}}.
Let $\mathcal {F}$ be an $M_{2}^{4}$-free 4-graph on $[n]$. By Lemma \ref{compression-lemma}, we may assume that $\mathcal {F}$ is left-compressed and dense.
If $n \le 7$, then $\mathcal {F} \subseteq K_7^4$. Therefore $\lambda(\mathcal{F})\le \lambda(K_7^4) = {5 \over 343}<0.0169$.
Now assume that $n \ge 8$.
Let $\vec{x}=(x_1,\dots,x_n)$ be an optimum weighting of $\mathcal{F}$, satisfying $x_1\ge x_2 \ge \dots\ge x_n>0$.
The proof classifies such $4$-graphs into several cases and verifies the required bound in each case.

Since $\mathcal{F}$ is left-compressed and dense, then every pair $i,j$ with $3 \le i < j \le n$ satisfies $12ij \in \mathcal{F}$. We claim that $\{1,2\}$ is a vertex cover of $\mathcal{F}$ (i.e., every edge of $\mathcal{F}$ contains 1 or 2), otherwise $3456 \in \mathcal{F}$, then $\{3456 ,1278\}$ forms a copy of $M_2^4$ in $\mathcal{F}$, a contradiction. Furthermore, we have $2468 \notin  \mathcal{F}$, otherwise $\{2468 ,1357\}$ forms a copy of $M_2^4$ in $\mathcal{F}$, a contradiction.

\medskip
Case 1. $2567 \in \mathcal {F}$.

Since $\mathcal{F}$ is $M_{2}^{4}$-free, we have $1348 \notin \mathcal{F}$. Then
$$\mathcal{F} \subseteq \mathcal{F}_1=\{12ij: 3 \le i < j \le n\}\cup \{ijkl:i \in [2], 3 \le j <k < l \le 7\}.$$
\begin{lemma}\label{F_1}
 $\lambda(\mathcal{F}_1)\le {1 \over 108}<0.0169.$
\end{lemma}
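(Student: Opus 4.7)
The plan is to exploit the symmetries of $\mathcal{F}_1$ via Lemma~\ref{Equivalent}, reduce to a single-variable optimization, and conclude with AM-GM and elementary calculus. First, I would verify that any edge of $\mathcal{F}_1$ containing vertex~$1$ but not vertex~$2$ is necessarily a type-2 edge $\{1,j,k,l\}$ with $j,k,l\in\{3,\ldots,7\}$, and in that case $\{2,j,k,l\}$ is also in $\mathcal{F}_1$; hence $L_{\mathcal{F}_1}(1\setminus 2) = L_{\mathcal{F}_1}(2\setminus 1) = \emptyset$. Parallel direct checks show $L_{\mathcal{F}_1}(i\setminus j) = L_{\mathcal{F}_1}(j\setminus i) = \emptyset$ for every pair $\{i,j\}\subseteq\{3,\ldots,7\}$ (swapping preserves both type-1 and type-2 edges) and for every pair $\{i,j\}\subseteq\{8,\ldots,n\}$ (these vertices appear only in type-1 edges with the same link). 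Applying Lemma~\ref{Equivalent} iteratively to these three symmetry classes, there is an optimum weighting with $x_1=x_2=a$, $x_3=\cdots=x_7=c$, and $x_8=\cdots=x_n=d$, subject to $2a+5c+(n-7)d=1$ and $a,c,d\ge 0$.

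Under this symmetric weighting, the Lagrangian becomes
$$
\lambda(\mathcal{F}_1, \vec{x}) = a^2\left[10c^2 + 5(n-7)cd + {n-7 \choose 2}d^2\right] + 20\,ac^3.
$$
Writing $S = x_1+x_2 = 2a$, the type-1 contribution $x_1x_2\cdot e_2(x_3,\ldots,x_n)$ is at most $(S^2/4)\cdot(1-S)^2/2$ by $x_1x_2\le S^2/4$ and $e_2\le (1-S)^2/2$, while the type-2 contribution $(x_1+x_2)\cdot e_3(x_3,\ldots,x_7)$ is at most $2S(1-S)^3/25$ by AM-GM applied to the five weights $x_3,\ldots,x_7$ (which sum to at most $1-S$). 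Combining yields
$$
\lambda(\mathcal{F}_1) \le \frac{S^2(1-S)^2}{8} + \frac{2S(1-S)^3}{25}.
$$
Differentiating the right-hand side and factoring out $(1-S)$ reduces the critical equation to $18S^2 + 15S - 8 = 0$, whose positive root is $S^* = (-15+\sqrt{801})/36$. A direct numerical evaluation shows the maximum of the right-hand side is strictly below $0.015 < 0.0169$, which is enough for the use of this lemma in the proof of Theorem~\ref{theoremM^4_2}.

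The main obstacle I anticipate is the symmetry-collapse step: one must check that Lemma~\ref{Equivalent} truly applies iteratively across the three vertex classes, with extra care for the pair $\{1,2\}$ (which does lie inside common type-1 edges). Once that reduction is in place the remaining optimization is routine. To drive the bound down to the sharper constant stated in the lemma, one would work directly with the three-variable polynomial $\lambda(a,c,d)$: Fact~\ref{fact2} provides the three conditions $L_{\mathcal{F}_1}(x_1) = L_{\mathcal{F}_1}(x_3) = L_{\mathcal{F}_1}(x_8) = 4\lambda$, and a comparison of $L_{\mathcal{F}_1}(x_3) - L_{\mathcal{F}_1}(x_8) = 12ac^2 - a^2(c-d)$ with the remaining optimality condition shows that the maximum is attained on the face $d=0$, reducing to the quadratic $10c^2 - 9c + 1 = 0$ with extremum at $a=(\sqrt{41}-5)/8$, $c=(9-\sqrt{41})/20$, $d=0$.
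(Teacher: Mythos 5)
Your first two paragraphs give a correct proof of $\lambda(\mathcal{F}_1)<0.015<0.0169$ by a route genuinely different from the paper's: you split the Lagrangian into the type-1 contribution $x_1x_2\,e_2(x_3,\dots,x_n)\le \frac{S^2(1-S)^2}{8}$ and the type-2 contribution $(x_1+x_2)\,e_3(x_3,\dots,x_7)\le \frac{2S(1-S)^3}{25}$ and then optimize in $S$, whereas the paper looks at the link of vertex~$8$ and applies Fact~\ref{fact2}(1) to write $4\lambda(\mathcal{F}_1)=L_{\mathcal{F}_1}(x_8)=\lambda(L_{\mathcal{F}_1}(8),\vec{x})\le\lambda(L_{\mathcal{F}_1}(8))=1/27$. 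What your approach buys is robustness: the AM-GM bound makes no assumption about which coordinates of the optimum weighting are positive.

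Your last paragraph, however, is incorrect, and it points directly at a flaw in the paper's own argument. Fact~\ref{fact2}(1) applies only to coordinates with strictly positive weight. As you yourself deduce, the optimum is attained on the face $d=0$ (i.e.\ $x_8=\cdots=x_n=0$), with $a=\frac{\sqrt{41}-5}{8}$ and $c=\frac{9-\sqrt{41}}{20}$. Plugging this into $\lambda(\mathcal{F}_1,\vec{x})=10a^2c^2+20ac^3=10ac^2(a+2c)$ gives $\lambda(\mathcal{F}_1)=\frac{123\sqrt{41}-767}{1600}\approx 0.01286$, which \emph{exceeds} $\frac{1}{108}\approx 0.00926$; even the uniform weighting on $[7]$ already gives $30/2401\approx 0.0125>1/108$. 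So the bound $\lambda(\mathcal{F}_1)\le\frac{1}{108}$ stated in Lemma~\ref{F_1} is false, the paper's identity $L_{\mathcal{F}_1}(x_8)=4\lambda(\mathcal{F}_1)$ is unjustified because $x_8=0$ at the optimum, and there is no ``sharper constant'' for you to drive your bound down to. Stop at $<0.0169$; that is all Theorem~\ref{theoremM^4_2} needs, and your derivation of it is sound.
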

\begin{proof}
Let $\vec{x}=(x_1,\dots,x_n)$ be an optimum weighting of $\mathcal{F}_1$. Note that $L_{\mathcal{F}_1}(8)=\{12i: 3 \le i \le n,i\neq8\}$. By Fact \ref{fact2} (1), we have $\lambda(\mathcal{F}_1)= {1 \over 4}\lambda(L_{\mathcal{F}_1}(8),\vec{x}) \le {1 \over 4}\lambda(L_{\mathcal{F}_1}(8))={1 \over 108}.$
\qed
\end{proof}
\medskip

Case 2. $2567 \notin \mathcal{F}$ and $2467 \in \mathcal{F}$.

In this case, we have $1358 \notin \mathcal{F}$. Then
$$\mathcal{F} \subseteq \mathcal{F}_{2}=\{12ij, 134k, 13lm, 14lm, 1567, 234k, 23lm, 24lm: 3 \le i < j \le n, 5 \le k \le n, 5 \le l < m \le 7\}.$$
\begin{lemma}\label{F_{21}}
$\lambda(\mathcal{F}_2)\le {1 \over 64}<0.0169$.
\end{lemma}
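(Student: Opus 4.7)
My plan parallels the proof of Lemma~\ref{F_1}: I will compute the link of the peripheral vertex~$8$ in $\mathcal{F}_2$ and bound its Lagrangian, then combine this with Fact~\ref{fact2}(1) to deduce $\lambda(\mathcal{F}_2)\le 1/64$. By inspecting each of the edge-types defining $\mathcal{F}_2$, the only edges containing vertex~$8$ are $\{1,2,8,k\}$ for $k\in\{3,\dots,n\}\setminus\{8\}$, together with $\{1,3,4,8\}$ and $\{2,3,4,8\}$; deleting $8$ gives
$$L_{\mathcal{F}_2}(8)=\{12i:3\le i\le n,\ i\ne 8\}\cup\{134,\,234\},$$
a $3$-graph that is a ``star'' through the pair $\{1,2\}$ enlarged by the two triples $134$ and $234$.

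The main step is to show $\lambda(L_{\mathcal{F}_2}(8))\le 1/16$. One checks directly in this link that $L(1\setminus 2)=L(2\setminus 1)=\emptyset$ and $L(3\setminus 4)=L(4\setminus 3)=\emptyset$, while each vertex in $\{5,6,7,9,\dots,n\}$ lies in exactly one triple $\{1,2,k\}$, so the Lagrangian is linear in each such weight and depends only on their total. Applying Lemma~\ref{Equivalent} to assume $y_1=y_2=a$, $y_3=y_4=d$, and letting $c$ denote the total weight of the remaining vertices (so $2a+2d+c=1$), the Lagrangian becomes
$$\lambda(L_{\mathcal{F}_2}(8),\vec{y})=a^2(2d+c)+2ad^2=a^2(1-2a)+2ad^2.$$
For fixed $a$ this is increasing in $d$, so the maximum occurs at $c=0$ and $d=(1-2a)/2$, reducing to $a(1-2a)/2$; maximising over $a\in[0,1/2]$ gives the value $1/16$ at $a=1/4$, corresponding to the induced $K_4^3$ on $\{1,2,3,4\}$.

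To conclude, let $\vec{x}$ be an optimum weighting of $\mathcal{F}_2$. Since vertices $8,9,\dots,n$ are pairwise equivalent in $\mathcal{F}_2$, by Lemma~\ref{Equivalent} we may assume $x_8=x_9=\cdots=x_n$. If this common value is zero then $\lambda(\mathcal{F}_2)=\lambda(\mathcal{F}_2|_{[7]})\le\lambda(K_7^4)=5/343<1/64$; otherwise $x_8>0$ and Fact~\ref{fact2}(1) gives $4\lambda(\mathcal{F}_2)=L_{\mathcal{F}_2}(x_8)=\lambda(L_{\mathcal{F}_2}(8),\vec{x})\le\lambda(L_{\mathcal{F}_2}(8))\le 1/16$, so in either case $\lambda(\mathcal{F}_2)\le 1/64<0.0169$. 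The main obstacle is the optimisation that controls $\lambda(L_{\mathcal{F}_2}(8))$, but this simplifies dramatically once the vertex equivalences are exploited; the remaining verifications (the equivalence checks and the boundary estimate via $K_7^4$) are routine.
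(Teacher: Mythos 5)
Your proof is correct and reaches the same conclusion through the same overall strategy (compute $L_{\mathcal{F}_2}(8)$, bound its Lagrangian by $1/16$, and apply Fact~\ref{fact2}(1)), but the way you bound $\lambda(L_{\mathcal{F}_2}(8))$ differs from the paper. The paper observes that the pair $\{4,5\}$ is uncovered in the link and that $L(5)\subseteq L(4)$, so repeated application of Lemma~\ref{lemma-CP} prunes away all vertices $\ge 5$, leaving exactly $K_4^3$ on $\{1,2,3,4\}$, whose Lagrangian is $1/16$. You instead invoke Lemma~\ref{Equivalent} on the equivalent pairs $(1,2)$ and $(3,4)$ and exploit the linearity in the remaining weights to reduce to a one-variable optimization, which also produces $1/16$ (attained at the same $K_4^3$ weighting). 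Both routes are valid; the paper's is structurally shorter because Lemma~\ref{lemma-CP} immediately collapses the link to a known clique, whereas your route is slightly more computational but also dispenses with needing Theorem~\ref{PZ}-type results. A small bonus of your write-up is that you explicitly handle the degenerate case $x_8=0$ (falling back on $\lambda(K_7^4)=5/343$), a point the paper glosses over when it invokes Fact~\ref{fact2}(1), which technically requires $x_8>0$.
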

\begin{proof}
Let $\vec{x}=(x_1,\dots,x_n)$ be an optimum weighting of $\mathcal{F}_2$. Note that $L_{\mathcal{F}_2}(8)=\{12i,134,234: 3 \le i \le n,i\neq8\}$. Since the pair of vertices $\{4,5\}$ is not covered in $L_{\mathcal{F}_2}(8)$, we can assume that $L_{\mathcal{F}_2}(8)$ is on $[4]$ by Lemma \ref{lemma-CP}. Moreover, $L_{\mathcal{F}_2}(8)[[4]]=\{123,124,134,234\}$.
By Fact \ref{fact2} (1), we have $\lambda(\mathcal{F}_2)= {1 \over 4}\lambda(L_{\mathcal{F}_2}(8),\vec{x}) \le {1 \over 4} \lambda(L_{\mathcal{F}_2}(8))={1 \over 4} \lambda(K_4^3)={1 \over 64}.$
\qed
\end{proof}
\medskip

Case 3. $2467 \notin \mathcal{F}$ and $2368 \in \mathcal{F}$.

In this case, we have $1457 \notin \mathcal{F}$. Then
\[
\begin{split}
\mathcal{F} \subseteq \mathcal{F}_{3}= \{12i_1j_1, 13i_2j_2,1456, 23i_3j_3, 2456: 3 \le i_1 < j_1\le n,4 \le i_2 < j_2\le n, 4 \le i_3 < j_3\le n,\},
\end{split}
\]

\begin{lemma}\label{F_{22}}
$\lambda(\mathcal{F}_3)\le {1 \over 64}<0.0169$.
\end{lemma}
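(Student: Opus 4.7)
The plan is to mirror the method used for $\mathcal{F}_1$ and $\mathcal{F}_2$: isolate the link of a carefully chosen vertex, reduce it to a small ``canonical'' 3-graph by discarding non-covered pairs, and then apply Fact~\ref{fact2}(1) to convert a Lagrangian bound on the link into one on $\mathcal{F}_3$.

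First I would compute $L_{\mathcal{F}_3}(8)$. The edges $1456$ and $2456$ do not contain $8$, so they contribute nothing. The remaining edges contribute the triples
\[
\{12i : 3\le i\le n,\ i\neq 8\} \;\cup\; \{13i : 4\le i\le n,\ i\neq 8\} \;\cup\; \{23i : 4\le i\le n,\ i\neq 8\}.
\]
Every triple of $L_{\mathcal{F}_3}(8)$ contains at least two vertices from $\{1,2,3\}$, so for any two vertices $i,j\in\{4,5,6,7,9,\dots,n\}\setminus\{8\}$ the pair $\{i,j\}$ is not covered in $L_{\mathcal{F}_3}(8)$. Moreover, for any two such vertices $i$ and $j$ one has the identity $L_{L_{\mathcal{F}_3}(8)}(i)=L_{L_{\mathcal{F}_3}(8)}(j)\subseteq\{12,13,23\}$ (with equality whenever both $i,j\ge 4$). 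Hence Lemma~\ref{lemma-CP} lets me delete the vertices $5,6,7,9,\dots,n$ one by one without decreasing the Lagrangian.

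What remains is $L_{\mathcal{F}_3}(8)$ restricted to $\{1,2,3,4\}$, which consists exactly of the four triples $\{1,2,3\},\{1,2,4\},\{1,3,4\},\{2,3,4\}$ -- that is, $K_4^{(3)}$. Therefore $\lambda(L_{\mathcal{F}_3}(8))\le \lambda(K_4^{(3)})=\tfrac{1}{16}$. Finally, invoking Fact~\ref{fact2}(1) with the optimum weighting $\vec{x}$ of $\mathcal{F}_3$ gives $\lambda(L_{\mathcal{F}_3}(8),\vec{x})=4\lambda(\mathcal{F}_3)$, and hence
\[
\lambda(\mathcal{F}_3)=\tfrac14\lambda(L_{\mathcal{F}_3}(8),\vec{x})\le \tfrac14\lambda(L_{\mathcal{F}_3}(8))\le \tfrac14\cdot\tfrac{1}{16}=\tfrac{1}{64},
\]
which is the desired bound.

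There is essentially no obstacle here; the only point requiring care is the bookkeeping that shows all ``high-index'' vertices of $L_{\mathcal{F}_3}(8)$ are pairwise non-covered with identical links, so that Lemma~\ref{lemma-CP} can be applied repeatedly until only $K_4^{(3)}$ remains. Once that reduction is justified, the bound is immediate from Fact~\ref{fact2}(1) and the known value $\lambda(K_4^{(3)})=1/16$.
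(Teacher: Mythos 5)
Your proof is correct and follows essentially the same route as the paper: compute $L_{\mathcal{F}_3}(8)$, observe that the high-index vertices are pairwise non-covered with identical links so that Lemma~\ref{lemma-CP} reduces the link to $K_4^{(3)}$ on $\{1,2,3,4\}$, and then convert via Fact~\ref{fact2}(1). The paper states the reduction more tersely (it just notes $\{4,5\}$ is uncovered and cites Lemma~\ref{lemma-CP}), while you spell out the bookkeeping, but the argument is the same.
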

\begin{proof}
Let $\vec{x}=(x_1,\dots,x_n)$ be an optimum weighting of $\mathcal{F}_3$.  Note that $$L_{\mathcal{F}_3}(8)=\{12i,13j,23k: 3\le i \le n,4 \le j,k \le n,i,j,k\neq8\}.$$
Since the pair of vertices $\{4,5\}$ is not covered in $L_{\mathcal{F}_3}(8)$. By Fact \ref{fact2} (1) and Lemma \ref{lemma-CP}, we have $\lambda(\mathcal{F}_3)= {1 \over 4}\lambda(L_{\mathcal{F}_3}(8),\vec{x}) \le {1 \over 4} \lambda(L_{\mathcal{F}_3}(8))={1 \over 4} \lambda(K_4^3)={1 \over 64}.$
\qed
\end{proof}
\medskip

Case 4. $2467, 2368 \notin \mathcal{F}$ and $2458 \in \mathcal{F}$.

In this case, we have $1367 \notin \mathcal{F}$. Then
$$\mathcal{F} \subseteq \mathcal{F}_{4}=\{12ij,134k,135l,145l,234k,235l,245l:3 \le i<j \le n, 5 \le k \le n, 6 \le l \le n\}.$$
\begin{lemma} \label{F_{23}}
$\lambda(\mathcal{F}_4)\le {4\over 243}<0.0169$.
\end{lemma}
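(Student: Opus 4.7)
The plan is to use the fact that every edge of $\mathcal{F}_4$ contains a vertex of $\{1,2\}$, together with the structure of the induced 3-uniform link on $B=\{3,\ldots,n\}$. I will decompose the Lagrangian by $|e\cap\{1,2\}|$: Type I edges contribute $x_1 x_2 \sum_{3 \le i < j \le n} x_i x_j$, while Types II and III contribute $(x_1+x_2)\,\lambda(H,\vec{x}|_B)$, where $H$ is the 3-graph on $B$ with edges $\{34k : k \ge 5\}\cup\{35l, 45l : l \ge 6\}$. Thus
\[
\lambda(\mathcal{F}_4,\vec{x}) \;=\; x_1 x_2 \sum_{3 \le i < j \le n} x_i x_j + (x_1+x_2)\,\lambda(H,\vec{x}|_B).
\]

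Next, I will compute $\lambda(H)=1/16$. The edges of $H$ are precisely the 3-subsets of $B$ meeting $\{3,4,5\}$ in at least two vertices, so $H$ is invariant under $S_{\{3,4,5\}}\times S_{\{6,\ldots,n\}}$. A direct check shows $L_H(i\setminus j)=L_H(j\setminus i)=\emptyset$ for any two vertices in the same orbit, so by Lemma~\ref{Equivalent} an optimum weighting has $y_3=y_4=y_5=\alpha$ and equal weight on $\{6,\ldots,n\}$ summing to $1-3\alpha$. Substituting into the explicit formula for $\lambda(H,\vec{y})$,
\[
\lambda(H,\vec{y}) \;=\; \alpha^2(1-2\alpha)+2\alpha^2(1-3\alpha) \;=\; \alpha^2(3-8\alpha),
\]
which is maximized at $\alpha=1/4$, giving $\lambda(H)=1/16$.

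Applying the elementary bounds $x_1 x_2 \le (x_1+x_2)^2/4$, $\sum_{3\le i<j\le n}x_i x_j \le (1-x_1-x_2)^2/2$, and $\lambda(H,\vec{x}|_B)\le (1-x_1-x_2)^3\lambda(H)=(1-x_1-x_2)^3/16$ (by homogeneity of the Lagrangian function), and writing $s=x_1+x_2$,
\[
\lambda(\mathcal{F}_4) \;\le\; \frac{s^2(1-s)^2}{8}+\frac{s(1-s)^3}{16} \;=\; \frac{s(1+s)(1-s)^2}{16}.
\]
Finally I will verify $g(s):=s(1+s)(1-s)^2\le 64/243$ on $[0,1]$. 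A direct differentiation gives $g'(s)=(s-1)(4s^2+s-1)$, so the unique interior critical point is $s_*=(\sqrt{17}-1)/8$, and using the relation $4s_*^2=1-s_*$ the value simplifies to $g(s_*)=(51\sqrt{17}-107)/512$; the desired inequality $g(s_*)\le 64/243$ reduces to $12393\sqrt{17}\le 58769$, which follows from the rough estimate $\sqrt{17}<4.13$. I expect this final numerical verification to be the only tedious piece; the structural heart of the proof is the decomposition along the vertex cover $\{1,2\}$ and the symmetry-based evaluation of $\lambda(H)$.
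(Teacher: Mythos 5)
Your proof is correct but takes a genuinely different route from the paper's. The paper fixes an optimum weighting $\vec{x}$ of $\mathcal{F}_4$, applies Lemma~\ref{Equivalent} to reduce to the symmetric form $x_1=x_2=a$, $x_3=x_4=x_5=b$, $\sum_{k\ge 6}x_k=c$, then uses Fact~\ref{fact2}(1) to write $4\lambda(\mathcal{F}_4)=\lambda(L_{\mathcal{F}_4}(1),\vec{x})$; the link $L(1)$ is bounded by $\frac{a(1-2a)^2}{2}+b^3+3b^2c$ and two successive AM--GM steps reduce this to a single-variable polynomial in $a$ whose maximum is exactly $\frac{16}{243}$ (achieved at $a=1/18$), giving $\lambda(\mathcal{F}_4)\le\frac{4}{243}$. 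You instead decompose $\lambda(\mathcal{F}_4,\vec{x})$ according to $|e\cap\{1,2\}|$, isolate the 3-graph $H$ (3-sets of $B$ meeting $\{3,4,5\}$ in at least two vertices), compute $\lambda(H)=1/16$ separately via the $S_3\times S_{|B|-3}$ symmetry and a one-variable calculation, and then reduce the whole estimate to maximizing $g(s)=s(1+s)(1-s)^2$ over $s=x_1+x_2\in[0,1]$. Both are valid; yours has the advantage of being modular (it evaluates the 3-graph piece once and for all) and in fact yields the slightly sharper intermediate bound $\lambda(\mathcal{F}_4)\le \frac{51\sqrt{17}-107}{8192}\approx 0.0126 < \frac{4}{243}\approx 0.0165$, whereas the paper's chain of AM--GMs happens to be tight at $a=1/18$ and returns exactly $\frac{4}{243}$. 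One small remark: the step where you invoke Lemma~\ref{Equivalent} to get $y_3=y_4=y_5$ and all of $y_6,\dots,y_n$ equal simultaneously requires iterating the lemma (replace any pair in the same orbit by their average, which preserves optimality, and pass to a limit or use a potential such as $\sum(y_i-y_j)^2$); this is standard and the paper itself uses it implicitly (e.g.\ in Lemma~\ref{K_5^3-2}), but it is worth noting that the lemma as stated gives one equality at a time.
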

\begin{proof}
Let $\vec{x}=(x_1,\dots,x_n)$ be an optimum weighting of $\mathcal{F}_4$. By Lemma \ref{Equivalent}, we can assume that $x_1=x_2=a$, $x_3=x_4=x_5=b$ and $x_6+ \dots + x_n=c$. So $2a+3b+c=1$. Note that $L_{\mathcal{F}_4}(1)=\{ 2ij,34k,35l,45l:3\le i < j \le n, 5 \le k \le n, 6 \le l \le n \}$. Then
\begin{eqnarray*}
 \lambda(L_{\mathcal{F}_4}(1),\vec{x})&\leq& a{(1-2a)^2 \over 2}+b^3+3b^2c \\
 &=& {a(1-2a)^2 \over 2}+b^2(b+3c)\\
  &=& {a(1-2a)^2 \over 2}+\frac{1}{16}\times4b\times 4b\times(b+3c)\\
 &\le& {a(1-2a)^2 \over 2}+\frac{1}{16}(3b+c)^3.
\end{eqnarray*}
Since $3b+c=1-2a$, we have
\begin{eqnarray*}
 \lambda(L_{\mathcal{F}_4}(1),\vec{x})
 &\le& {a(1-2a)^2 \over 2}+{(1-2a)^3 \over 16}\\
 &=& {(1-2a)^2 \over 16}(1+6a)  \\
 &=& \frac{3}{2}\times {(1-2a)^2 \over 16}(\frac{2}{3}+4a) \\
 &\le&\frac{3}{2}\times\frac{1}{16}\times(\frac{8}{9})^3\\
 &=& {16 \over 243}.
\end{eqnarray*}
Hence $ \lambda(\mathcal{F}_4)={1 \over 4}\lambda(L_{\mathcal{F}_4}(1),\vec{x})\le {4 \over 243}$.
\qed
\end{proof}
\medskip

Case 5. $2467, 2368, 2458 \notin \mathcal{F}$, $2367 \in \mathcal{F}$.

In this case, we have $1458 \notin \mathcal{F}$. Then
\[
\begin{split}
\mathcal{F} \subseteq \mathcal{F}_{5}= \{12ij, 13kl,1456, 1457, 1567, 2345,  234m, 235m, 2367, 2456, 2457:\\ 3 \le i < j \le n, 4\le k < l \le n, 6 \le m \le n\}.
\end{split}
\]
\begin{lemma}\label{F_{31}}
$\lambda(\mathcal{F}_5)\le {1 \over 64}<0.0169$.
\end{lemma}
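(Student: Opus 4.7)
The plan is to follow the same template used in the proofs of Lemmas~\ref{F_1}--\ref{F_{23}}: pass from $\mathcal{F}_5$ to the $3$-graph $L_{\mathcal{F}_5}(8)$ via Fact~\ref{fact2}(1), and then apply Lemma~\ref{lemma-CP} repeatedly to peel off vertices until only $K_4^3$ is left, whose Lagrangian is $\frac{1}{16}$, giving the desired bound $\lambda(\mathcal{F}_5)\le\frac{1}{4}\cdot\frac{1}{16}=\frac{1}{64}$.

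First I would read the link $L_{\mathcal{F}_5}(8)$ directly off the defining list of $\mathcal{F}_5$. The edges of $\mathcal{F}_5$ that contain vertex~$8$ are precisely $\{1,2,i,8\}$ for $i\in\{3,\dots,n\}\setminus\{8\}$, $\{1,3,k,8\}$ for $k\in\{4,\dots,n\}\setminus\{8\}$, and $\{2,3,4,8\},\{2,3,5,8\}$, so
\[
H:=L_{\mathcal{F}_5}(8)=\{12i:\,3\le i\le n,\,i\ne 8\}\cup\{13k:\,4\le k\le n,\,k\ne 8\}\cup\{234,235\}.
\]
For an optimum weighting $\vec{x}$ of $\mathcal{F}_5$, Fact~\ref{fact2}(1) gives $\lambda(\mathcal{F}_5)=\frac{1}{4}\lambda(H,\vec{x})\le\frac{1}{4}\lambda(H)$, so it is enough to show $\lambda(H)\le\frac{1}{16}$.

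The main step is the iterative reduction of $H$ to $K_4^3$. For each vertex $v\in\{6,7,9,10,\dots,n\}$ a direct inspection of $H$ yields $L_H(v)=\{12,13\}$ and shows that the pair $\{4,v\}$ is contained in no edge of $H$. Since $L_H(v)=\{12,13\}\subseteq\{12,13,23\}=L_H(4)$, Lemma~\ref{lemma-CP} gives $\lambda(H)=\lambda(H\setminus\{v\})$. Because removing a vertex can only shrink the set of covered pairs and leaves $L_H(4)$ and each $L_H(v')$ unchanged, this step can be iterated over all $v\in\{6,7,9,\dots,n\}$, reducing $H$ to the $3$-graph
\[
H'=\{123,124,125,134,135,234,235\}
\]
on $\{1,2,3,4,5\}$ with the same Lagrangian. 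A final application of Lemma~\ref{lemma-CP} uses the facts that $\{4,5\}$ is uncovered in $H'$ and that $L_{H'}(4)=L_{H'}(5)=\{12,13,23\}$ to delete vertex~$5$, leaving $K_4^3$ on $\{1,2,3,4\}$. Hence $\lambda(H)\le\lambda(K_4^3)=\frac{1}{16}$ and $\lambda(\mathcal{F}_5)\le\frac{1}{64}<0.0169$.

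I do not anticipate a substantive obstacle; the only delicate point is the bookkeeping of which pair of vertices to feed into Lemma~\ref{lemma-CP} at each iterate, which is immediate from the explicit description of $\mathcal{F}_5$.
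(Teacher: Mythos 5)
Your proposal is correct and follows essentially the same approach as the paper: compute the link $L_{\mathcal{F}_5}(8) = \{12i, 13j, 234, 235\}$, pass to it via Fact~\ref{fact2}(1), and reduce it to $K_4^3$ using Lemma~\ref{lemma-CP}. You simply spell out the iterative deletion of vertices $\{6,7,9,\dots,n\}$ and then vertex~$5$ that the paper leaves implicit in its one-line citation of Lemma~\ref{lemma-CP}, and the bookkeeping you give is accurate.
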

\begin{proof}
Let $\vec{x}=(x_1,\dots,x_n)$ be an optimum weighting of $\mathcal{F}_5$. Note that $$L_{\mathcal{F}_5}(8)=\{12i,13j,234,235:  3 \le i \le n,4 \le j \le n,i,j\neq8\}.$$
By Fact \ref{fact2} (1) and Lemma \ref{lemma-CP}, we have $\lambda(\mathcal{F}_5)= {1 \over 4}\lambda(L_{\mathcal{F}_5}(8),\vec{x}) \le {1 \over 4} \lambda(L_{\mathcal{F}_5}(8))={1 \over 4} \lambda(K_4^3)={1 \over 64}.$
\qed
\end{proof}
\medskip

Case 6. $2458, 2367 \notin \mathcal{F}$ and $2457 \in \mathcal{F}$.

In this case, we have $1368 \notin \mathcal{F}$. Then
\[
\begin{split}
\mathcal {F}\subseteq\mathcal{F}_{6}= \{12ij, 134k, 135l,1367, 145l,1467, 1567, 234k, 235l,  2456, 2457:\\ 3 \le i < j \le n, 5 \le k \le n, 6 \le l \le n\}.
\end{split}
\]
\begin{lemma}\label{F_{32}}
$\lambda(\mathcal{F}_6)< 0.0169$.
\end{lemma}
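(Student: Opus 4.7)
The plan is to follow the same template as Lemmas \ref{F_1}--\ref{F_{31}}: pass to the link of vertex $8$ in $\mathcal{F}_6$ and apply Fact \ref{fact2} (1) to reduce the problem to estimating the Lagrangian of a 3-graph. First I would enumerate the edges of $\mathcal{F}_6$ containing $8$. The family $12ij$ contributes all triples $\{1,2,i\}$ with $3 \le i \le n$, $i \ne 8$; the edges $1348, 1358, 1458, 2348, 2358$ contribute the five triples $\{134, 135, 145, 234, 235\}$; the remaining edges $1367, 1467, 1567, 2456, 2457$ do not meet $8$. Therefore
$$L_{\mathcal{F}_6}(8) = \{12i : 3 \le i \le n,\ i \ne 8\} \cup \{134, 135, 145, 234, 235\}.$$

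The key observation is that the restriction of $L_{\mathcal{F}_6}(8)$ to the five vertices $\{1,2,3,4,5\}$ is precisely the 3-graph $\{123, 124, 125, 134, 135, 145, 234, 235\} = K_5^3 \setminus \{245, 345\}$, which is exactly the hypergraph covered by Lemma \ref{K_5^3-2}. To formally discard the ``extra'' vertices $v \in \{6, 7, 9, \ldots, n\}$, I would apply Lemma \ref{lemma-CP} repeatedly: in $L_{\mathcal{F}_6}(8)$ each such $v$ has link equal to the single pair $\{\{1,2\}\}$, which is contained in the link $\{\{1,2\}, \{1,4\}, \{1,5\}, \{2,4\}, \{2,5\}\}$ of vertex $3$, and the pair $\{v,3\}$ is not covered since no 3-edge contains both. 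Hence each extra vertex can be deleted without changing the Lagrangian, yielding $\lambda(L_{\mathcal{F}_6}(8)) = \lambda(K_5^3 \setminus \{245, 345\})$.

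Combining this reduction with the bound $\lambda(K_5^3 \setminus \{245, 345\}) \le 0.0673$ from Lemma \ref{K_5^3-2}, and then invoking Fact \ref{fact2} (1) as in the earlier cases, I would conclude
$$\lambda(\mathcal{F}_6) \le \frac{1}{4}\, \lambda(L_{\mathcal{F}_6}(8)) \le \frac{0.0673}{4} = 0.016825 < 0.0169.$$
The main subtlety is that, unlike Cases 2, 3, and 5 in which the reduced link was $K_4^3$ with Lagrangian $\tfrac{1}{16}$, here the reduced link is the strictly larger and less symmetric 3-graph $K_5^3 \setminus \{245, 345\}$, whose Lagrangian has no standard closed form; Lemma \ref{K_5^3-2} is designed precisely for this purpose, and its numerical bound $0.0673$ gives just enough room to fit under the target $0.0169$.
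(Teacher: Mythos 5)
Your proposal is correct and follows essentially the same route as the paper: take the link of vertex $8$, observe that it restricts (after removing the uncovered extra vertices via Lemma \ref{lemma-CP}) to $K_5^3 \setminus \{245,345\}$, and combine Fact \ref{fact2}(1) with Lemma \ref{K_5^3-2}. You spell out the repeated application of Lemma \ref{lemma-CP} in slightly more detail than the paper (which just notes the uncovered pair $\{5,6\}$ and leaves the rest implicit), but the substance is identical.
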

\begin{proof}
Let $\vec{x}=(x_1,\dots,x_n)$ be an optimum weighting of $\mathcal{F}_6$. Note that $$L_{\mathcal{F}_6}(8)=\{12i,134,135,145,234,235: 3 \le i \le n, i\neq8\}.$$
Since the pair of vertices $\{5,6\}$ is not covered in $L_{\mathcal{F}_6}(8)$, we have
$$\lambda(\mathcal{F}_6)= {1 \over 4}\lambda(L_{\mathcal{F}_6}(8),\vec{x}) \leq{1 \over 4}\lambda(L_{\mathcal{F}_6}(8))=\frac{1}{4}\lambda(K_{5}^{(3)}\backslash \{245,345\})< 0.0169, $$
according to Fact \ref{fact2} (1), Lemma \ref{lemma-CP} and Lemma \ref{K_5^3-2}.
\qed
\end{proof}
\medskip

Case 7. $2457, 2367 \notin \mathcal{F}$ and $2358 \in \mathcal{F}$.

 In this case, we have $1467 \notin \mathcal{F}$. Then
$$\mathcal{F} \subseteq \mathcal{F}_{7}=\{12ij, 13kl, 145m,2345, 234m, 235m, 2456 :3 \le i < j \le n, 4 \le k < l \le n, 6 \le m \le n\}.$$
\begin{lemma}\label{F_{33}}
$\lambda(\mathcal{F}_7)< 0.0169$.
\end{lemma}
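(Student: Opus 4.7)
The plan is to mimic the pattern of Lemmas \ref{F_{31}} and \ref{F_{32}}: compute the link $L_{\mathcal{F}_7}(8)$, repeatedly apply Lemma \ref{lemma-CP} to strip away vertices whose link is dominated by that of another (non-adjacent) vertex, and finally match the reduced $3$-graph against Lemma \ref{K_5^3-2}.

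Collecting the edges of $\mathcal{F}_7$ that contain vertex $8$ yields the $3$-graph
$$L_{\mathcal{F}_7}(8) = \{12i : 3 \le i \le n,\ i \neq 8\} \cup \{13k : 4 \le k \le n,\ k \neq 8\} \cup \{145,\ 234,\ 235\}.$$
Inside this $3$-graph, the link of vertex $4$ is $\{12,\,13,\,15,\,23\}$, whereas every vertex $v \in \{6,7,9,10,\ldots,n\}\setminus\{8\}$ appears only in the two triples $12v$ and $13v$, so its link is $\{12,13\}\subseteq L(4)$, and the pair $\{v,4\}$ is uncovered. The stronger part of Lemma \ref{lemma-CP} then permits deleting such a vertex $v$ without decreasing the Lagrangian. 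Because each deletion removes only triples of the form $12v$ or $13v$, it leaves $L(4)$ and the link of every remaining vertex $v'$ in the set unchanged, so the containment $L(v')\subseteq L(4)$ persists and the deletion iterates through the whole set $\{6,7,9,\ldots,n\}\setminus\{8\}$.

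What survives is the $3$-graph on $\{1,2,3,4,5\}$ with edge set $\{123,124,125,134,135,145,234,235\}$, which is precisely $K_5^3 \setminus \{245,345\}$. Combining Fact \ref{fact2}(1) with Lemma \ref{K_5^3-2} then gives
$$\lambda(\mathcal{F}_7) \;=\; \tfrac{1}{4}\lambda(L_{\mathcal{F}_7}(8),\vec{x}) \;\le\; \tfrac{1}{4}\lambda\bigl(K_5^3\setminus\{245,345\}\bigr) \;\le\; \tfrac{0.0673}{4} \;<\; 0.0169.$$
The main obstacle here is purely numerical: one has $0.0673/4 = 0.016825$, barely below the target $0.0169$, which is exactly why the authors calibrate the threshold in Lemma \ref{K_5^3-2} so carefully and why $0.0169$ (rather than, say, $1/64$) is the bound used throughout Section 3. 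Everything else reduces to routine bookkeeping.
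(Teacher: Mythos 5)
Your proposal is correct and follows essentially the same route as the paper: compute $L_{\mathcal{F}_7}(8)$, use Lemma~\ref{lemma-CP} on an uncovered pair (you use $\{4,v\}$ where the paper points to $\{5,6\}$, but both links dominate the tail vertices equally well) to strip the link down to $K_5^3\setminus\{245,345\}$, and finish with Fact~\ref{fact2}(1) and Lemma~\ref{K_5^3-2}. The only differences are cosmetic — you spell out the iterated deletion and identify the surviving $3$-graph explicitly, whereas the paper records only the final equality.
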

\begin{proof}
Let $\vec{x}=(x_1,\dots,x_n)$ be an optimum weighting of $\mathcal{F}_7$. Note that $$L_{\mathcal{F}_7}(8)=\{12i,13j,145,234,235: 3 \le i \le n,4 \le j \le n,i,j\neq8 \}.$$
Since the pair of vertices $\{5,6\}$ is not covered in $L_{\mathcal{F}_7}(8)$, we have
$$\lambda(\mathcal{F}_7)= {1 \over 4}\lambda(L_{\mathcal{F}_7}(8),\vec{x}) \leq{1 \over 4}\lambda(L_{\mathcal{F}_7}(8))=\frac{1}{4}\lambda(K_{5}^{(3)}\backslash \{245,345\})< 0.0169, $$
according to Fact \ref{fact2} (1), Lemma \ref{lemma-CP} and Lemma \ref{K_5^3-2}.
\qed
\end{proof}
\medskip

Case 8. $2457, 2367, 2358 \notin \mathcal{F}$, and $2357 \in \mathcal{F}$.

In this case, we have $1468 \notin \mathcal{F}$. Then
\[
\begin{split}
 \mathcal{F} \subseteq \mathcal{F}_{8}= \{12ij, 13kl, 145m, 1467, 234m, 2345, 2356, 2357, 2456:\\ 3 \le i < j \le n,4 \le k< l\le n, 6 \le m \le n\}.
\end{split}
\]
\begin{lemma}\label{F_{41}}
$\lambda(\mathcal{F}_8)\le \frac{1}{64}<0.0169$.
\end{lemma}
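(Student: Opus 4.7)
The plan is to follow the same template used for the earlier cases (e.g.\ Lemmas \ref{F_{21}}, \ref{F_{31}}) by passing to the link of vertex $8$. Let $\vec{x}$ be an optimum weighting of $\mathcal{F}_8$. By Fact \ref{fact2}(1) and the homogeneity of $\lambda$,
\[
\lambda(\mathcal{F}_8)=\tfrac{1}{4}\,\lambda(L_{\mathcal{F}_8}(8),\vec{x})\le \tfrac{1}{4}\,\lambda(L_{\mathcal{F}_8}(8)),
\]
so the task reduces to establishing $\lambda(L_{\mathcal{F}_8}(8))\le \tfrac{1}{16}$.

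First I would write $L_{\mathcal{F}_8}(8)$ out explicitly. The fixed 4-edges $1467,2345,2356,2357,2456$ do not contain $8$, so only the families $12ij$, $13kl$, $145m$, $234m$ contribute. Inserting $8$ in each slot yields the edges
\[
\{1,2,i\}\ (3\le i\le n,\,i\ne 8),\quad \{1,3,k\}\ (4\le k\le n,\,k\ne 8),\quad \{1,4,5\},\quad \{2,3,4\}.
\]

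Next, I would iteratively apply Lemma \ref{lemma-CP} to strip away every vertex outside $\{1,2,3,4,5\}$. For each $v\in\{6,7,9,10,\ldots,n\}$, the pair $\{5,v\}$ is uncovered in the current link, and the 2-link $L(v)\subseteq\{\{1,2\},\{1,3\}\}$ is contained in $L(5)=\{\{1,2\},\{1,3\},\{1,4\}\}$, so Lemma \ref{lemma-CP} gives $\lambda(L_{\mathcal{F}_8}(8))=\lambda(L_{\mathcal{F}_8}(8)\setminus\{v\})$. Peeling off these vertices one by one leaves a 3-graph $G'$ on $\{1,2,3,4,5\}$ whose edge set is
\[
\{1,2,3\},\ \{1,2,4\},\ \{1,2,5\},\ \{1,3,4\},\ \{1,3,5\},\ \{1,4,5\},\ \{2,3,4\}.
\]

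Finally, I would invoke Theorem \ref{PZ}. The graph $G'$ contains the clique $K_4^3$ on $\{1,2,3,4\}$ and has exactly $m=7=\binom{4}{3}+\binom{3}{2}$ edges, which lies in the admissible window with $l=5$. Hence Theorem \ref{PZ} gives $\lambda(G')=\lambda(K_4^3)=\tfrac{1}{16}$, and combining with the initial inequality yields $\lambda(\mathcal{F}_8)\le \tfrac{1}{4}\cdot\tfrac{1}{16}=\tfrac{1}{64}$. The only non-mechanical step is spotting that after the cascade of uncovered-pair compressions the surviving 3-graph has precisely the right edge count to land in the hypothesis of Theorem \ref{PZ}; everything else is bookkeeping identical to the earlier cases.
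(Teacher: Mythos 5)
Your proof is correct and follows essentially the same route as the paper: reduce to the link of vertex $8$ via Fact~\ref{fact2}(1), strip superfluous vertices by Lemma~\ref{lemma-CP} down to a $7$-edge $3$-graph on $\{1,2,3,4,5\}$ containing the clique on $\{1,2,3,4\}$, and apply Theorem~\ref{PZ} with $l=5$, $m=7$ to conclude $\lambda(L_{\mathcal{F}_8}(8))=\lambda(K_4^3)=1/16$. The only cosmetic difference is that you spell out the cascade of deletions (checking $L(v)\subseteq L(5)$ for each $v\notin[5]$) where the paper's write-up compresses this into a single sentence.
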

\begin{proof}
Note that $L_{\mathcal{F}_8}(8)=\{12i,13j,145,234: 3 \le i \le n, 4\le j \le n,i,j\neq8 \}$. Since the pair of vertices $\{5,6\}$ is not covered by any edge in $L_{\mathcal{F}_8}(8)$, we can assume that $L_{\mathcal{F}_8}(8)$ is on $[5]$ by Lemma \ref{lemma-CP}.
Moreover, $L_{\mathcal{F}_8}(8)[[5]]=\{123,124,125,134,135,145,234\}$, hence we have $ \lambda(L_{\mathcal{F}_{8}}(8)) = \lambda(K_4^3)= {1 \over 16}$ by Theorem \ref{PZ}. So $\lambda(\mathcal{F}_8)\le {1 \over 4}\lambda(L_{\mathcal{F}_{8}}(8)) = {1 \over 64}$.
\qed
\end{proof}
\medskip

Case 9. $2357 \notin \mathcal{F}$ and $2348 \in \mathcal{F}$.

In this case, we have $1567 \notin \mathcal{F}$. Then
$$\mathcal{F} \subseteq \mathcal{F}_{9}= \{12ij, 13kl, 14st, 234p, 2356, 2456: 3 \le i < j \le n, 4 \le k < l \le n,  5 \le s < t \le n, 5 \le p \le n\}.$$
\begin{lemma}\label{F_{42}}
$\lambda(\mathcal{F}_9)\le \frac{1}{64}<0.0169$.
\end{lemma}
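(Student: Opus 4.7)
The plan is to mirror the link-of-vertex-$8$ strategy used in Cases 3, 5, and 8. First, I would compute the $3$-uniform link
\[
L_{\mathcal{F}_9}(8) = \{12i,\ 13j,\ 14k,\ 234 : 3 \le i \le n,\ 4 \le j \le n,\ 5 \le k \le n,\ i,j,k \ne 8\},
\]
by going through each family of edges of $\mathcal{F}_9$ and keeping those containing $8$ (the edges $2356$ and $2456$ contribute nothing, while $234p$ contributes only when $p=8$, yielding the single triple $234$).

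Next, I would note that every edge of this link lies in $\{1,2,3,4\} \cup \{\text{at most one vertex of }\{5,\dots,n\}\setminus\{8\}\}$, so no pair $\{i,j\}$ with $5 \le i<j \le n$, $i,j\ne 8$, is covered. Applying Lemma \ref{lemma-CP} repeatedly, since $L(5) = L(6) = \cdots$ (all equal to $\{12,13,14\}$ as $2$-sets in the link), I can delete vertices from $\{5,6,7,9,\dots,n\}$ one at a time without decreasing the Lagrangian, until only the vertices $\{1,2,3,4\}$ remain. The surviving edges are exactly $\{123,124,134,234\} = K_4^{(3)}$, so
\[
\lambda\bigl(L_{\mathcal{F}_9}(8)\bigr) \le \lambda(K_4^{(3)}) = \tfrac{1}{16}.
\]

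Finally, I invoke Fact \ref{fact2}(1) applied to the optimum weighting $\vec{x}$ of $\mathcal{F}_9$: since the partial derivative in $x_8$ equals $4\lambda(\mathcal{F}_9)$ (assuming $x_8>0$, which holds because $\mathcal{F}_9$ is dense), we get
\[
\lambda(\mathcal{F}_9) \;=\; \tfrac{1}{4}\,\lambda\bigl(L_{\mathcal{F}_9}(8),\vec{x}\bigr) \;\le\; \tfrac{1}{4}\,\lambda\bigl(L_{\mathcal{F}_9}(8)\bigr) \;\le\; \tfrac{1}{64}.
\]
There is no real obstacle here: the calculation of the link is mechanical, and the only mild subtlety is verifying that $\{5,6\}$ (and more generally every pair inside $\{5,\dots,n\}\setminus\{8\}$) is uncovered in the link so that Lemma \ref{lemma-CP} applies and collapses the link to $K_4^{(3)}$.
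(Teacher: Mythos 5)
Your overall plan -- compute $L_{\mathcal{F}_9}(8)$, delete uncovered vertices via Lemma \ref{lemma-CP}, then pass to a small link and conclude with Fact \ref{fact2}(1) -- is the same as the paper's, and your computation of the link is correct. But there is a genuine gap in the deletion step: you claim to delete \emph{all} of $\{5,6,7,9,\dots,n\}$ and land on $[4]$. Lemma \ref{lemma-CP} only lets you delete a vertex $i$ when some pair $\{i,j\}$ is uncovered (and $L(i)\subseteq L(j)$). The vertices in $\{5,\dots,n\}\setminus\{8\}$ are pairwise uncovered in the link, so you can delete all but one of them, ending on $[5]$. The last survivor (say $5$) cannot be removed by Lemma \ref{lemma-CP}: in $L_{\mathcal{F}_9}(8)[[5]]=\{123,124,125,134,135,145,234\}$ the vertex $5$ is covered with each of $1,2,3,4$ via $125,135,145$, so no uncovered pair involving $5$ remains. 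Your statement $\lambda(L_{\mathcal{F}_9}(8))\le\lambda(K_4^{(3)})$ is therefore asserted, not proved -- and since $K_4^{(3)}\subseteq L_{\mathcal{F}_9}(8)[[5]]$, the trivial direction of the inequality goes the wrong way.

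To close the gap you need one more ingredient. The graph $L_{\mathcal{F}_9}(8)[[5]]$ has $m=7$ edges and contains a clique of order $4$, and $7={4\choose 3}+{3\choose 2}$, so Theorem \ref{PZ} (with $l=5$) gives $\lambda(L_{\mathcal{F}_9}(8)[[5]])=\lambda(K_4^{(3)})=\tfrac{1}{16}$. This is exactly how the paper finishes: reduce to $[5]$ by Lemma \ref{lemma-CP}, then invoke Theorem \ref{PZ}. With that patch the rest of your argument (the final $\lambda(\mathcal{F}_9)=\tfrac14\lambda(L_{\mathcal{F}_9}(8),\vec{x})\le\tfrac14\cdot\tfrac{1}{16}=\tfrac{1}{64}$) is correct.
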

\begin{proof}
Note that $L_{\mathcal{F}_9}(8)=\{12i,13j,14k,234: 3 \le i \le n,4 \le j \le n,5\le k \le n,i,j,k\neq8 \}$. Since the pair of vertices $\{5,6\}$ is not covered in $L_{\mathcal{F}_9}(8)$, we can assume that $L_{\mathcal{F}_9}(8)$ is on $[5]$ by Lemma \ref{lemma-CP}.
Moreover, $L_{\mathcal{F}_9}(8)[[5]]=\{123,124,125,134,135,145, 234\}$, hence we have $ \lambda(L_{\mathcal{F}_{9}}(8)) = \lambda(K_4^3)= {1 \over 16}$ by Theorem \ref{PZ}. So $\lambda(\mathcal{F}_9)\le {1 \over 4}\lambda(L_{\mathcal{F}_{9}}(8)) = {1 \over 64}$.
\qed
\end{proof}
\medskip

Case 10. $2357,2348 \notin \mathcal{F}$ and $2456 \in \mathcal{F}$.

In this case, we have $1378 \notin \mathcal{F}$. Then
\[
\begin{split}
\mathcal{F} \subseteq \mathcal{F}_{10}=\{12ij, 134k, 135l, 136m, 145l, 146m, 156m,2345, 2346, 2347, 2356,2456:\\
3 \le i < j \le n, 5\le k \le n, 6\le l\le n, 7\le m \le n\}.
\end{split}
\]
\begin{lemma}\label{F_{43}}
$\lambda(\mathcal{F}_{10})\le \frac{2}{135}<0.0169$.
\end{lemma}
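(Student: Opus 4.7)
My plan is to follow the template of the preceding cases: for an optimum weighting $\vec{x}$ of $\mathcal{F}_{10}$, Fact~\ref{fact2}(1) yields $\lambda(\mathcal{F}_{10}) = \frac{1}{4}\lambda(L_{\mathcal{F}_{10}}(8),\vec{x}) \le \frac{1}{4}\lambda(L_{\mathcal{F}_{10}}(8))$, so it suffices to bound the Lagrangian of the link of vertex $8$. Scanning the defining edge list of $\mathcal{F}_{10}$, the only contributions come from the family $12ij$ (producing $12v$ for every $v \in [n]\setminus\{1,2,8\}$) and from $134k$, $135l$, $136m$, $145l$, $146m$, $156m$ with the last coordinate set equal to $8$; the five triples $2345, 2346, 2347, 2356, 2456$ do not contain $8$. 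Hence
\[
L_{\mathcal{F}_{10}}(8) = \{12v : v \in [n]\setminus\{1,2,8\}\} \cup \{134,135,136,145,146,156\}.
\]

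Next I would trim this link down to a small graph on $[6]$. For each $v \in \{7,9,10,\ldots,n\}$, the only edge of $L_{\mathcal{F}_{10}}(8)$ containing $v$ is $12v$, so the pair $\{v,3\}$ is uncovered, and the link of $v$ inside $L_{\mathcal{F}_{10}}(8)$ equals $\{12\}$, which is contained in the link $\{12,14,15,16\}$ of vertex $3$. Lemma~\ref{lemma-CP} then allows us to delete these vertices one at a time without decreasing the Lagrangian, the hypothesis remaining valid at each stage because deletion never creates new edges. What remains is the $3$-graph $H$ on $[6]$ whose edges are precisely the ten $3$-subsets of $[6]$ that contain vertex $1$. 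The graph $H$ contains no $K_4^{(3)}$ (the triple $234$ is absent), so Theorem~\ref{PZ} cannot be invoked, and a short direct computation is required.

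For the final step, observe that vertices $2,3,4,5,6$ play symmetric roles in $H$: for any $2 \le i < j \le 6$ the sets $L(i\setminus j)$ and $L(j\setminus i)$ are both empty, so by Lemma~\ref{Equivalent} there is an optimum weighting with $x_2 = x_3 = x_4 = x_5 = x_6$. Writing $x_1 = a$ and $x_i = b$ for $2 \le i \le 6$ subject to $a + 5b = 1$ reduces $\lambda(H,\vec{x})$ to $10ab^2$; an elementary one-variable calculation locates the maximum at $a = \frac{1}{3}$, $b = \frac{2}{15}$, yielding $\lambda(H) = \frac{8}{135}$. Assembling the three steps gives $\lambda(\mathcal{F}_{10}) \le \frac{1}{4}\cdot\frac{8}{135} = \frac{2}{135}$, as required. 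I anticipate no real obstacle here; the main point of care is simply that the reduced link has too many edges for Theorem~\ref{PZ} to apply, which is why the argument finishes with a direct optimisation rather than an appeal to a clique-Lagrangian identity as in several earlier cases.
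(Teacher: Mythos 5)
Your proposal is correct and follows essentially the same route as the paper: identify the link $L_{\mathcal{F}_{10}}(8)$, iterate Lemma~\ref{lemma-CP} (using uncovered pairs and link containment) to prune it down to the star $\{1ij:2\le i<j\le 6\}$ on $[6]$, then compute the Lagrangian of that star via Lemma~\ref{Equivalent}. The only cosmetic differences are that you justify the pruning via the uncovered pair $\{v,3\}$ while the paper points to $\{6,7\}$, and you finish the one-variable optimisation by calculus whereas the paper uses AM--GM; both give $\lambda(H)=8/135$ and hence $\lambda(\mathcal{F}_{10})\le 2/135$.
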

\begin{proof}
Let $\vec{x}=(x_1,\dots,x_n)$ be an optimum weighting of $\mathcal{F}_{10}$. Note that $$L_{\mathcal{F}_{10}}(8)=\{12i, 134, 135, 136, 145, 146, 156: 3 \le i \le n,i\neq8\}.$$
Since the pair of vertices $\{6,7\}$ is not covered in $L_{\mathcal{F}_{10}}(8)$, then by Lemma \ref{lemma-CP}, $\lambda(L_{\mathcal{F}_{10}}(8))= \lambda(\{1ij:2\leq i<j\leq 6\})$.
Let $G=\{1ij:2\leq i<j\leq 6\}$ and $\vec{x}=(x_1,\dots,x_6)$ be an optimum weighting of $G$. By Lemma \ref{Equivalent}, we can assume that $x_1=x, x_2=\cdots=x_6=\frac{1-x}{5}$.
 So
\begin{eqnarray}\label{equ}
  \lambda(L_{\mathcal{F}_{10}}(8))=\lambda(G) =x{5 \choose 2}\left( {1-x \over 5} \right)^2={1 \over 5} \cdot 2x \cdot (1-x)^2\leq{1 \over 5}\left( {2 \over 3} \right)^3={8 \over 135}.
 \end{eqnarray}
 So $\lambda(\mathcal{F}_{10})\le {1 \over 4}\lambda(L_{\mathcal{F}_{10}}(8))\le {2 \over 135}$.
\qed
\end{proof}
\medskip

Case 11. $2357, 2348, 2456 \notin \mathcal{F}$ and $2356 \in \mathcal{F}$.

In this case, we have $1478 \notin \mathcal{F}$. Then
 \[
 \begin{split}
 \mathcal{F} \subseteq \mathcal{F}_{11}= \{12i_1j_1, 13i_2j_2, 145k, 146l, 156l, 2345, 2346,2347, 2356:3 \le i_1 < j_1 \le n,\\ 4 \le i_2 < j_2 \le n,6\le k \le n, 7\le l\le n\}.
 \end{split}
 \]
\begin{lemma}\label{F_{51}}
$\lambda(\mathcal{F}_{11})\le \frac{2}{135}<0.0169$.
\end{lemma}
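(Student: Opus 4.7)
The plan is to mirror the argument used for Case 10 (Lemma \ref{F_{43}}) almost verbatim, since the link of the vertex $8$ in $\mathcal{F}_{11}$ turns out to reduce, after deleting uncovered vertices, to the same $3$-graph $\{1ij : 2 \le i < j \le 6\}$ whose Lagrangian was bounded in equation (\ref{equ}) of that lemma.

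First I would let $\vec{x}=(x_1,\dots,x_n)$ be an optimum weighting of $\mathcal{F}_{11}$ and directly write down the link of vertex $8$:
\[
L_{\mathcal{F}_{11}}(8)=\{12i,\,13j,\,145,\,146,\,156:\ 3\le i\le n,\ 4\le j\le n,\ i,j\ne 8\}.
\]
This comes from inspecting which edges of $\mathcal{F}_{11}$ contain $8$: the $2$-blocks $12i_1j_1$ and $13i_2j_2$ contribute the pairs $12i$ and $13j$, the blocks $145k$, $146l$, $156l$ each contribute a single triple once $k$ or $l$ is forced to be $8$, and none of $2345,2346,2347,2356$ contain $8$.

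Next, I would observe that the pair $\{6,7\}$ is not covered in $L_{\mathcal{F}_{11}}(8)$ (no edge contains both $6$ and $7$), and moreover the link of $7$ inside $L_{\mathcal{F}_{11}}(8)$ is $\{12,13\}$, while the link of $6$ is $\{12,13,14,15\}$, so $L(7)\subseteq L(6)$. By Lemma \ref{lemma-CP} we may delete vertex $7$ without decreasing the Lagrangian. For every $v\ge 7$ with $v\ne 8$, the vertex $v$ plays exactly the role of $7$: it appears only in $12v$ and $13v$, so $L(v)=\{12,13\}\subseteq L(6)$, and the same lemma lets me peel off $v$. Iterating, $\lambda(L_{\mathcal{F}_{11}}(8))$ equals the Lagrangian of the $3$-graph on $\{1,2,3,4,5,6\}$ with edge set
\[
\{123,124,125,126,134,135,136,145,146,156\}=\{1ij:2\le i<j\le 6\}.
\]

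This is precisely the graph $G$ analysed in Case 10, whose Lagrangian was bounded in equation (\ref{equ}) by $\tfrac{8}{135}$ using the symmetry $x_2=\cdots=x_6=(1-x_1)/5$. Finally, by Fact \ref{fact2}(1) applied to $\mathcal{F}_{11}$,
\[
\lambda(\mathcal{F}_{11})=\tfrac{1}{4}\lambda(L_{\mathcal{F}_{11}}(8),\vec{x})\le \tfrac{1}{4}\lambda(L_{\mathcal{F}_{11}}(8))\le \tfrac{1}{4}\cdot\tfrac{8}{135}=\tfrac{2}{135}<0.0169,
\]
as claimed. There is no real obstacle here: the only step that needs any care is verifying that the link of $8$ is exactly what is stated and that all vertices from $7$ onwards can be compressed away simultaneously; both are immediate from the explicit form of $\mathcal{F}_{11}$.
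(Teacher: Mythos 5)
Your proof is correct and follows essentially the same route as the paper: compute $L_{\mathcal{F}_{11}}(8)$, peel off vertices $\ge 7$ via Lemma \ref{lemma-CP} to reduce to $\{1ij:2\le i<j\le 6\}$, invoke the bound $\tfrac{8}{135}$ from~(\ref{equ}), and finish with Fact~\ref{fact2}(1). You are somewhat more explicit than the paper about \emph{why} iterated applications of Lemma \ref{lemma-CP} remove every vertex $v\ge 7$ (namely $L(v)=\{12,13\}\subseteq L(6)$), where the paper only mentions that $\{6,7\}$ is uncovered; this is a welcome clarification of the same argument rather than a different one.
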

\begin{proof}
 Note that $L_{\mathcal{F}_{11}}(8)=\{12i, 13j, 145, 146, 156: 3 \le i \le n, 4 \le j \le n,i,j\neq8\}.$
Since the pair of vertices $\{6,7\}$ is not covered in $L_{\mathcal{F}_{11}}(8)$, then by Lemma \ref{lemma-CP}, $\lambda(L_{\mathcal{F}_{11}}(8))= \lambda(\{1ij:2\leq i<j\leq 6\})$. So 
 So $\lambda(\mathcal{F}_{11})\le {1 \over 4}\lambda(L_{\mathcal{F}_{11}}(8))\le {2 \over 135}$ from (\ref{equ}).
\qed
\end{proof}
\medskip

Case 12. $2348, 2356 \notin \mathcal{F}$ and $2347 \in \mathcal{F}$.

In this case, we have $1568 \notin \mathcal{F}$. Then
\[
\begin{split}
\mathcal{F} \subseteq \mathcal{F}_{12}= \{12i_1j_1, 13i_2j_2, 14i_3j_3, 1567, 2345, 2346, 2347: 3 \le i_1 < j_1 \le n,\\ 4 \le i_2 < j_2 \le n, 5 \le i_3 < j_3 \le n\}.
 \end{split}
 \]
\begin{lemma}\label{F_{52}}
$\lambda(\mathcal{F}_{12})\le \frac{1}{72}<0.0169.$
\end{lemma}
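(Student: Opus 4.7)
The plan is to mimic the strategy of Cases 10 and 11, namely to bound $\lambda(\mathcal{F}_{12})$ through the Lagrangian of the link of vertex~$8$. First I would write down
\[
L_{\mathcal{F}_{12}}(8)=\{12k:3\le k\le n,\,k\ne 8\}\cup\{13k:4\le k\le n,\,k\ne 8\}\cup\{14k:5\le k\le n,\,k\ne 8\},
\]
since among the seven types of edges in $\mathcal{F}_{12}$ only the first three classes ($12i_1j_1$, $13i_2j_2$, $14i_3j_3$) can contain the vertex~$8$. By Fact~\ref{fact2}(1),
\[
\lambda(\mathcal{F}_{12})=\tfrac{1}{4}\lambda(L_{\mathcal{F}_{12}}(8),\vec{x})\le\tfrac{1}{4}\lambda(L_{\mathcal{F}_{12}}(8)),
\]
so it suffices to show $\lambda(L_{\mathcal{F}_{12}}(8))\le \tfrac{1}{18}$.

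Next I would exploit that every edge of the link contains the vertex~$1$: the vertices $5,6,7,9,10,\ldots,n$ play interchangeable roles, each appearing only in triples of the form $12k,\,13k,\,14k$. Consequently any pair $\{a,b\}$ with $a,b\in\{5,6,7,9,\ldots,n\}$ is \emph{not} covered in the link (no triple can contain both). Applying Lemma~\ref{lemma-CP} repeatedly to such uncovered pairs, we reduce, without decreasing the Lagrangian, to the 3-graph on $\{1,2,3,4,5\}$ whose edge set is
\[
\{123,\,124,\,125,\,134,\,135,\,145\},
\]
i.e.\ all triples in $[5]$ containing the vertex~$1$.

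For this 3-graph I would bound the Lagrangian directly. For any feasible $\vec{x}$ on $[5]$,
\[
\lambda(G,\vec{x})=x_1\sum_{2\le i<j\le 5}x_ix_j\le x_1\cdot\tfrac{3}{8}(1-x_1)^2,
\]
using $e_2(x_2,x_3,x_4,x_5)\le\tfrac{3}{8}(x_2+x_3+x_4+x_5)^2$ (either by AM--GM after setting the four weights equal, or via $e_2=\tfrac{1}{2}((\sum x_i)^2-\sum x_i^2)$). Optimising the one-variable function $\tfrac{3}{8}x_1(1-x_1)^2$ on $[0,1]$ gives the maximum $\tfrac{1}{18}$ at $x_1=\tfrac{1}{3}$. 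Hence $\lambda(L_{\mathcal{F}_{12}}(8))\le\tfrac{1}{18}$, which yields $\lambda(\mathcal{F}_{12})\le\tfrac{1}{72}<0.0169$, completing the lemma.

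There is no real obstacle here: the only mildly delicate point is verifying that, after peeling off uncovered pairs, we really land on the star-like 3-graph above and not something larger; this is a direct check from the explicit description of $L_{\mathcal{F}_{12}}(8)$ together with the observation that the vertices $5,6,7,9,\ldots,n$ are pairwise uncovered. Once that reduction is in hand, the Lagrangian estimate of the star $\{1ij:2\le i<j\le 5\}$ is a standard one-variable optimisation.
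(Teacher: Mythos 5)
Your proof is correct and takes essentially the same route as the paper: identify the link $L_{\mathcal{F}_{12}}(8)$, note the uncovered pairs among $\{5,6,7,9,\dots,n\}$, invoke Lemma~\ref{lemma-CP} to reduce to the star $\{1ij:2\le i<j\le 5\}$, and optimise to get $1/18$. The only cosmetic difference is that you bound $e_2$ directly via AM--GM whereas the paper uses Lemma~\ref{Equivalent} to symmetrise the weights; both yield the same one-variable optimisation and the same value $\frac{1}{18}$.
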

\begin{proof}
Let $\vec{x}=(x_1,\dots,x_n)$ be an optimum weighting of $\mathcal{F}_{12}$. Note that $L_{\mathcal{F}_{12}}(8)=\{12i, 13j, 14k: 3 \le i \le n, 4 \le j \le n,5 \le k \le n,i,j,k\neq8\}.$
Since the pair of vertices $\{5,6\}$ is not covered in $L_{\mathcal{F}_{12}}(8)$, then by Lemma \ref{lemma-CP}, $\lambda(L_{\mathcal{F}_{12}}(8))= \lambda(\{1ij:2\leq i<j\leq 5\})$. Similar to (\ref{equ}), $\lambda(L_{\mathcal{F}_{12}}(8))\leq\frac{1}{18}.$ Then we have $\lambda(\mathcal{F}_{12})\le {1 \over 72}$.
\qed
\end{proof}
\medskip

Case 13. $2356, 2347 \notin \mathcal{F}$ and $2346 \in \mathcal{F}$.

In this case, we have $1578 \notin \mathcal{F}$. Then
\[
\begin{split}
\mathcal{F} \subseteq \mathcal{F}_{13}= \{12i_1j_1, 13i_2j_2, 14i_3j_3, 156k, 2345, 2346 : 3\le i_1 < j_1 \le n,4 \le i_2 < j_2 \le n,\\5 \le i_3 < j_3 \le n,7\leq k\leq n\}.
\end{split}
\]
\begin{lemma}\label{F_{61}}
$\lambda(\mathcal{F}_{13})\le \frac{2}{135}<0.0169$.
\end{lemma}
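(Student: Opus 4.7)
The plan is to follow the exact same template used in the preceding cases (in particular Lemmas \ref{F_{43}} and \ref{F_{51}}): take an optimum weighting $\vec{x}=(x_1,\dots,x_n)$ of $\mathcal{F}_{13}$ and study the link of vertex $8$, then reduce that link via Lemma \ref{lemma-CP} to a small graph whose Lagrangian we have already bounded.

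First I would read off $L_{\mathcal{F}_{13}}(8)$ by collecting the triples $e$ such that $e\cup\{8\}\in\mathcal{F}_{13}$. The edges $12i_1j_1$, $13i_2j_2$ and $14i_3j_3$ contribute, respectively, $\{12i:3\le i\le n,\,i\neq 8\}$, $\{13j:4\le j\le n,\,j\neq 8\}$ and $\{14k:5\le k\le n,\,k\neq 8\}$; the edge $1568$ contributes $156$; and the edges $2345$, $2346$ contribute nothing. Hence
$$L_{\mathcal{F}_{13}}(8)=\{12i,\ 13j,\ 14k,\ 156 : 3\le i\le n,\ 4\le j\le n,\ 5\le k\le n,\ i,j,k\neq 8\}.$$

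Next I would observe that the pair $\{6,7\}$ is not covered in $L_{\mathcal{F}_{13}}(8)$, since no triple listed above contains both $6$ and $7$. More generally, for every $i\ge 7$ the only triples containing $i$ are $12i$, $13i$ and $14i$, so $L(i)=\{12,13,14\}\subseteq L(5)$ and the pair $\{i,5\}$ is uncovered. Iterating the ``furthermore'' clause of Lemma \ref{lemma-CP} to peel off vertices $7,9,10,\dots,n$ one at a time, I reduce to the $3$-graph on $[6]$ whose edge set is $\{1ij : 2\le i<j\le 6\}$. This is exactly the star $G$ already analysed in the proof of Lemma \ref{F_{43}}, for which inequality (\ref{equ}) gives $\lambda(G)\le \tfrac{8}{135}$.

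Finally, by Fact \ref{fact2}(1),
$$\lambda(\mathcal{F}_{13})=\frac{1}{4}\lambda(L_{\mathcal{F}_{13}}(8),\vec{x})\le \frac{1}{4}\lambda(L_{\mathcal{F}_{13}}(8))=\frac{1}{4}\lambda(G)\le \frac{2}{135}<0.0169,$$
which is the required bound. There is no real obstacle here: the step that deserves slight care is checking that $L(i)\subseteq L(j)$ holds at each iterated removal of a high-indexed vertex, but this is immediate from the listed structure of $L_{\mathcal{F}_{13}}(8)$ and the symmetry among vertices $\{7,9,10,\dots,n\}$.
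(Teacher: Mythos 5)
Your proof is correct and follows essentially the same route as the paper: you compute $L_{\mathcal{F}_{13}}(8)$ identically, invoke Lemma \ref{lemma-CP} to delete the uncovered high-indexed vertices down to the star $\{1ij:2\le i<j\le 6\}$ on $[6]$, cite inequality (\ref{equ}) for the bound $8/135$ on its Lagrangian, and then apply Fact \ref{fact2}(1) to the link of $8$. The only difference is that you spell out the iterated peeling of vertices $7,9,\dots,n$ via the ``furthermore'' clause of Lemma \ref{lemma-CP}, whereas the paper states the conclusion $\lambda(L_{\mathcal{F}_{13}}(8))=\lambda(\{1ij:2\le i<j\le 6\})$ in one step; this is a helpful elaboration, not a different method.
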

\begin{proof}
Note that $L_{\mathcal{F}_{13}}(8)=\{12i, 13j, 14k, 156: 3 \le i \le n, 4 \le j \le n, 5 \le k \le n,i,j,k\neq8\}$.
Since the pair of vertices $\{6,7\}$ is not covered in $L_{\mathcal{F}_{13}}(8)$, then by Lemma \ref{lemma-CP}, $\lambda(L_{\mathcal{F}_{13}}(8))= \lambda(\{1ij:2\leq i<j\leq 6\})$. So $\lambda(\mathcal{F}_{13})\le {1 \over 4}\lambda(L_{\mathcal{F}_{13}}(8))\le {2 \over 135}$ from (\ref{equ}).
\end{proof}
\medskip

Case 14. $2346 \notin \mathcal{F}$ and $2345 \in \mathcal{F}$.

In this case, we have $1678 \notin \mathcal{F}$. Then
\[
\begin{split}
\mathcal{F} \subseteq \mathcal{F}_{14}=
 \{12i_1j_1, 13i_2j_2, 14i_3j_3, 15i_4j_4, 2345:3\le i_1 < j_1 \le n,4 \le i_2 < j_2 \le n,\\5 \le i_3 < j_3 \le n,6 \le i_4 < j_4 \le n,\}.
\end{split}
\]
\begin{lemma}\label{F_{7}}
$\lambda(\mathcal{F}_{14})\le \frac{2}{135}<0.0169$.
\end{lemma}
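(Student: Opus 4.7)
The plan is to mirror the argument used in Cases $10$, $11$, $13$ (Lemmas \ref{F_{43}}, \ref{F_{51}}, \ref{F_{61}}), namely to pass to the link of vertex $8$ and reduce it, via non-covered pairs, to the small $3$-graph $\{1ij:2\le i<j\le 6\}$ whose Lagrangian was already bounded in display (\ref{equ}).

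First I would let $\vec{x}=(x_1,\dots,x_n)$ be an optimum weighting of $\mathcal{F}_{14}$ and compute the link explicitly. Since the only edge of $\mathcal{F}_{14}$ not of the form $1\alpha\beta\gamma$ is $2345$ (which does not contain $8$), we have
\[
L_{\mathcal{F}_{14}}(8)=\{12i,13j,14k,15l: 3\le i\le n,\ 4\le j\le n,\ 5\le k\le n,\ 6\le l\le n,\ i,j,k,l\ne 8\}.
\]
By Fact \ref{fact2}(1), $\lambda(\mathcal{F}_{14})=\tfrac{1}{4}\lambda(L_{\mathcal{F}_{14}}(8),\vec{x})\le \tfrac{1}{4}\lambda(L_{\mathcal{F}_{14}}(8))$, so it suffices to bound the Lagrangian of the $3$-graph $L_{\mathcal{F}_{14}}(8)$.

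Next I would observe that every edge of $L_{\mathcal{F}_{14}}(8)$ contains the vertex $1$, so for any two distinct vertices $u,v\in\{6,7,9,10,\dots,n\}\setminus\{8\}$ the pair $\{u,v\}$ is uncovered and, moreover, $L(u)=L(v)=\{12,13,14,15\}$. Applying Lemma \ref{lemma-CP} iteratively (removing one of each such pair) reduces $L_{\mathcal{F}_{14}}(8)$ to the $3$-graph on $\{1,2,3,4,5,6\}$ whose edges are exactly $\{1ij:2\le i<j\le 6\}$, without decreasing the Lagrangian. This is precisely the graph $G$ treated in (\ref{equ}), giving
\[
\lambda(L_{\mathcal{F}_{14}}(8))\le \lambda(\{1ij:2\le i<j\le 6\})\le \tfrac{8}{135}.
\]

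Combining these two steps yields $\lambda(\mathcal{F}_{14})\le \tfrac{1}{4}\cdot\tfrac{8}{135}=\tfrac{2}{135}<0.0169$, completing the proof. There is no real obstacle here—the argument is essentially mechanical once the link is written down—but one small bookkeeping point to be careful about is verifying that the edge $2345$ plays no role (it lies outside the link of $8$) and that the repeated applications of Lemma \ref{lemma-CP} are valid because each vertex removed remains linked to the same three pairs through vertex~$1$.
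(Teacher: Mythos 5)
Your proof is correct and follows essentially the same route as the paper: pass to the link of vertex $8$, note that pairs of vertices $\ge 6$ are uncovered with identical links, use Lemma~\ref{lemma-CP} to reduce to $\{1ij:2\le i<j\le 6\}$, and apply the bound~(\ref{equ}). The only difference is that you spell out the iterated application of Lemma~\ref{lemma-CP} explicitly, whereas the paper states the conclusion $\lambda(L_{\mathcal{F}_{14}}(8))=\lambda(\{1ij:2\le i<j\le 6\})$ in one step.
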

\begin{proof}
Note that $L_{\mathcal{F}_{14}}(8)=\{12i, 13j, 14k, 15l: 3 \le i \le n, 4 \le j \le n,5 \le k \le n,6 \le l \le n,i,j,k,l\neq8\}$.
Since the pair of vertices $\{6,7\}$ is not covered in $L_{\mathcal{F}_{14}}(8)$, then by Lemma \ref{lemma-CP}, $\lambda(L_{\mathcal{F}_{14}}(8))= \lambda(\{1ij:2\leq i<j\leq 6\})$. So $\lambda(\mathcal{F}_{14})\le {1 \over 4}\lambda(L_{\mathcal{F}_{14}}(8))\le {2 \over 135}$ from (\ref{equ}).
\end{proof}
\medskip

Case 15. $2345 \notin \mathcal{F}$.

 In this case, $ \mathcal{F} \subseteq \mathcal{F}_{15}£º= \{1ijk:2 \le i<j<k\le n\}=\mathcal{S}_n.$ By Lemma \ref{F_0} we have $\lambda(\mathcal{F}_{15})= {9 \over 512}{(n-2)(n-3) \over (n-1)^2}\le {9 \over 512}$.

 If $\mathcal{F} \nsubseteq \mathcal{S}_n$,  then by the argument in Cases 1-14,  $\mathcal{F}$ is a subgraph of one of the $K_7^4$ and  $\mathcal{F}_i, i \in [14]$. So  $\lambda(\mathcal{F})<0.0169$.
  Hence we complete the proof.
\qed

\section{An application to a hypergraph Tur\'an problem}

Given $r$-graphs $F$ and $G$, a function $f: V(F)\longrightarrow V(G)$ is a {\em homomorphism} if it preserves edges, i.e., $f(i_1)f(i_2)\dots f(i_r) \in G$ if $i_1i_2\dots i_r \in F$. We say $G$ is {\em $F$-hom-free} if there is no homomorphism from $F$ to $G$.
We need the following connection between Tur\'an densities and Lagrangians due to Sidorenko.

\begin{lemma}{\em (see e.g. [\cite{Keevash}, Section 3])} \label{lemmahom}
Given an $r$-graph $F$, $\pi(F)$ is the supremum of $r! \lambda(G)$ over  all dense $F$-hom-free $r$-graphs $G$.
\end{lemma}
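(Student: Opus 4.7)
The statement contains two inequalities, and the plan is to handle them separately. The easy direction is $\pi(F)\ge r!\lambda(G)$ for every dense $F$-hom-free $r$-graph $G$, and I would prove it by a weighted blowup construction. Given such $G$ on $n$ vertices with optimum weighting $\vec{x}=(x_1,\ldots,x_n)$, I construct for each large $N$ the blowup $G^\ast_N$ by partitioning $[N]$ into parts $V_1,\ldots,V_n$ with $|V_i|=\lfloor x_iN\rfloor$ (adjusting one part for rounding) and declaring an $r$-set to be an edge iff it meets the $V_i$'s according to some edge of $G$. A direct count gives $|E(G^\ast_N)|=N^r\lambda(G,\vec{x})+O(N^{r-1})=N^r\lambda(G)+O(N^{r-1})$, so the edge density tends to $r!\lambda(G)$. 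Moreover $G^\ast_N$ is $F$-free: an injective copy of $F$ inside $G^\ast_N$ would collapse under the projection $V_i\mapsto i$ to a homomorphism $F\to G$, contradicting $F$-hom-freeness. Hence $ex(N,F)\ge|E(G^\ast_N)|$, and letting $N\to\infty$ and taking the supremum over $G$ yields this half of the identity.

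For the reverse inequality $\pi(F)\le\sup r!\lambda(G)$, the plan is to start from a near-extremal $F$-free sequence and reduce it to an $F$-hom-free witness of comparable Lagrangian. Fix $\varepsilon>0$ and let $H_N$ be an $F$-free $r$-graph on $N$ vertices with $|E(H_N)|\ge(\pi(F)-\varepsilon)\binom{N}{r}$. The uniform weighting gives $\lambda(H_N)\ge|E(H_N)|/N^r$, so $r!\lambda(H_N)\ge\pi(F)-\varepsilon-o(1)$. If $H_N$ is itself $F$-hom-free, then after passing to a dense subgraph of equal Lagrangian we are done. Otherwise I would run the following reduction: whenever two vertices $u,v\in V(H_N)$ satisfy $L(u\setminus v)=L(v\setminus u)$ and $\{u,v\}$ is not covered, identify them. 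By Lemma~\ref{Equivalent} together with Corollary~\ref{coroblowup} this operation does not decrease the Lagrangian. The process strictly reduces the vertex count, so it terminates; let $G_N$ be the outcome (passed to a dense subgraph if necessary).

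The main obstacle is verifying that $G_N$ is actually $F$-hom-free, and this is where the argument becomes delicate. The intended idea is that any non-injective homomorphism $\phi\colon F\to G_N$ could be ``uncollapsed'' by a sufficiently large uniform blowup of $G_N$: distributing the preimages of each vertex of $F$ into distinct copies inside the blowup produces an honest copy of $F$. Since the blowup has the same Lagrangian as $G_N$ by Corollary~\ref{coroblowup}, and its edge density approaches $r!\lambda(G_N)\ge\pi(F)-\varepsilon-o(1)$, one obtains a graph of near-extremal density containing $F$, from which an $F$-free dense subgraph of equal or larger Lagrangian can be extracted and fed back into the loop. Formalising this ``blowup-stability'' step is the true technical heart of the lemma; in practice it is cleanest to invoke hypergraph limits (or the flag-algebra reformulation), showing that any subsequential limit of the normalised $H_N$'s is representable as a blowup of a finite $F$-hom-free seed graph whose Lagrangian equals $\pi(F)/r!$. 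Once this is in hand, sending $\varepsilon\to 0$ completes the reverse inequality and hence the lemma.
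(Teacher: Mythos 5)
Your first half (the inequality $\pi(F)\ge r!\lambda(G)$ for every $F$-hom-free $G$, via the weighted blowup $G^\ast_N$) is correct and is the standard construction; note the hypothesis that $G$ is dense is not even needed there, and this is in fact the only direction the paper uses (in Theorem~\ref{turannumber1}). The paper itself gives no proof of Lemma~\ref{lemmahom} --- it is quoted from Keevash's survey \cite{Keevash}, where it is attributed to Sidorenko --- so the comparison is against that standard argument.

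Your second half has a genuine gap. Identifying two vertices $u,v$ with $L(u\setminus v)=L(v\setminus u)$ and $\{u,v\}$ uncovered preserves the homomorphism structure exactly: a homomorphism $F\to G$ projects to one into the quotient, and a homomorphism into the quotient lifts back to $G$ by always choosing $u$ for the merged class, so $G$ and its quotient admit precisely the same homomorphic images of $F$. Thus when your reduction terminates, the resulting $G_N$ is no closer to being $F$-hom-free than $H_N$ was. Worse, once you ``pass to a dense subgraph'' (as you do at the end of the loop), Fact~\ref{fact1} says every pair is covered, so the hypothesis of your identification step is vacuous --- the loop stops not because it succeeded but because it can no longer act. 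The ``uncollapse by blowup and feed back'' idea is circular: it produces a near-extremal graph that \emph{does} contain $F$, from which you extract an $F$-free subgraph and are back at the start, with no monotone quantity guaranteeing progress. The appeal to hypergraph limits is a hope rather than a proof; limits of extremal sequences need not be blowups of finite seed graphs.

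The correct route (the one in \cite{Keevash}) goes through the finite family $\mathcal{F}^\ast$ of homomorphic images of $F$, i.e.\ the $r$-graphs obtained from $F$ by identifying vertices no two of which lie in a common edge. An $r$-graph is $F$-hom-free if and only if it is $\mathcal{F}^\ast$-free. One first proves $\pi(F)=\pi(\mathcal{F}^\ast)$: the inequality $\pi(\mathcal{F}^\ast)\le\pi(F)$ is trivial since $F\in\mathcal{F}^\ast$, and the reverse uses supersaturation together with the Erd\H{o}s theorem that an $r$-graph of density bounded away from zero contains arbitrarily large blowups of any fixed subgraph of which it has $\Omega(n^{v})$ copies: a graph of density above $\pi(\mathcal{F}^\ast)+\epsilon$ has many copies of some $F'\in\mathcal{F}^\ast$, hence a large blowup of $F'$, hence a copy of $F$. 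Then, for an $\mathcal{F}^\ast$-free $H$ on $n$ vertices, the uniform weighting gives $r!\lambda(H)\ge e(H)/\binom{n}{r}-o(1)$, and passing to a dense subgraph preserves both $\lambda$ and $\mathcal{F}^\ast$-freeness, which finishes the hard direction. The essential ingredient you are missing is this supersaturation-plus-blowup step; vertex identification cannot replace it.
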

Write $H_8^{M_2^4}$ as $K_{4,4}^4$.
From Theorem \ref{theoremM^4_2} and Lemma \ref{lemmahom} we get that
\begin{theo}\label{turannumber1}
$\pi(K_{4,4}^4)={27 \over 64}$.
\end{theo}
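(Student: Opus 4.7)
My plan is to combine Theorem \ref{theoremM^4_2} -- which caps the Lagrangian of any $M_2^4$-free $4$-graph at $9/512$ -- with Sidorenko's bridge, Lemma \ref{lemmahom}, which identifies $\pi(K_{4,4}^4)$ with the supremum of $4!\lambda(G)$ over all dense $K_{4,4}^4$-hom-free $4$-graphs $G$. The glue between the two results is a short observation that, under the density hypothesis, $K_{4,4}^4$-hom-freeness already forces $M_2^4$-freeness; this is the main (though not deep) obstacle in the argument.

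For the upper bound, I will prove the contrapositive: if a dense $4$-graph $G$ contains $M_2^4$, then $G$ admits a homomorphism from $K_{4,4}^4 = H_8^{M_2^4}$. Suppose $\{u_1,u_2,u_3,u_4\}$ and $\{u_5,u_6,u_7,u_8\}$ are two disjoint edges of $G$. By Fact \ref{fact1}, every pair in $V(G)$ is covered, so for each cross pair $\{u_i,u_j\}$ with $i\in\{1,\ldots,4\}$, $j\in\{5,\ldots,8\}$ there is an edge $\{u_i,u_j,w_{ij},w'_{ij}\}\in G$. Recalling that $K_{4,4}^4$ has core $\{v_1,\ldots,v_8\}$ carrying $M_2^4$, together with a disjoint auxiliary pair $\{b_{ij},b'_{ij}\}$ for each of the $16$ uncovered cross pairs, I define $f(v_i)=u_i$, $f(b_{ij})=w_{ij}$, $f(b'_{ij})=w'_{ij}$. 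A direct check shows that $f$ sends both matching edges and all $16$ covering edges to edges of $G$, giving a homomorphism and contradicting hom-freeness. Thus every dense $K_{4,4}^4$-hom-free $G$ is $M_2^4$-free, and Theorem \ref{theoremM^4_2} then yields $4!\lambda(G)\le 4!\cdot\frac{9}{512}=\frac{27}{64}$, so $\pi(K_{4,4}^4)\le 27/64$.

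For the matching lower bound I take $G=\mathcal{S}_n$ (passing, if necessary, to a dense subgraph with the same Lagrangian, which remains $K_{4,4}^4$-hom-free since hom-freeness passes to subgraphs). By Lemma \ref{F_0}, $\lambda(\mathcal{S}_n)\to 9/512$ as $n\to\infty$. To verify hom-freeness, suppose $f:V(K_{4,4}^4)\to V(\mathcal{S}_n)$ is a homomorphism; since every edge of $\mathcal{S}_n$ contains the apex vertex $1$ exactly once, the set $T:=f^{-1}(1)$ must meet every edge of $K_{4,4}^4$ in exactly one vertex (two vertices in $T$ would force $|f(e)|<4$, so $f(e)$ could not be an edge). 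Applied to the two matching edges of the core, $T$ must contain some $v_a\in\{v_1,\ldots,v_4\}$ and some $v_b\in\{v_5,\ldots,v_8\}$; but then the covering edge $\{v_a,v_b,b_{ab},b'_{ab}\}$ of the uncovered pair $\{v_a,v_b\}$ would meet $T$ in two vertices, a contradiction. Hence Lemma \ref{lemmahom} yields $\pi(K_{4,4}^4)\ge 4!\cdot 9/512 = 27/64$, completing the proof.
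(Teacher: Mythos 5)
Your proof is correct and follows essentially the same route as the paper: for the upper bound you invoke Lemma \ref{lemmahom}, use Fact \ref{fact1} (density implies all pairs covered) to show that a dense $K_{4,4}^4$-hom-free $4$-graph must be $M_2^4$-free, and then apply Theorem \ref{theoremM^4_2}; for the lower bound you check that $\mathcal{S}_n$ is $K_{4,4}^4$-hom-free and use Lemma \ref{F_0}. The only difference is presentational -- you spell out the lower-bound verification (via the preimage of the apex) that the paper dismisses in a single sentence, and you add a harmless but unnecessary caveat about passing to a dense subgraph, since $\mathcal{S}_n$ is already dense.
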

\begin{proof}
Since $\mathcal{S}$ is $K_{4,4}^4$-free, we get the lower bound. For the upper bound, by Lemma \ref{lemmahom}, it suffices to show that
$\lambda(G) \le {9 \over 512} $ for any  dense $K_{4,4}^4$-free $4$-graph $G$. For every  dense $K_{4,4}^4$-hom-free $4$-graph $G$, we claim that $G$ is $M_2^4$-free. Otherwise suppose there are two disjoint edges $e, f \in G$. Since
 $G$ cover pairs, then there is an edge $e_{ab} \in G$ with $\{a, b\} \subseteq e_{ab}$ for every $a \in e$ and $b \in f$.
 Thus $\{e, f\} \cup \{e_{ab}:a\in e, b\in f\}$ forms a copy of $K_{4,4}^4$ which
 contradicts $G$ being $K_{4,4}^4$-hom-free. Then $\lambda(G) \le {9 \over 512} $ by Theorem \ref{theoremM^4_2}.
\qed
\end{proof}

\begin{coro}
$ex(n, K_{4,4}^4)={9 \over 512}n^4+o(n^4)$ for sufficiently large $n$.
\end{coro}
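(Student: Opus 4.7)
The corollary is essentially an immediate consequence of Theorem \ref{turannumber1} together with the definition of Tur\'an density, so the plan is almost entirely arithmetic.

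The plan is to unpack the limit definition $\pi(F)=\lim_{n\to\infty} ex(n,F)/\binom{n}{r}$. Applied to $F=K_{4,4}^4$ and $r=4$, this gives
\[
ex(n,K_{4,4}^4) = \pi(K_{4,4}^4)\binom{n}{4} + o\!\left(\binom{n}{4}\right).
\]
By Theorem \ref{turannumber1}, $\pi(K_{4,4}^4)=27/64$. Substituting and using $\binom{n}{4} = \frac{n^4}{24} + O(n^3)$, I would compute
\[
ex(n,K_{4,4}^4) = \frac{27}{64}\cdot\frac{n^4}{24} + o(n^4) = \frac{9}{512}n^4 + o(n^4),
\]
since $27/(64\cdot 24)=27/1536=9/512$. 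The error term $o(n^4)$ absorbs both the $o(\binom{n}{4})$ term from the definition of the density and the $O(n^3)$ lower-order contributions from expanding $\binom{n}{4}$.

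There is essentially no obstacle here: the two nontrivial ingredients (the Lagrangian density computation in Theorem \ref{theoremM^4_2} and the reduction to $M_2^4$-freeness for dense hom-free hypergraphs in the proof of Theorem \ref{turannumber1}) have already been carried out. The only thing to double-check is the arithmetic conversion between density and edge count, which I have verified above.
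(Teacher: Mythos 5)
Your proposal is correct and is exactly the argument implicit in the paper: the corollary follows directly from Theorem \ref{turannumber1} together with the definition of Tur\'an density, and the arithmetic $\frac{27}{64}\cdot\frac{1}{24}=\frac{9}{512}$ checks out.
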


\subsection{Stability}

In order to prove Theorem \ref{theorem}, we will first prove the following stability result.

\begin{theo}\label{theostability}
For any $\varepsilon >0$, there exists $\delta >0$ and an integer $n_0$ such that if $\mathcal{F}$ is a $K_{4,4}^4$-free $4$-graph with $n\ge n_0$ vertices and at least $({9 \over 512}-\delta)n^4 $ edges, then there exists a partition $V(\mathcal{F})=A \cup B $ of the vertex set of $\mathcal{F}$ such that $|\{e \in \mathcal{F}: |e \cap A| \ge 2\}|+|\{e \in \mathcal{F}: e \subseteq B\}| < \varepsilon n^4 $.
\end{theo}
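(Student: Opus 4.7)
The strategy is a codegree-based reduction using $K_{4,4}^4$-freeness to isolate a ``small part'' $A$ of size $\approx n/4$, recovering the bipartition of $S^4(n)$ up to small error. We fix constants with $\delta\ll\eta\ll\varepsilon$.

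Define the $2$-uniform auxiliary graph
$$
\mathcal{P}=\bigl\{\{u,v\}\subseteq V(\mathcal{F}) : d_{\mathcal{F}}(u,v)<\eta n^2\bigr\}
$$
of low-codegree pairs. The first step is a standard cleaning: delete all edges through any pair in $\mathcal{P}$, losing at most $\binom{n}{2}\cdot\eta n^2=O(\eta n^4)$ edges, and then iteratively delete vertices of degree below $(9/128-\sqrt{\delta})n^3$ (only $o(n)$ can be removed since $\sum_v d(v)=4|\mathcal{F}|\ge(9/128-4\delta)n^4$). The resulting subgraph $\mathcal{F}_1$ has $(1-o(1))n$ vertices, minimum degree $\Omega(n^3)$ and $|\mathcal{F}_1|\ge(9/512-O(\eta))n^4$.

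The structural consequence of $K_{4,4}^4$-freeness is: for every pair of disjoint edges $e,f\in\mathcal{F}_1$, at least one of the $16$ cross-pairs $\{a,b\}$ (with $a\in e$, $b\in f$) belongs to $\mathcal{P}$. Indeed, if each cross-pair had codegree $\ge\eta n^2$ in $\mathcal{F}$, a greedy procedure selects, for each cross-pair $\{a,b\}$, an edge $\{a,b,x_{ab},y_{ab}\}\in\mathcal{F}$ whose $32$ extra vertices are pairwise distinct and outside $e\cup f$: at each of the $16$ steps at most $38n$ of the $\ge\eta n^2$ edges through $\{a,b\}$ are blocked by previously used vertices, so a valid choice exists. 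This embeds $K_{4,4}^4$ into $\mathcal{F}$, contradicting the hypothesis.

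The final step is to extract a near-clique $A\subseteq V(\mathcal{F})$ of $\mathcal{P}$ with $|A|=(1/4\pm o(1))n$: the $\Theta(n^8)$ disjoint edge pairs of $\mathcal{F}_1$ (which exist because $|\mathcal{F}_1|\gg n^3$) each contribute a $\mathcal{P}$-edge, and a Tur\'an-type count on $\mathcal{P}$ combined with the near-extremal edge count of $\mathcal{F}$ pins $|A|$ at $\approx n/4$. Setting $B=V(\mathcal{F})\setminus A$, edges $e$ with $|e\cap A|\ge 2$ traverse a pair in $\mathcal{P}$, contributing at most $\binom{|A|}{2}\eta n^2=O(\eta n^4)$; edges contained in $B$ are bounded by subtracting the count of ``$1$-in-$A$, $3$-in-$B$'' edges (at most $|A|\binom{|B|}{3}\le 9n^4/512$) from $|\mathcal{F}|\ge(9/512-\delta)n^4$, yielding $O((\delta+\eta)n^4)$. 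For $\delta,\eta$ small in terms of $\varepsilon$, both bounds drop below $\varepsilon n^4/2$, as desired. The main obstacle is this extraction step: converting ``many $\mathcal{P}$-edges'' into ``$\mathcal{P}$ contains a clique of size $\approx n/4$'' is delicate and likely requires a local stability refinement of Theorem \ref{theoremM^4_2} applied to the link of a typical vertex, where the gap between $0.0169$ and $9/512$ forces the star-like local structure.
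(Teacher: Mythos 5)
Your approach is genuinely different from the paper's. You build an auxiliary graph $\mathcal{P}$ of low-codegree pairs, use $K_{4,4}^4$-freeness to derive that every disjoint pair of edges contributes a cross-pair to $\mathcal{P}$, and then try to extract a $\mathcal{P}$-clique $A$ of size $\approx n/4$. The paper instead runs a symmetrization procedure: it alternately deletes low-degree vertices (cleaning) and merges uncovered vertex pairs via cloning, shows the terminating graph restricted to the transversal is $M_2^4$-free, applies Theorem \ref{theoremM^4_2} to identify that restriction as a subgraph of a star, and then traces back through all the merge steps with a careful accounting of bad/very bad/worst edges to show the structure is preserved at every step. Your codegree approach, if completed, would be more direct; the paper's approach buys exact structure on a large subgraph for free.

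There are two genuine gaps in your proposal. First, the extraction step is not actually proven — as you yourself note. Having a $\mathcal{P}$-edge in every set of 16 cross-pairs between disjoint edges does not, by itself, produce a $\mathcal{P}$-clique of size $\approx n/4$; you would need to establish substantial structure about $\mathcal{P}$ (e.g.\ via a local stability version of Theorem \ref{theoremM^4_2} applied to links, or via the symmetrization machinery). Without this, the proof is incomplete at its central step.

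Second, and independently, your bound on $|\{e : e \subseteq B\}|$ does not follow from what you wrote. Subtracting the cap $|A|\binom{|B|}{3}\le 9n^4/512$ on ``$1$-in-$A$'' edges from the lower bound $|\mathcal{F}|\ge(9/512-\delta)n^4$ yields only a trivial lower bound (possibly negative) on the number of remaining edges; it does not give an upper bound on the edges contained in $B$. To control the all-in-$B$ edges you would need a separate argument — e.g.\ that an all-in-$B$ edge together with a disjoint ``$1$-in-$A$'' edge forces a $\mathcal{P}$-cross-pair inside $B$, which should be incompatible with pairs inside $B$ having high codegree — but that requires structure on $\mathcal{P}$ restricted to $B$ that the proposal never establishes. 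The paper sidesteps both issues by proving the stronger statement that $\mathcal{F}[V_t]$ is \emph{exactly} contained in $Q_{0,1}\times\binom{Q_{0,2}}{3}$, so the only error comes from the $o(n)$ deleted vertices.
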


We first show that it suffices to prove the result under the assumption that $\mathcal{F}$ is $K_{4,4}^4$-hom-free.
\begin{remark}
Given an $r$-graph $F$ and $p \ge |V(F)|$, $G$ is $H_p^F$-hom-free if and only if $G$ is ${\mathcal{K}}_p^F$-free.
\end{remark}
\begin{proof}
We first prove that if $G$ is $H_p^F$-hom-free then $G$ is ${\mathcal{K}}_p^F$-free. Otherwise suppose that $G$ contains a member of ${\mathcal{K}}_p^F$, say $K$.
By the definition of ${\mathcal{K}}_p^F$, $K$ contains a core $C$ of size $p$ such that $K[C]$ contains $F$ as a subgraph.
We now construct a map $f$ from $V(H_p^F)$ to $V(K)$. Map the core of $H_p^F$ to $C$, that is the core of $K$, such that $F$ in $H_p^F$ is mapped to a copy of $F$ in $K$.
For other vertices in $V(H_p^F)$, e.g. $B_{ij}$ such that $\{i,j\}\cup B_{ij} \in H_p^F$,
where $i,j$ are in the core of $H_p^F$, maps $B_{ij}$ to $B'_{ij}$, where $\{f(i),f(j)\}\cup B'_{ij} \in K$ (since each pair in $C$ is covered by an edge in $K$, $\{f(i),f(j)\}$ is contained in some edge of $K$). So $f$ is a homomorphism from $H_p^F$ to $K$, a contradiction.

Now we prove that if $G$ is ${\mathcal{K}}_p^F$-free then $G$ is $H_p^F$-hom-free.
Otherwise suppose that $G$ is not $H_p^F$-hom-free, that is, there is a homomorphism $g$ from $H_p^F$ to $G$.
Denote the core of $H_p^F$ as $C'$ and $C''=\{g(v):v\in C'\}$.
We first show that $|C''|=|C|=p$. Otherwise suppose that $|C''|<p$, then $\exists u,v \in C$ such that $g(u)=g(v)$, which contradicts that $g$ is a homomorphism, since $u, v$ is contained in some edges of $H_p^F$.
This implies that $F \subseteq G[C'']$. For every pair $x,y \in C$ that is not covered by $F$, fix one $(r-2)$-set as $B_{xy}$ such that $\{x, y\} \cup B_{xy} \in  H_p^F$.
Then  $\{g(w): w\in \{x, y\} \cup B_{xy}\}$ is an edge of $G$ for all pairs $x,y \in C$ that is not covered by $F$, since $g$ is a homomorphism.
Hence $\{\{g(v):v\in e\}: e\in H_p^F\}$ is a member of ${\mathcal{K}}_p^F$, a contradiction.
\qed
\end{proof}
\medskip

In Section 7 of \cite{BIJ}, Brandt-Irwin-Jiang proved that every $H_p^F$-free $r$-graph on $[n]$ can be made ${\mathcal{K}}_p^F$-free by removing $\Theta(n^{r-1})$ edges. Let $F=M_2^4$ and $p=8$, then it suffices to prove Theorem \ref{theostability} under the assumption that $\mathcal{F}$ is $K_{4,4}^4$-hom-free.

Part of our proof follows the approach in \cite{Pikhurko,HK} by Pikhurko and Hefetz-Keevash. We gradually adjust $\mathcal{F}$ by iterating a process which is called {\em Symmetrization}. This process consists of two parts: {\em Cleaning} is to delete vertices with `small' degree, and {\em Merging} is to replace the link of a vertex $v$ by the link of a vertex $u$ if $d(v)\le d(u)$ and the pair $u, v$ is not covered by an edge. It terminates if we can no longer clean any vertex. We show the terminating $4$-graph is isomorphic to $S^4(n')$ with $n'=(1-o(1))n$. Then we trace back and show that the symmetrization process is `stable' (does not change the $4$-graph much).

Now let us give the proof precisely. Clearly, we can also assume that $\varepsilon$ is sufficiently small and $\delta \ll \varepsilon$. Let $\alpha, \beta, \gamma$ and $\delta$ be real numbers satisfying $$1\gg \gamma \gg \beta \gg \alpha \gg \varepsilon \gg \delta \gg n_0^{-1}.$$

We operate the symmetrization process for a {\rm pointed} $4$-graph: by this we mean a triple $(\mathcal{G}, \mathcal{P}, U)$,
where $\mathcal{G}=(V,E)$ is a $4$-graph, $ \mathcal{P}=\{P_u:u\in U\}$ is a partition of $V$,
i.e. for any $v \in V$ there exists some $u \in U$ such that $v \in P_u$ and we give an order for vertices in $P_u$ for every $u \in U$,
and $U \subseteq V$ is a transversal of $ \mathcal{P}$ and every $u \in U$ is the representative of $P_u$. Let us describe the process precisely.
\medskip

\noindent{\bf Cleaning:}
\newline
\noindent{\bf Input:} {\em A pointed $4$-graph $(\mathcal{G},\mathcal{P},U)$ on $n$ vertices. }
\newline
\noindent{\bf Output:} {\em A pointed $4$-graph $(\mathcal{G}',\mathcal{P}',U')$ on $n' \le n$ vertices.}
\newline
\noindent{\bf Process:} {\rm If  $\delta(\mathcal{G}) \ge (9/128-\alpha)n^3$ or $V(\mathcal{G})=\emptyset$ then stop and return $(\mathcal{G}',\mathcal{P}',U')=(\mathcal{G},\mathcal{P},U)$, where $n' = |V(\mathcal{G}')|$. Otherwise, let $u \in U$ be an arbitrary vertex such that $d_{\mathcal{G}}(u)< (9/128-\alpha)n^3$. If $P_u=\{u\}$ then apply cleaning to $(\mathcal{G}- \{u\}, \mathcal{P}-\{P_u\},U-\{u\})$.
Otherwise let $v \in P_u$ be the vertex with the maximum order and apply cleaning to $(\mathcal{G}- \{v\},(\mathcal{P}-\{P_u\})\cup \{P_u-\{v\}\},U)$, where $n' = |V(\mathcal{G}')| <n$. }

Note that this algorithm will always terminate and $\delta(\mathcal{G}') \ge (9/128-\alpha)n'^3$ or $\mathcal{G}'$ is empty. A vertex set $U$ is {\em covered} by $\mathcal{G}$ if for every pair $u,v \in U$, $\{u,v\}$ is contained in an edge of $\mathcal{G}$.
\medskip

\noindent{\bf Merging:}
\newline
\noindent{\bf Input:} {\em A pointed $4$-graph $(\mathcal{G},\mathcal{P},U)$ on $n$ vertices. }
\newline
\noindent{\bf Output:} {\em  A pointed $4$-graph $(\mathcal{G}',\mathcal{P}',U')$ on the same vertex set of $\mathcal{G}$.}
\newline
{\bf Process:} {\rm If $U$ is covered by  $\mathcal{G}$ then stop and return $(\mathcal{G}',\mathcal{P}',U')=(\mathcal{G},\mathcal{P},U)$. Otherwise, let $u, v \in U$ be arbitrary vertices such that $\{u,v\}$ is not covered.
Assume that $d_{\mathcal{G}}(v) \le d_{\mathcal{G}}(u)$. Merge $P_v$ into $P_u$ (clone), that is, let $P'_u=P_u \cup P_v$  and $P'_w=P_w$ for every $w \in U\setminus \{u, v\}$.
Moreover, let $U'=U\setminus \{v\}$ and let $\mathcal{G}'$ be a blowup of $\mathcal{G'}[U']$ with partition sets $\{P'_w:w\in U'\}$, i.e., $E(\mathcal{G}')=\bigcup_{e\in \mathcal{G}[U']} \prod_{u\in e} P'_u$.
Suppose the order of $P_u$ is $u_1 \prec u_2 \prec \dots \prec u_k$ and the order of $P_v$ is $v_1 \prec v_2 \prec \dots \prec v_t$, then we give $P'_w$ an order as $u_1 \prec \dots \prec u_k \prec v_1 \prec \dots \prec v_t$.
Let $\mathcal{P}'=\{P'_w:w\in U'\}$ and return $(\mathcal{G}',\mathcal{P}',U')$.
}

Clearly, $ \mathcal{P}'=\{P'_u:u\in U'\}$ is a partition of $V(\mathcal{G}')$ satisfying for any $v \in V(\mathcal{G}')$ there exists some $u \in U'$ such that $v \in P'_u$, and $U' \subseteq V(\mathcal{G}')$ is a transversal of $ \mathcal{P}'$. Note that the merging processes at most one step and for every $u \in U$, $\forall v,w\in P_{u}$, we have $v \sim w$.
Now we are ready to describe the symmetrization process:
\medskip

\noindent{\bf Symmetrization:}
\newline
\noindent{\bf Input:} {\rm A $K_{4,4}^4$-free $4$-graph $\mathcal{F}=(V,E)$.}
\newline
\noindent{\bf Output:} {\rm A pointed $4$-graph $(\mathcal{F}^*,\mathcal{P},U)$.}
\newline
\noindent{\bf Initiation:} {\rm  Let $\mathcal{H}_0=\mathcal{F}=(V,E)$, $\mathcal{P}_0=\{P_{0,w}:w\in U_0\}$, $U_0=V$, where $P_{0,w}=\{w\}$ for every $w \in V$. And the order of every part $P_{0,w}$ which has only one single vertex is trivial. Set $i=0$.}
\newline
\noindent{\bf Iteration:}
{\rm Apply Cleaning to $(\mathcal{H}_i, \mathcal{P}_i, U_i)$ and let $\left(\mathcal{H}'_{i+1}, \mathcal{P}'_{i+1}, U'_{i+1}\right)$ be the output, where $\mathcal{H}'_{i+1}=(V'_{i+1},E'_{i+1})$, and $\mathcal{P}'_{i+1}=\{P'_{i+1,w}:w\in U'_{i+1}\}$.
 Apply Merging to $(\mathcal{H}'_{i+1}=(V'_{i+1},E'_{i+1}), \mathcal{P}'_{i+1}=\{P'_{i+1,w}:w\in U'_{i+1}\}, U'_{i+1})$ and let $(\mathcal{H}_{i+1}=(V_{i+1},E_{i+1}), \mathcal{P}_{i+1}=\{P_{i+1,w}:w\in U_{i+1}\}, U_{i+1})$ be the output.
If $(\mathcal{H}_{i+1}, \mathcal{P}_{i+1}, U_{i+1})=(\mathcal{H}'_{i+1}, \mathcal{P}'_{i+1}, U'_{i+1})$, then stop and return $(\mathcal{F}^*,\mathcal{P},U)=(\mathcal{H}_i, \mathcal{P}_i, U_i)$. Otherwise, increase $i$ by one and repeat Cleaning and Merging.
}
\medskip

Let $(\mathcal{F}^*, \mathcal{P}, U)$ be the output of applying Symmetrization to $\mathcal{F}$. Let
$$\mathcal{H}_0=\mathcal{F},\mathcal{H}'_1,\mathcal{H}_1,\dots,\mathcal{H}'_t,\mathcal{H}_t=\mathcal{F}^*$$
be the sequence of $4$-graphs produced during this process, where $\mathcal{H}'_i=(V'_i,E'_i)$ and $\mathcal{H}_i=(V_i, E_i)$ for every $1 \le i \le t$.

We split the proof into two stages. In the first stage we show that $\mathcal{F}^*$ is contained in a large blowup of $\mathcal{S}_k$, where $k\approx 3n/4$.
In the second stage we show that $\mathcal{F}[V_t]$ is a subgraph of a blowup of $\mathcal{S}_k$ (Just clone the vertex that all edges of $\mathcal{S}_k$ intersect for about $n/4$ copies). Then Theorem \ref{theostability} will follow easily from this.

We start with the first stage, which we prove by a series of lemmas.

\begin{lemma}\label{lemmas1}
The following properties hold for every $0 \le i \le t$.
\\{\rm (1)} For every $0\le j \le i$ the set $U_j\cap V_i$ is a transversal for the partition $\{P_{j,v}\cap V_i: v \in U_j\cap V_i\}$.
\\{\rm(2)}  $\mathcal{H}_i[U_i]= \mathcal{F}[U_i]$.
\\{\rm(3)} $|e \cap P_{j,v}|\le 1$ for every $e\in E_i$ and every $v \in V_i$.
\\{\rm(4)} For every $u \in U_i$, $\forall v,w\in P_{i,u}$, $v \sim w$.
\\{\rm(5)} For every $i\ge 1$, $U_i \subseteq U_{i-1}$ and $V_i \subseteq V_{i-1}$.
\end{lemma}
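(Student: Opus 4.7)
The plan is to prove (1)--(5) simultaneously by induction on $i$. The base case $i=0$ is immediate: $\mathcal{H}_0=\mathcal{F}$, $U_0=V(\mathcal{F})$, and every part $P_{0,w}=\{w\}$ is a singleton, so all five assertions hold trivially.

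For the inductive step from $i$ to $i+1$ I would dissect the iteration into its two sub-steps. First, I would record the auxiliary observation that the representative $u$ of $P_{i,u}$ is always the minimum-order element of its part, which follows by induction because initial parts are singletons and merging concatenates the incoming block behind the existing order. Consequently cleaning, which deletes either a singleton representative or the maximum-order vertex of a non-singleton part, never strips a non-singleton part of its representative; this gives $U'_{i+1}\subseteq U_i$, $V'_{i+1}\subseteq V_i$, and $P'_{i+1,v}=P_{i,v}\cap V'_{i+1}$ for each $v\in U'_{i+1}$. Merging then either terminates the iteration or unions one pair $P'_{i+1,v}$ into $P'_{i+1,u}$ (removing $v$ from the transversal) and replaces the hypergraph by the blowup of $\mathcal{H}'_{i+1}[U'_{i+1}\setminus\{v\}]$ along the new partition; in particular $V_{i+1}=V'_{i+1}$ and $U_{i+1}=U'_{i+1}\setminus\{v\}$, which together with the cleaning inclusions yields (5). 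Property (2) then follows from the chain $\mathcal{H}_{i+1}[U_{i+1}]=\mathcal{H}'_{i+1}[U_{i+1}]=\mathcal{H}_i[U_{i+1}]=\mathcal{F}[U_{i+1}]$, where the first equality is the blowup definition, the second is the effect of cleaning, and the third combines the induction hypothesis with $U_{i+1}\subseteq U_i$.

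For (1), the sets $\{P_{j,v}\cap V_{i+1}:v\in U_j\cap V_{i+1}\}$ are pairwise disjoint (inherited from $\mathcal{P}_j$) and each contains its own representative $v$. To see that their union is all of $V_{i+1}$, I would argue that any $x\in V_{i+1}$ lies in some unique block $P_{j,v}$ with $v\in U_j$, and that if $v$ had been cleaned away at some later stage then $v$'s current part would have had to first shrink to $\{v\}$, forcing $x$ itself to have been cleaned earlier, contradicting $x\in V_{i+1}$; hence $v\in V_{i+1}$. The crux is (3) and (4), and I would handle both via a short sublemma: restricted to $V_{i+1}$, the partition $\mathcal{P}_j$ refines $\mathcal{P}_{i+1}$ for every $0\le j\le i+1$, because cleaning only shrinks parts and merging only unions them. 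Consequently each $P_{j,v}\cap V_{i+1}$ lies inside a single block $P_{i+1,u}$. Since $\mathcal{H}_{i+1}$ is a blowup of $\mathcal{H}_{i+1}[U_{i+1}]$ along $\mathcal{P}_{i+1}$, every edge of $E_{i+1}$ meets each $P_{i+1,u}$ in at most one vertex, and hence meets each $P_{j,v}$ in at most one vertex, giving (3). The same blowup structure immediately yields (4): any two vertices of $P_{i+1,u}$ have identical links in $\mathcal{H}_{i+1}$ and are contained in no common edge, which is exactly the definition of $v\sim w$.

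The main obstacle I expect is the refinement sublemma above, where one has to verify that neither cleaning nor merging ever splits a previous block $P_{j,v}$ across two distinct blocks of $\mathcal{P}_{i+1}$. I would formalize this by a secondary induction on the iteration count, showing that each block of $\mathcal{P}_{i+1}$ is exactly a union of full blocks of $\mathcal{P}_j$ with any intermediate cleaning-deletions removed. Once that compatibility is in hand, all five properties drop out of standard blowup bookkeeping combined with the representative-ordering observation from the first paragraph.
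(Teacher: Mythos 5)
Your proof is correct and in fact more complete than the paper's. The paper declares (2)--(5) to hold ``obviously by the process'' and only proves (1) directly, whereas you establish all five simultaneously by a single induction on $i$, which is the cleaner route. Your auxiliary observation that the representative of a part is always its minimum-order element --- so that cleaning removes it only after every other member of its part --- is the key technical point, and it is worth noting that the paper's argument for (1) implicitly leans on exactly this fact without stating it: after observing that every $v\in V_i$ lies in $P_{j,u}\cap V_i$ for some $u\in U_j$, the paper concludes that $\bigcup_{u\in U_j\cap V_i}(P_{j,u}\cap V_i)=V_i$, but that step requires $u\in V_i$, i.e.\ that the representative itself survives cleaning, which is precisely what your ordering observation supplies. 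The blowup bookkeeping for (3) and (4) (an edge meets each block of $\mathcal{P}_{i}$ at most once, and vertices in a common block have identical links and no common edge, hence are equivalent), the chain of induced-subgraph equalities for (2), and the refinement compatibility you flag at the end (cleaning only shrinks blocks, merging only unions them, so $\mathcal{P}_j$ restricted to $V_i$ refines $\mathcal{P}_i$) are all correct and for the reasons you give.
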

\begin{proof}
By the process of symmetrization algorithm, properties (2)-(5) hold obviously.
Now we prove property (1). It is sufficient to show that $\{P_{j,v}\cap V_i: v \in U_j\cap V_i\}$ is a partition of $V_i$.
(i) For any distinct vertices $u,v \in U_j\cap V_i$, $(P_{j,u}\cap V_i)\cap (P_{j,v}\cap V_i)\subseteq P_{j,u}\cap P_{j,v}=\emptyset$.
(ii) Since $V_i \subseteq V_j$ by property (5) and $V_j=\bigcup_{w \in U_j}P_{j,w}$ (a partition), then $\forall v \in V_i$, $\exists u \in U_j$ such that $v \in P_{j,u}\cap V_i$. On the other hand, $\bigcup_{u \in U_j\cap V_i}(P_{j,u}\cap V_i)=(\bigcup_{u \in U_j\cap V_i}P_{j,u})\cap V_i\subseteq V_j\cap V_i=V_i$. So $\bigcup_{u \in U_j\cap V_i}(P_{j,u}\cap V_i)=V_i$
\qed
\end{proof}

\begin{lemma}\label{lemmas2}
Let $1 \le i\le t$ and suppose that $P'_{i,v}$ was merged into $P'_{i,u}$ during the $i$th merging step. Then
$P'_{i,u} \cap V(\mathcal{F}^*)=\emptyset$ implies $P'_{i,v} \cap V(\mathcal{F}^*)=\emptyset$.
\end{lemma}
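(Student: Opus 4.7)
The plan is to exploit the total order that the symmetrization algorithm maintains on each part. Immediately after the $i$th merging step, the combined part $P_{i,u}=P'_{i,u}\cup P'_{i,v}$ carries an order in which every vertex of $P'_{i,u}$ precedes every vertex of $P'_{i,v}$; write $P'_{i,u}=\{u_1\prec\dots\prec u_k\}$ and $P'_{i,v}=\{v_1\prec\dots\prec v_\ell\}$. Note that $u=u_1$ remains the representative of the combined part, while the representative $v=v_1$ of $P'_{i,v}$ is dropped from $U'_{i+1}$.

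Next I would record two structural features of the symmetrization process. First, once two vertices belong to a common part, they remain so throughout all remaining iterations: cleaning removes only individual vertices and merging only concatenates two parts, so no part is ever split. Second, the internal order within a surviving part is never reshuffled: a subsequent merge appends an entire part to the end of the existing order, while a cleaning step either deletes the maximum-order vertex of some part, or (in the special case where that part has shrunk to a singleton) deletes its sole remaining representative. Consequently, at every step $j\ge i$, the restriction of the current order to $(P'_{i,u}\cup P'_{i,v})\cap V_j$ is still $u_1\prec\dots\prec u_k\prec v_1\prec\dots\prec v_\ell$.

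With these observations in hand, I would argue by contrapositive: assume that some $y=v_s\in P'_{i,v}\cap V(\mathcal{F}^*)$, and deduce that every $u_r\in P'_{i,u}$ also lies in $V(\mathcal{F}^*)$. Suppose for contradiction that $u_r$ is deleted at some later cleaning step. If at that moment $u_r$ is not the representative of its part, then its removal would require $u_r$ to be the maximum-order vertex of that part; but $v_s$ is still alive by hypothesis, lies in the same part by the first observation, and is strictly greater than $u_r$ in the order by the second observation, a contradiction. The only alternative is that $u_r=u$ is removed as the sole remaining vertex of its part, which would force $v_s$ to have been deleted earlier, again contradicting $v_s\in V(\mathcal{F}^*)$. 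A minor subtlety I expect to manage is that $u$ may cease to be a representative after a later merge absorbs its part into another; even then $u$ can only be removed by the maximum-order rule, and since $u=u_1$ is the first element of its part, this is impossible while $v_s$ survives. This yields $P'_{i,u}\cap V(\mathcal{F}^*)\ne\emptyset$, which is the contrapositive of the claim.
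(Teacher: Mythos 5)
Your proposal is correct and follows essentially the same approach as the paper: both arguments hinge on the observation that cleaning always removes the maximum-order vertex of a part, so $u=u_1$ (the minimum-order element of $P'_{i,u}\cup P'_{i,v}$) must be the last of these vertices to be cleaned, hence if $u$ is gone, all of $P'_{i,v}$ is gone. The paper states this directly in one sentence, while you spell out the two supporting invariants (parts never split; internal orders are never reshuffled) and argue via the contrapositive, but the underlying mechanism is identical.
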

\begin{proof}
Suppose $P'_{i,u} \cap V(\mathcal{F}^*)=\emptyset$, this implies that $u$ has been cleaned in some $j$th merging step, where $i <j \le t$. Since $u$ is the last vertex among $P'_{i,v}\cup P'_{i,u}$ that be cleaned, so all vertices in $P'_{i,v}$ have been cleaned. Therefore, $P'_{i,v} \cap V(\mathcal{F}^*)=\emptyset$.
\qed
\end{proof}
\medskip

Since merging preserves the property of $K_{4,4}^4$-hom-freeness and deleting vertices certainly also does, we get the following lemma.
\begin{lemma}\label{lemmas3}
$\mathcal{H}_i$ is $K_{4,4}^4$-hom-free for every $0 \le i \le t$.
\end{lemma}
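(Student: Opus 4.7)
The plan is to prove Lemma~\ref{lemmas3} by induction on $i$, showing that each of the two sub-operations (cleaning and merging) in one symmetrization step preserves $K_{4,4}^4$-hom-freeness. For the base case $i=0$, recall that by the remark preceding the definition of symmetrization, $\mathcal{K}_8^{M_2^4}$-free and $H_8^{M_2^4}$-hom-free are equivalent properties, and by the reduction using the result of Brandt--Irwin--Jiang we may assume $\mathcal{F}=\mathcal{H}_0$ is $K_{4,4}^4$-hom-free. So the only work is to verify that each iteration preserves this property.

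For the inductive step, I would handle the two operations separately. The cleaning step producing $\mathcal{H}'_{i+1}$ from $\mathcal{H}_i$ only deletes vertices and the edges containing them; thus $\mathcal{H}'_{i+1}$ is a sub-hypergraph of $\mathcal{H}_i$, and any homomorphism $K_{4,4}^4 \to \mathcal{H}'_{i+1}$ composed with inclusion would give a homomorphism $K_{4,4}^4 \to \mathcal{H}_i$, contradicting the inductive hypothesis. The merging step is the substantive one: by its definition, $\mathcal{H}_{i+1}$ is literally the blowup of $\mathcal{H}'_{i+1}[U_{i+1}]$ with partition classes $\{P_{i+1,w}:w\in U_{i+1}\}$. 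Since $\mathcal{H}'_{i+1}$ is $K_{4,4}^4$-hom-free (from the previous paragraph) and induced sub-hypergraphs inherit hom-freeness, the base hypergraph $\mathcal{H}'_{i+1}[U_{i+1}]$ of the blowup is also $K_{4,4}^4$-hom-free. So I just need the general fact that a blowup of a $K_{4,4}^4$-hom-free $4$-graph is itself $K_{4,4}^4$-hom-free. For this, define the projection $\pi: V(\mathcal{H}_{i+1}) \to U_{i+1}$ sending each vertex of $P_{i+1,w}$ to its representative $w$; given any homomorphism $f:K_{4,4}^4\to \mathcal{H}_{i+1}$, the composition $\pi\circ f$ sends each edge $e\in K_{4,4}^4$ to the edge of $\mathcal{H}'_{i+1}[U_{i+1}]$ whose product set contains $f(e)$, hence $\pi\circ f$ is a homomorphism $K_{4,4}^4 \to \mathcal{H}'_{i+1}[U_{i+1}]$, contradicting hom-freeness of the latter.

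There is essentially no hard step here: the whole lemma is a structural bookkeeping statement that falls out of unwinding definitions once one identifies that merging outputs a blowup of an induced subgraph of the previous hypergraph. The only point requiring a bit of care is to check that the edge set formula $E(\mathcal{H}_{i+1})=\bigcup_{e\in \mathcal{H}'_{i+1}[U_{i+1}]}\prod_{u\in e}P_{i+1,u}$ indeed makes $\pi\circ f$ well-defined as a homomorphism -- that is, that $f$ maps the four vertices of each edge of $K_{4,4}^4$ into four distinct classes (which holds because the four images form an edge of $\mathcal{H}_{i+1}$ and the classes $P_{i+1,w}$ are disjoint, so an edge of the blowup has at most one vertex in each class by property~(3) of Lemma~\ref{lemmas1}). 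Note in particular that the condition that $\{u,v\}$ be uncovered (which is important for the Lagrangian-preservation arguments elsewhere) plays no role in this particular lemma: hom-freeness is preserved by merging regardless of whether $\{u,v\}$ is covered or not, because the merging output depends only on $\mathcal{H}'_{i+1}[U_{i+1}]$ and the blowup construction.
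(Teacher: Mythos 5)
Your proposal is correct and follows essentially the same argument the paper gives in one line: cleaning preserves hom-freeness because it passes to a sub-hypergraph, and merging preserves it because the output is a blowup of an induced subgraph, and blowups of $F$-hom-free graphs are $F$-hom-free. You have merely spelled out the projection step for the blowup that the paper leaves implicit, and your side remark that the uncovered-pair hypothesis is irrelevant to this particular lemma is accurate.
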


The next Lemma asserts that merging does not decrease the number of edges.
\begin{lemma}\label{lemmaM}
$e(\mathcal{H}_i) \ge e(\mathcal{H}'_i)$ holds for  every $1 \le i \le t$.
\end{lemma}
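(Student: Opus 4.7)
The plan is to establish this inequality through a direct edge-count argument that exploits a blowup structure enjoyed by $\mathcal{H}'_i$. The first step is to verify that $\mathcal{H}'_i$ is the blowup of its restriction $\mathcal{H}'_i[U'_i]$ with parts $\{P'_{i,w}\}$; this follows by induction on $i$, since $\mathcal{H}_0=\mathcal{F}$ is trivially a blowup with singleton parts, each merging step is defined so that its output is a blowup, and cleaning preserves the blowup structure because it only deletes vertices (a blowup restricted to a subset of vertices is still a blowup with appropriately shrunken parts). Consequently, the degree of any $x \in P'_{i,w}$ in $\mathcal{H}'_i$ depends only on $w$ and equals
$$D(w) \;:=\; \sum_{e \in \mathcal{H}'_i[U'_i],\, w \in e}\; \prod_{y \in e \setminus \{w\}} |P'_{i,y}|.$$

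Next, let $u, v \in U'_i$ be the pair selected by the merging step, so that $\{u,v\}$ is not covered in $\mathcal{H}'_i$ and $D(v) = d_{\mathcal{H}'_i}(v) \le d_{\mathcal{H}'_i}(u) = D(u)$. The uncovered condition guarantees that no edge of $\mathcal{H}'_i[U'_i]$ contains both $u$ and $v$. Partitioning the edges of $\mathcal{H}'_i[U'_i]$ according to their intersection with $\{u,v\}$ and writing $S_0$ for the blowup contribution $\sum \prod_{y \in e}|P'_{i,y}|$ from edges missing both of $u$ and $v$, I would obtain
$$e(\mathcal{H}'_i) \;=\; S_0 + |P'_{i,u}|\,D(u) + |P'_{i,v}|\,D(v), \qquad e(\mathcal{H}_i) \;=\; S_0 + \bigl(|P'_{i,u}| + |P'_{i,v}|\bigr)\,D(u),$$
where the second identity uses that $\mathcal{H}_i$ is by construction the blowup of $\mathcal{H}'_i[U'_i \setminus \{v\}]$ with $P_{i,u} = P'_{i,u} \cup P'_{i,v}$ and all other parts unchanged. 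Subtracting yields
$$e(\mathcal{H}_i) - e(\mathcal{H}'_i) \;=\; |P'_{i,v}|\bigl(D(u) - D(v)\bigr) \;\ge\; 0,$$
as required.

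The only step that is not essentially routine is the initial verification of the blowup structure of $\mathcal{H}'_i$, and even that amounts to elementary bookkeeping because the symmetrization procedure is engineered precisely to preserve this structure. Once the blowup representation is in hand, the entire proof collapses into the short computation above, which converts the merging criterion $d_{\mathcal{H}'_i}(v) \le d_{\mathcal{H}'_i}(u)$ directly into the claimed edge inequality. I do not expect any substantive obstacle.
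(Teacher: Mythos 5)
Your proof is correct. The paper states this lemma without proof (treating it as routine); your argument---observing that $\mathcal{H}'_i$ is a blowup of $\mathcal{H}'_i[U'_i]$, decomposing the edge count by intersection with $\{u,v\}$ (using that $\{u,v\}$ is uncovered so no edge meets both), and concluding $e(\mathcal{H}_i)-e(\mathcal{H}'_i)=|P'_{i,v}|\bigl(D(u)-D(v)\bigr)\ge 0$ from the merging criterion $d_{\mathcal{H}'_i}(v)\le d_{\mathcal{H}'_i}(u)$---is the standard and intended argument, and the inductive verification of the blowup structure (singleton parts at $i=0$, merging produces a blowup by definition, cleaning only deletes non-representative or whole-singleton-part vertices and hence preserves the blowup) is sound.
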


\begin{lemma}\label{lemmas1}
$\mathcal{F}[U_t]$ is $M_2^4$-free.
\end{lemma}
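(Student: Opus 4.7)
\medskip
\noindent\textbf{Proof plan.}
The plan is to contradict Lemma~\ref{lemmas3} (the $K_{4,4}^4$-hom-freeness of $\mathcal{H}_t = \mathcal{F}^*$) by exhibiting a homomorphism from $K_{4,4}^4 = H_8^{M_2^4}$ into $\mathcal{H}_t$ under the assumption that $\mathcal{F}[U_t]$ contains a $2$-matching. This is a local adaptation of the construction already used in the proof of Theorem~\ref{turannumber1}.

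First I would transfer the hypothetical obstruction. Suppose for contradiction that $\mathcal{F}[U_t]$ contains two disjoint edges $e, f$. Since $\mathcal{H}_t[U_t] = \mathcal{F}[U_t]$ by property (2) of the structural lemma above, $e$ and $f$ are also edges of $\mathcal{H}_t$, and $e \cup f \subseteq U_t$ by construction. Next I would invoke the termination of Symmetrization: the process halts only when the Merging step on the terminal pointed $4$-graph does nothing, which by the definition of Merging happens precisely when the representative set $U_t$ is covered by $\mathcal{H}_t$. Hence for each of the $16$ cross-pairs $\{a, b\}$ with $a \in e$, $b \in f$, there exists an edge $e_{ab} \in \mathcal{H}_t$ satisfying $\{a, b\} \subseteq e_{ab}$.

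Finally, mimicking the hom-construction from the proof of Theorem~\ref{turannumber1}, I would build a homomorphism $\varphi : V(K_{4,4}^4) \to V(\mathcal{H}_t)$ as follows. Label the core of $K_{4,4}^4$ by $\{v_1, \ldots, v_8\}$ so that its two $M_2^4$-edges are $v_1v_2v_3v_4$ and $v_5v_6v_7v_8$; send $\{v_1,\ldots,v_4\}$ bijectively to $e$ and $\{v_5,\ldots,v_8\}$ bijectively to $f$; and for each cross-pair $\{v_i, v_j\}$ with $i \le 4 < j$, send the auxiliary $2$-set $B_{ij}$ to $e_{ab} \setminus \{a, b\}$, where $a = \varphi(v_i)$ and $b = \varphi(v_j)$. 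Every edge of $K_{4,4}^4$ is then mapped to an edge of $\mathcal{H}_t$, producing the desired homomorphism and contradicting Lemma~\ref{lemmas3}.

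The main obstacle I anticipate lies in the second step, namely verifying that termination really forces $U_t$ to be covered by $\mathcal{H}_t$; this requires unpacking the Merging semantics (a merge can be performed exactly when an uncovered pair of representatives exists, and any merge strictly changes the pointed $4$-graph, so the only way for Symmetrization to terminate is by the absence of such a pair). Once this coverage property is secured, the homomorphism construction and the final contradiction are a direct reuse of the $K_{4,4}^4$-construction from Theorem~\ref{turannumber1}.
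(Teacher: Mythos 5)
Your proposal is correct and follows essentially the same route as the paper. The paper derives the covering of pairs from the termination of Merging, then observes that $\mathcal{F}[U_t]=\mathcal{F}^*[U_t]$ covers pairs (using additionally that $\mathcal{F}^*$ is a blowup of $\mathcal{F}^*[U_t]$, so any covering edge of $\mathcal{F}^*$ projects to one inside $U_t$), and concludes that $\{e,f\}\cup\{e_{ab}:a\in e,\, b\in f\}$ is a member of $\mathcal{K}_8^{M_2^4}$ lying inside $\mathcal{F}[U_t]\subseteq\mathcal{F}$, contradicting that $\mathcal{F}$ itself is $K_{4,4}^4$-hom-free. You instead contradict Lemma~\ref{lemmas3} for $\mathcal{H}_t=\mathcal{F}^*$ by explicitly exhibiting the homomorphism $K_{4,4}^4\to\mathcal{H}_t$, taking the covering edges $e_{ab}$ in $\mathcal{H}_t$ itself; this sidesteps the need to push the covering edges down into $U_t$ via the blowup structure, so your version is marginally leaner. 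Both arguments hinge on the same two inputs — that termination of Merging forces the representative set to be covered, and that the terminal graph inherits $K_{4,4}^4$-hom-freeness — so the substance is identical.
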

\begin{proof}
Assume for the sake of contradiction that there exist two disjoint edges $e,f$ in $\mathcal{F}[U_t]$. Note that $\mathcal{F}[U_t]=\mathcal{F^*}[U_t]$ and $\mathcal{F}[U_t]$ cover pairs, then $\forall a\in e$ and $\forall b \in f$, $\exists e_{ab}\in \mathcal{F}[U_t]$ such that $a,b \in e_{ab}$. Hence $\{e,f\} \cup \{e_{ab}:a \in e, b\in f\} \subseteq \mathcal{F}[U_t] \subseteq \mathcal{F}$, which contradicts that $\mathcal{F}$ is $K_{4,4}^4$-hom-free.
\qed
\end{proof}

The following proposition follows immediately from the definition and is implicit in many papers (see \cite{Keevash} for instance). We include a short proof of it for completeness.
\begin{prop}\label{propblowup}
Let $F$ be an $r$-graph. Let $L$ be an $F$-free $r$-graph. Let $H$ be a blow-up of $L$ with $n$ vertices. Then $|H|\le {\pi_{\lambda}(F) \over r!}n^r$. In particular, $|\mathcal{F}^*|\le {9 \over 512} n^4$.
\end{prop}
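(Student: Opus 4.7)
The plan is to exploit the fact that the Lagrangian of a hypergraph is preserved under taking blow-ups (Corollary \ref{coroblowup}) and that the number of edges of a blow-up equals $n^r$ times the value of the Lagrangian function at a specific feasible weighting — namely, the normalized part sizes. Combined with $L$ being $F$-free, this yields $r!\lambda(L)\le \pi_\lambda(F)$, and rearranging gives the claimed bound on $|H|$.

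Concretely, write $V(L)=[m]$ and let $V_1,\dots,V_m$ be the parts of the blow-up $H$ with $|V_i|=n_i$ and $\sum_{i=1}^m n_i=n$. By the definition of a blow-up,
$$|H|=\sum_{e\in L}\prod_{i\in e}|V_i|=\sum_{e\in L}\prod_{i\in e}n_i.$$
Setting $x_i=n_i/n$ produces a feasible weighting $\vec{x}\in\Delta$, and pulling out the factor of $n^r$ gives $|H|=n^r\lambda(L,\vec{x})$. Since $\vec{x}$ is feasible we have $\lambda(L,\vec{x})\le\lambda(L)$, and since $L$ is $F$-free the definition of Lagrangian density gives $r!\lambda(L)\le\pi_\lambda(F)$. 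Combining these yields $|H|\le \frac{\pi_\lambda(F)}{r!}n^r$.

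For the particular statement, observe that by the construction in the Merging step the final $4$-graph $\mathcal{F}^*=\mathcal{H}_t$ is a blow-up of $\mathcal{H}_t[U_t]$ with partition sets $\{P_{t,w}:w\in U_t\}$, and by Lemma \ref{lemmas1}(2) we have $\mathcal{H}_t[U_t]=\mathcal{F}[U_t]$. The previous lemma shows $\mathcal{F}[U_t]$ is $M_2^4$-free, so taking $F=M_2^4$, $L=\mathcal{F}[U_t]$ and $H=\mathcal{F}^*$ in the first part, together with the corollary $\pi_\lambda(M_2^4)=27/64$, gives
$$|\mathcal{F}^*|\le\frac{\pi_\lambda(M_2^4)}{4!}\,n^4=\frac{27/64}{24}n^4=\frac{9}{512}n^4.$$
There is no substantive obstacle here: the content of the proposition is essentially the observation that the Lagrangian function, evaluated at the rational point given by normalized part sizes, records the edge density of a blow-up exactly, so the density bound for $F$-free hypergraphs transfers verbatim to density bounds for blow-ups of $F$-free hypergraphs.
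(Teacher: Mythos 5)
Your proof is correct and follows essentially the same idea as the paper: both arguments observe that $|H|/n^r$ is the value of a Lagrangian function at a specific feasible weighting and then bound this by $\pi_\lambda(F)/r!$. The only cosmetic difference is that the paper puts the uniform weighting $x_i=1/n$ on $H$ and invokes Corollary \ref{coroblowup} to pass to $\lambda(L)$, whereas you put the normalized part sizes $n_i/n$ directly on $L$ (so Corollary \ref{coroblowup} is not actually needed in your version, despite being mentioned in your plan).
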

\begin{proof}
Let $\vec{x}=(x_1,x_2,\dots,x_n)$ be a weighting of $H$, where $x_i={1 \over n}$ for all $i\in [n]$. Then
$${|H| \over n^r}= \lambda(H,\vec{x}) \le \lambda(H)=\lambda(L)\le \pi_{\lambda}(F)/r!,$$
where $\lambda(H)=\lambda(L)$ is from Corollary \ref{coroblowup}.
Therefore $|H|\le {\pi_{\lambda}(F) \over r!}n^r$.
Since $\mathcal{F}^*$ is a blowup of $\mathcal{F}^*[U_t]$ and $\mathcal{F}^*[U_t]$ is an $M_2^4$-free $4$-graph by Lemma \ref{lemmas1}, then
$|\mathcal{F}^*|\le {\pi_{\lambda}(M_2^4)n^4 \over 4!} = {9 \over 512} n^4$.
\qed
\end{proof}
\medskip

Let $C_i$ be the vertex set deleted by the $i$th cleaning, where $0 \le i \le t-1$, i.e. $C_i=V_{i}\setminus V_{i+1}'$. Let $C=\bigcup_{i=0}^{t-1}C_i$, so $C= V\setminus V_t$ is the set of vertices removed by the symmetrization algorithm.
The next lemma asserts that the symmetrization process does not delete too many vertices.
\begin{lemma}\label{lemmav}
$|V(\mathcal{F}^*)| \ge (1-\alpha)n$.
\end{lemma}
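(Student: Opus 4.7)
The plan is to bound $e(\mathcal{F}^*)$ above and below, then exploit the gap to bound $c := n - |V(\mathcal{F}^*)|$. For the upper bound, Lemma \ref{lemmas1} gives that $\mathcal{F}[U_t]$ is $M_2^4$-free, and since $\mathcal{F}^*$ is a blowup of $\mathcal{F}[U_t]$, Proposition \ref{propblowup} yields $e(\mathcal{F}^*) \le \frac{9}{512}(n-c)^4$.

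For the lower bound, I would invoke Lemma \ref{lemmaM}: merging never decreases the number of edges, so edge loss happens only during cleaning. The key refinement is to not use the blanket bound ``each deletion removes fewer than $(9/128-\alpha)n^3$ edges'', but instead to track the shrinking vertex count. Ordering the $c$ cleaning deletions chronologically, the $i$-th deletion occurs in a graph on exactly $n-i+1$ vertices and removes strictly fewer than $(9/128 - \alpha)(n-i+1)^3$ edges. Using the elementary estimate $\sum_{i=1}^{c}(n-i+1)^3 \le (n^4 - (n-c)^4)/4 + O(n^3)$, the total edges lost to cleaning are at most $(9/512 - \alpha/4)(n^4 - (n-c)^4) + O(n^3)$, so
$$
e(\mathcal{F}^*) \ge \left(\frac{9}{512} - \delta\right) n^4 - \left(\frac{9}{512} - \frac{\alpha}{4}\right)\!\left(n^4 - (n-c)^4\right) - O(n^3).
$$

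Comparing the two bounds on $e(\mathcal{F}^*)$, the terms $\frac{9}{512}(n-c)^4$ and $\frac{9}{512}n^4$ cancel cleanly, leaving $\frac{\alpha}{4}(n^4 - (n-c)^4) \le \delta n^4 + O(n^3)$. Hence $(n-c)^4 \ge n^4(1 - 4\delta/\alpha - o(1))$, and taking a fourth root gives $n - c \ge n(1 - \delta/\alpha - o(1))$. Since the hierarchy $\alpha \gg \varepsilon \gg \delta$ allows us to take $\delta \ll \alpha^2$, we conclude $c \le (\delta/\alpha + o(1))n < \alpha n$ for all $n \ge n_0$ sufficiently large, which is exactly $|V(\mathcal{F}^*)| \ge (1-\alpha)n$.

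The main obstacle, which the plan is specifically designed to overcome, is the telescoping estimate for the cleaning loss. A naive bound using the initial vertex count $n$ throughout yields only a dichotomy ``$c$ is tiny or $c$ is of order $\Omega(\alpha n)$'', and the latter branch is not ruled out when $\alpha$ is a small constant. Tracking the fact that the current vertex count (and hence the threshold $(9/128 - \alpha)n_{c}^3$) shrinks as cleaning proceeds is precisely what converts the weak estimate into the sharp bound $c \le \alpha n$.
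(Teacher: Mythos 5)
Your proposal is correct and follows essentially the same approach as the paper's proof: both combine the upper bound $e(\mathcal{F}^*) \le \frac{9}{512}(n-s)^4$ from Proposition \ref{propblowup} with the lower bound obtained by telescoping the shrinking cleaning threshold $\sum_{i}(9/128-\alpha)(n-i)^3 \approx \frac{1}{4}(9/128-\alpha)(n^4-(n-s)^4)$, then cancel the leading $\tfrac{9}{512}$ terms to get $\tfrac{\alpha}{4}(n^4-(n-s)^4) \lesssim \delta n^4$ and conclude $s < \alpha n$ using $\delta \ll \alpha^2$. The discussion of the "naive bound" dichotomy accurately describes why the shrinking-threshold telescoping is the step that makes the argument close, though the paper itself also uses this refined bound rather than the naive one.
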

\begin{proof}
Write $s=|C|$. By Lemma \ref{lemmaM} and the definition of cleaning we have
\begin{eqnarray*}
|\mathcal{F}^*| &\ge& |\mathcal{F}|-\sum_{i=0}^{s-1}\left(9/128-\alpha\right)(n-i)^3 \\
&\ge& \left(9/512-\delta\right)n^4 - (\left(9/512-\alpha/4\right)(n^4-(n-s)^4)+\delta n^4)
\end{eqnarray*}
since
$$\sum_{i=0}^{s-1}(n-i)^3=\sum_{i=1}^{n}i^3-\sum_{i=1}^{n-s}i^3={1\over 4}n^2(n+1)^2-{1\over 4}(n-s)^2(n-s+1)^2\ge {1\over 4}n^4-{1\over 4}(n-s)^4-\delta n^4.$$
On the other hand, since $\mathcal{F}^*$ is $K_{4,4}^4$-hom-free, then by Proposition \ref{propblowup} we have
$$|\mathcal{F}^*| \le 9/512(n-s)^4.$$
This yields
$$ {\alpha \over 4}(n-s)^4 \ge ({\alpha \over 4} -2\delta)n^4. $$
Hence
$$ \left({n-s \over n}\right)^4 \ge {\alpha/4-2\delta \over \alpha/4} > 1-\alpha$$
for $\delta \ll \alpha$. Hence
$${n-s \over n} > (1-\alpha)^{1/4} \ge 1-\alpha.$$
So $s < \alpha n$ and $|V(\mathcal{F}^*)| \ge (1-\alpha)n$.
\qed
\end{proof}
\medskip

\begin{lemma}\label{lemmasf}
$\mathcal{F}^*[U_t]\subseteq \mathcal{S}_{|U_t|}$. Let $1$ be the vertex in $U_t$ that intersects all edges of $\mathcal{F}^*[U_t]$, then $|P_{t,1}|=({1 \over 4}\pm \beta)|V(\mathcal{F}^*)|$.
\end{lemma}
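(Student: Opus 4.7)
The plan is to exploit three facts already in hand: $\mathcal{F}^*$ is a blowup of $\mathcal{G} := \mathcal{F}^*[U_t]$ along the partition $\mathcal{P}$; $\mathcal{G}$ is $M_2^4$-free (Lemma \ref{lemmas1}); and the Symmetrization process has kept $|\mathcal{F}^*|$ close to the maximal value $(9/512)n^4$. From these I will read off, via Theorem \ref{theoremM^4_2}, that $\mathcal{G}$ is contained in a star, and then pin down the size of the central part by a one-variable optimization.

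First I set $n' := |V(\mathcal{F}^*)|$ and define the feasible weighting $\vec{x} = (x_u)_{u \in U_t}$ by $x_u := |P_{t,u}|/n'$; since $\mathcal{F}^*$ is the blowup of $\mathcal{G}$ along $\mathcal{P}$, one has $\lambda(\mathcal{G}, \vec{x}) = |\mathcal{F}^*|/n'^4$. The next step is a lower bound $\lambda(\mathcal{G}, \vec{x}) \ge 9/512 - O(\delta)$. Revisiting the bookkeeping used in the proof of Lemma \ref{lemmav}, each cleaning step removes at most $(9/128 - \alpha)(n-i)^3$ edges and merging never decreases edges (Lemma \ref{lemmaM}). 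Writing $s = n - n' \le \alpha n$ and using $\sum_{i=0}^{s-1}(n-i)^3 = (n^4 - n'^4)/4 + O(n^3)$, one obtains
\[
|\mathcal{F}^*| \;\ge\; (9/512 - \delta)\, n^4 \;-\; \frac{9/128 - \alpha}{4}(n^4 - n'^4) \;-\; O(n^3) \;\ge\; (9/512)\, n'^4 \;-\; 2\delta n^4,
\]
and dividing by $n'^4 \ge (1-\alpha)^4 n^4$ yields $\lambda(\mathcal{G}, \vec{x}) \ge 9/512 - O(\delta)$.

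Since $\mathcal{G}$ is $M_2^4$-free and $\lambda(\mathcal{G}) \ge \lambda(\mathcal{G}, \vec{x}) > 0.0169$ (because $9/512 \approx 0.01758$ and $\delta$ is negligible), Theorem \ref{theoremM^4_2} forces $\mathcal{G} \subseteq \mathcal{S}_{|U_t|}$; call the apex vertex $1$. Because every edge of $\mathcal{G}$ contains $1$, expanding $\bigl(\sum_{i \ne 1} x_i\bigr)^3$ gives
\[
\lambda(\mathcal{G}, \vec{x}) \;\le\; x_1 \sum_{\{i,j,k\} \subseteq U_t \setminus \{1\}} x_i x_j x_k \;\le\; \frac{x_1(1-x_1)^3}{6}.
\]
The function $f(t) = t(1-t)^3/6$ on $[0,1]$ has $f(1/4) = 9/512$ as its unique, non-degenerate maximum (with $f''(1/4) = -3/8$), so there is an absolute constant $c > 0$ such that $f(t) \le 9/512 - c(t - 1/4)^2$ throughout $[0,1]$. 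Combining with the previous lower bound gives $(x_1 - 1/4)^2 \le O(\delta)$, hence $|x_1 - 1/4| = O(\sqrt{\delta}) < \beta$ under the hierarchy of constants (one chooses $\delta$ small enough that $\sqrt{\delta} \ll \beta$). Translated back, this is exactly $|P_{t,1}| = (1/4 \pm \beta)\, n'$.

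The main obstacle I expect is the lower bound in the first display: one must verify that the edges deleted during cleaning only eat into the ``slack'' $n^4 - n'^4$ (up to lower-order terms), rather than into $n'^4$ itself, so that the ratio $|\mathcal{F}^*|/n'^4$ stays within $O(\delta)$ of $9/512$. This relies on the constant $9/128$ in the cleaning threshold matching the derivative $4 \cdot (9/512) = 9/128$ of $9/512 \cdot t^4$ at $t=1$, which is essentially why $\alpha$ was chosen the way it was. Everything after that is a direct invocation of Theorem \ref{theoremM^4_2} and an elementary analysis of $t(1-t)^3$ on $[0,1]$.
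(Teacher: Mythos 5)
Your proof is correct, and the first part (the containment $\mathcal{F}^*[U_t]\subseteq \mathcal{S}_{|U_t|}$) follows essentially the same strategy as the paper: put the weighting $x_u=|P_{t,u}|/|V_t|$ on $\mathcal{F}^*[U_t]$, observe $\lambda(\mathcal{F}^*[U_t],\vec{x})=|\mathcal{F}^*|/|V_t|^4$, lower bound this above $0.0169$, and invoke Theorem \ref{theoremM^4_2}. The difference is in how you lower bound $|\mathcal{F}^*|$: the paper simply uses the post-cleaning minimum degree $\delta(\mathcal{F}^*)\ge(9/128-\alpha)|V_t|^3$, yielding $\lambda\ge 9/512-\alpha/4$, whereas you re-run the edge bookkeeping of Lemma \ref{lemmav} to get the sharper $\lambda\ge 9/512-O(\delta)$. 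Both suffice to cross $0.0169$, but your sharper bound is not wasted: it is what powers your genuinely different argument for the second part. The paper proves $|P_{t,1}|=(1/4\pm\beta)|V(\mathcal{F}^*)|$ by a direct degree count on the two sides (a vertex of $A$ has degree at most $\binom{|B|}{3}$, a vertex of $B$ has degree at most $|A|\binom{|B|}{2}$, and both are too small if $|A|$ is off by more than $\beta$), reusing the $(9/128-\alpha)$ min-degree condition and only needing $\beta\gg\alpha$. You instead upper bound $\lambda(\mathcal{F}^*[U_t],\vec{x})\le x_1(1-x_1)^3/6$ and use the strict concavity of $f(t)=t(1-t)^3/6$ at its unique maximizer $t=1/4$ (with $f''(1/4)=-3/8$) to conclude $|x_1-1/4|=O(\sqrt{\delta})$. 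This is cleaner and gives a stronger conclusion, but note it would not work with the paper's weaker bound $\lambda\ge 9/512-\alpha/4$ (that would only give $|x_1-1/4|=O(\sqrt{\alpha})$, which the hierarchy $\alpha\ll\beta$ does not control); you correctly flag that the hierarchy must be tightened to $\sqrt{\delta}\ll\beta$, which is harmless since $\delta$ is chosen last. You should also be aware that the global inequality $f(t)\le 9/512-c(t-1/4)^2$ on $[0,1]$ needs $c\le 1/32$ (the constraint at $t=1$), not merely $c<3/16$ from the second derivative at $1/4$, but such a $c$ does exist, so your argument goes through.
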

\begin{proof}
Suppose $U_t=[m]$. Let $\vec{x}=\{x_1,\dots,x_m\}$ be a weighting of $\mathcal{F}^*[U_t]$ such that $x_i= {|P_{t,i}| \over |V_t|}$ for every $i\in [m]$. So $x_i\ge 0$ and  $\sum_{i=1}^mx_i=1$. Then
\begin{eqnarray*}
\lambda(\mathcal{F}^*[U_t],\vec{x})
= \sum_{e \in \mathcal{F}^*[U_t]}\prod\limits_{i\in e}{|P_{t,i}| \over |V_t|}
= {1 \over |V_t|^4} e(\mathcal{F}^*).
\end{eqnarray*}
Since $d_{\mathcal{F}^*}(x)\ge ({9 \over 128}-\alpha)|V_t|^3$ for every $x \in V(\mathcal{F}^*)$,
so $e(\mathcal{F}^*) \ge ({9 \over 512}-{\alpha \over 4})|V_t|^4$. Then
$\lambda(\mathcal{F}^*[U_t],\vec{x})= {1 \over |V_t|^4} e(\mathcal{F}^*)\ge {9 \over 512}-{\alpha \over 4}$. Since $\alpha$ is small enough, then by Theorem \ref{theoremM^4_2},
 we have $\mathcal{F}^*[U_t]\subseteq \mathcal{S}_{|U_t|}$. This proves the first part.

For the second part, suppose that $|P_{t,1}|\neq({1 \over 4}\pm \beta)|V(\mathcal{F}^*)|$. Denote $A=P_{t,1}$ and $B=V(\mathcal{F}^*)\setminus A$.

{\em Case 1.} $|A|>({1 \over 4}+ \beta)|V(\mathcal{F}^*)|$. Then $|B|<({3 \over 4}- \beta)|V(\mathcal{F}^*)|$.
Then for any $v \in A$, $d_{\mathcal{F}^*}(v)\le {({3 \over 4}- \beta)|V(\mathcal{F}^*)| \choose 3}\le \left({9 \over 128}-{9 \over 33}\beta\right)|V(\mathcal{F}^*)|^3$, which contradicts that $\delta(\mathcal{F}^*)\ge \left({9 \over 128}-\delta\right)|V(\mathcal{F}^*)|^3$.

{\em Case 2.} $|A|<({1 \over 4}- \beta)|V(\mathcal{F}^*)|$. Suppose that $|A|=({1 \over 4}- \mu)|V(\mathcal{F}^*)|$,
so $|B|=({3 \over 4}+ \mu)|V(\mathcal{F}^*)|$, where $\mu > \beta$. Then for any $v \in B$, $d_{\mathcal{F}^*}(v)\le ({1 \over 4}- \mu)|V(\mathcal{F}^*|\cdot{({3 \over 4}+\mu )|V(\mathcal{F}^*)| \choose 2} = {1\over 2}\left({9 \over 64}-{3 \over 16}\mu-{5 \over 4}\mu^2-\mu^3\right)|V(\mathcal{F}^*)|^3
\le \left({9 \over 128}-{3 \over 16}\beta\right)|V(\mathcal{F}^*)|^3$, a contradiction.
\qed
\end{proof}

\medskip

This completes the first stage of the proof. The second stage is to show that $\mathcal{F}[V_t]$ is a subgraph of a blowup of $\mathcal{S}_{|U_t|}$, that is, $\mathcal{F}[V_t] \subseteq A'\times {B' \choose 3}$ for some $\{A', B'\}$ which is a partition of $V_t$ satisfying $|A'|\approx |B'|/3$.
To do so, we will trace back the Merging steps performed during symmetrization.

Recall that $1$ is the vertex in $U_t$ that intersects all edges of $\mathcal{F}^*[U_t]$. Let $A$ be the set $P_{t,1}$ and $B=V(\mathcal{F}_t)\setminus A$.
By Lemma \ref{lemmasf} we have $V_t=A\cup B$ with $|A|\thickapprox {1 \over 3}|B|$.  For every $0 \le i \le t$ we will find a partition $\mathcal{Q}_i=\{Q_{i,j}: j \in [2]\}$ of $V_t$ which satisfies the following properties:
\\(P1) $1 \in Q_{i,1}$;
\\(P2) For every $v \in U_t$, $P_{i,v}\cap V_t \subseteq Q_{i,1}$ or $P_{i,v}\cap V_t \subseteq Q_{i,2}$;
\\(P3) For every $e \in \mathcal{H}_i[V_t]$ we have $|e\cap Q_{i,1}|=1$ ( so $|e\cap Q_{i,2}|=3$), that is, $\mathcal{H}_i[V_t] \subseteq Q_{i,1}\times {Q_{i,2} \choose 3}$.
\medskip

Set $Q_{t,1}=A$ and $Q_{t,2}=B$. It follows by Lemma \ref{lemmasf} that $\mathcal{Q}_t=\{Q_{t,1}, Q_{t,2}\}$ satisfies (P1)-(P3).
Assume that we have already found a partition $\mathcal{Q}_i$ which satisfies (P1)-(P3) for some $i \in [t]$, now we will find a partition $\mathcal{Q}_{i-1}$ with the desired properties.

 Write
$$m=|V_t|, \:\: \mathcal{G}_i=\mathcal{H}_i[V_t], \:\: \mathcal{G}_{i-1}=\mathcal{H}_{i-1}[V_t] \:\: {\rm and} \:\: \mathcal{B}_i=Q_{i,1}\times {Q_{i,2} \choose 3}.$$
 We first prove the following Lemma which is vital for finding $\mathcal{Q}_{i-1}$ with the desired properties.
\begin{lemma}\label{lemmaspliting}
Let $0\le i\le t$.
\\{\em (i)} $\delta(\mathcal{G}_i)\ge ({9 \over 128}-2\alpha)m^3$, so $e(\mathcal{G}_i)\ge ({9 \over 512}-{1 \over 2}\alpha)m^4$.

 If a partition $\mathcal{Q}_i=\{Q_{i,1}, Q_{i,2}\}$ of $V_t$ satisfies (P1)-(P3), then
\\{\em (ii)} $|Q_{i,1}|=({1\over 4}\pm \beta)m$, so $|Q_{i,2}|=({3\over 4}\pm \beta)m$ and
\\{\em (iii)} $d_{\mathcal{B}_i \setminus \mathcal{G}_i}(x)\le \gamma m^3$ for all $x \in V_t$.
\end{lemma}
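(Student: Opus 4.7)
The three parts are proved in order, with the central observation that every vertex of $V_t$ survives every cleaning phase and therefore inherits the cleaning-threshold degree bound at each intermediate stage.

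\emph{Part (i).} The plan is first to show $d_{\mathcal{H}_i}(x) \ge (9/128 - \alpha)m^3$ for all $x \in V_t$ and all $0 \le i \le t$. Since $V_t \subseteq V_{j+1}' = V_{j+1}$ for every $j$ (merging preserves vertex sets, and $V_t$ is preserved through every subsequent cleaning), for $i < t$ the $(i+1)$th cleaning only removes vertices in $V_i \setminus V_t$. Because vertex deletion can only remove edges incident to deleted vertices, $d_{\mathcal{H}_i}(x) \ge d_{\mathcal{H}_{i+1}'}(x) \ge (9/128 - \alpha)|V_{i+1}'|^3 \ge (9/128 - \alpha)m^3$ by the terminating condition of cleaning; for $i = t$ the bound $\delta(\mathcal{F}^*) \ge (9/128 - \alpha)m^3$ is the one already invoked in the proof of Lemma \ref{lemmasf}. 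Restricting $\mathcal{H}_i$ to $V_t$ loses, at each $x$, at most $|C|{n-1 \choose 2} \le \alpha n^3/2 \le \alpha m^3$ edges, using $|C| \le \alpha n$ from Lemma \ref{lemmav} together with $m \ge (1-\alpha)n$. Hence $\delta(\mathcal{G}_i) \ge (9/128 - 2\alpha)m^3$, and summing over $m$ vertices yields $e(\mathcal{G}_i) \ge (9/512 - \alpha/2)m^4$.

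\emph{Part (ii).} Property (P3) gives $\mathcal{G}_i \subseteq \mathcal{B}_i = Q_{i,1} \times {Q_{i,2} \choose 3}$, so each vertex's degree in $\mathcal{G}_i$ is bounded by its degree in $\mathcal{B}_i$. If $|Q_{i,1}| > (1/4+\beta)m$, then for any $v \in Q_{i,1}$, $d_{\mathcal{G}_i}(v) \le {|Q_{i,2}| \choose 3} \le \left({9 \over 128} - {9\beta \over 32} + O(\beta^2)\right)m^3$, which contradicts (i) since $\beta \gg \alpha$. If $|Q_{i,1}| < (1/4-\beta)m$, set $p = |Q_{i,1}|/m < 1/4 - \beta$; then for any $v \in Q_{i,2}$, $d_{\mathcal{G}_i}(v) \le |Q_{i,1}|{|Q_{i,2}|-1 \choose 2} \le \frac{p(1-p)^2}{2}m^3$. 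The derivative of $p(1-p)^2/2$ is $(1-p)(1-3p)/2$, which equals $3/32>0$ at $p=1/4$, so this degree is at most $\left(9/128 - 3\beta/32 + O(\beta^2)\right)m^3$, again contradicting (i).

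\emph{Part (iii).} Given $|Q_{i,1}| = (1/4 \pm \beta)m$ from (ii), direct expansion gives ${|Q_{i,2}| \choose 3} = (9/128 + O(\beta))m^3$ and $|Q_{i,1}|{|Q_{i,2}|-1 \choose 2} = (9/128 + O(\beta))m^3$, so $d_{\mathcal{B}_i}(x) \le (9/128 + O(\beta))m^3$ for every $x \in V_t$. Subtracting the lower bound $d_{\mathcal{G}_i}(x) \ge (9/128 - 2\alpha)m^3$ from (i) and using $\mathcal{G}_i \subseteq \mathcal{B}_i$ gives $d_{\mathcal{B}_i \setminus \mathcal{G}_i}(x) \le O(\beta)m^3 + 2\alpha m^3 \le \gamma m^3$, where the final inequality comes from the hierarchy $\gamma \gg \beta \gg \alpha$.

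\emph{Principal obstacle.} The subtlety lies entirely in (i): the graphs $\mathcal{H}_i$ need not enjoy a uniform minimum-degree bound, because merging can simultaneously raise some degrees and lower others. Restricting attention to $V_t$, where no cleaning ever occurs, and exploiting edge-monotonicity of cleaning bypasses this; once (i) is in hand, (ii) and (iii) become first-order perturbations of the extremal computation from Lemma \ref{lemmasf}.
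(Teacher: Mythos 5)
Your proof is correct and follows the paper's own argument: part (i) by tracing the cleaning threshold back through $\mathcal{H}'_{i+1}$ to $\mathcal{H}_i$ and discounting the at most $\alpha n\binom{n}{2}$ edges meeting $V_i\setminus V_t$, then (ii) and (iii) by comparing $d_{\mathcal{G}_i}$ against $d_{\mathcal{B}_i}$ using (P3). If anything your write-up of (ii) is cleaner than the paper's, which (in a likely copy-paste from Lemma~\ref{lemmasf}) writes $d_{\mathcal{F}^*}(v)$ and invokes $\delta(\mathcal{F}^*)$ where $d_{\mathcal{G}_i}(v)$ and part (i) are what the argument actually uses for $i<t$; your derivative computation giving the correct first-order coefficient $3\beta/32$ also fixes a harmless arithmetic slip in the paper's stated bound.
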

\begin{proof}
By definition of cleaning, $d_{\mathcal{H}_i}(x) \ge ({9 \over 128}-\alpha)m^3$ holds for every $x \in V_t$. It follows by Lemma \ref{lemmav} that
$$\delta(\mathcal{G}_i)\ge \left({9 \over 128}-\alpha \right)m^3-|V_i\setminus V_t|{n \choose 2} \ge \left({9 \over 128}-\alpha \right) m^3-{1 \over 2} \alpha \left({m \over 1-\alpha} \right)^3 \ge \left({9 \over 128}-2\alpha \right)m^3.$$
This proves (i).

Next, assume for the sake of contradiction that $|Q_{i,1}|\neq \left({1 \over 4}\pm \beta\right)m$.
First suppose that $|Q_{i,1}|>\left({1 \over 4}+ \beta\right)m$. Then $|Q_{i,2}|<({3 \over 4}- \beta)m$.
For any $v \in Q_{i,1}$, $d_{\mathcal{F}^*}(v)\le {({3 \over 4}- \beta)m \choose 3}\le \left({9 \over 128}-{9 \over 33}\beta\right)m^3$, which contradicts to $\delta(\mathcal{F}^*)\ge \left({9 \over 128}-\delta\right)m^3$.
Now assume that $|Q_{i,1}|<\left({1 \over 4}- \beta\right)m$. Suppose that $|Q_{i,1}|=({1 \over 4}- \mu)m$, where $\beta \le \mu \le 1/4$ is a real number.
So $|Q_{i,2}|=({3 \over 4}+ \mu)m$. For any $v \in Q_{i,2}$, $d_{\mathcal{F}^*}(v)\le ({1 \over 4}- \mu)|V(\mathcal{F}^*|\cdot{\left({3 \over 4}+\mu \right)m \choose 2} = {1\over 2}\left({9 \over 64}-{3 \over 16}\mu-{5 \over 4}\mu^2-\mu^3\right)m^3
\le \left({9 \over 128}-{3 \over 16}\beta\right)m^3$, a contradiction.
This proves (ii).

Finally, let $x \in Q_{i,1}$ and $y \in Q_{i,2}$ be two arbitrary vertices. By (P3) and (ii) we have
$$d_{\mathcal{B}_i}(x)\le {\left({3 \over 4}+ \beta\right)m \choose 3} \le \left({9 \over 128}+{\gamma \over 2}\right)m^3$$
and
$$d_{\mathcal{B}_i}(y)\le ({1 \over 4}+ \beta)m{({3 \over 4}+ \beta)m \choose 2} \le ({9 \over 128}+{\gamma \over 2})m^3.$$
Since $\mathcal{G}_i \subseteq \mathcal{B}_i$ by (P3) for $\mathcal{Q}_i$, this implies (iii), using (i) and $\alpha \ll \gamma$.
\qed
\end{proof}

\begin{lemma}\label{lemmagen}
Let $c$ be a real number satisfying $\gamma \ll c \le 10^{-2}$. Let $\mathcal{G}$ be a $K_{4,4}^4$-hom-free $4$-graph with vertex set $Q_{i,1} \cup Q_{i,2}$, where $Q_{i,1} \cap Q_{i,2}=\emptyset$ and let $\mathcal{B}=Q_{i,1} \times {Q_{i,2} \choose 3}$.
Let $m=|Q_{i,1} \cup Q_{i,2}|$.
If there are vertex sets $A\subseteq Q_{i,1}$ and $B\subseteq Q_{i,2}$ satisfying that
\newline
{\em (1)} $|A|\ge cm$ and $|B|\ge cm$,
\newline
{\em(2)} $d_{\mathcal{G}}(x)\ge (9/128-2\alpha)m^3$ for every $x\in Q_{i,1} \cup Q_{i,2}$ and
\newline
{\em(3)} $d_{\mathcal{B}[A\cup B] \setminus \mathcal{G}[A\cup B]}(x)\le \gamma m^3$ for every $x\in A$,
\newline
then there is no edge $e$ of $\mathcal{G}$ such that $|e \cap A|\ge 2$.
\end{lemma}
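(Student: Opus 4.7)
The plan is to argue by contradiction. Suppose $e_1=\{a_1,a_2,y_3,y_4\}\in\mathcal{G}$ satisfies $\{a_1,a_2\}\subseteq A$. I would exhibit a second edge $f\in\mathcal{G}$ disjoint from $e_1$ such that each of the $16$ cross pairs $\{u,v\}$ with $u\in e_1$ and $v\in f$ is covered by some edge of $\mathcal{G}$. The $8$-vertex set $C=e_1\cup f$ together with witness edges for each cross pair then realises a copy of $\mathcal{K}_8^{M_2^4}$ in $\mathcal{G}$, which by the remark following Theorem~\ref{theostability} contradicts $\mathcal{G}$ being $K_{4,4}^4$-hom-free.

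The first step is to establish pair-coverage estimates. For fixed $a\in A$, a direct double-count using condition (3) bounds the number of $b\in B$ for which no edge of $\mathcal{B}[A\cup B]\cap\mathcal{G}[A\cup B]$ covers $\{a,b\}$ by $3\gamma m^3/\binom{|B|-1}{2}=O(\gamma/c^2)\cdot m$, which is $\ll cm$ since $\gamma\ll c$. An analogous count shows that for any fixed $b\in B$, the set of $b'\in B$ with $\{b,b'\}$ uncovered through such edges also has size $\ll cm$. For a pair $\{y,v\}$ with $y\in(Q_{i,1}\cup Q_{i,2})\setminus(A\cup B)$, condition (2) gives $d_\mathcal{G}(y)\ge(9/128-2\alpha)m^3$, hence $|\{v:d_\mathcal{G}(y,v)\ge 1\}|\ge(27/64-o(1))m$, so such pairs are covered for most candidate $v$.

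I would then try $f=\{a_3,b_1,b_2,b_3\}$ with $a_3\in A\setminus e_1$ and $b_1,b_2,b_3\in B\setminus e_1$. By (3) there are $\binom{|B|-|e_1\cap B|}{3}-\gamma m^3=\Omega(c^3m^3)$ triples $\{b_1,b_2,b_3\}$ making $f\in\mathcal{G}$, and the coverage estimates allow me to further require each $b_j$ to be good simultaneously with respect to $a_1,a_2$ and (where applicable) $y_3,y_4$, losing only an $o(1)$ fraction of triples. This arrangement covers every cross pair type except the two $A\times A$ cross pairs $\{a_1,a_3\}$ and $\{a_2,a_3\}$.

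The main obstacle is these two $A\times A$ cross pairs, since $\mathcal{B}$ cannot witness them. I plan to handle this by a case split on $P=\{a_3\in A\setminus e_1:\{a_1,a_3\}\text{ and }\{a_2,a_3\}\text{ are both covered in }\mathcal{G}\}$. If $P\ne\emptyset$, pick $a_3\in P$ and conclude as above. If $P=\emptyset$, every edge of $\mathcal{G}$ at $a_1$ must avoid $A\setminus\{a_1,a_2\}$, so $d_\mathcal{G}(a_1)\le\binom{m-|A|}{3}+\binom{m-2}{2}$; combined with condition (2) this forces $|A|\le m/4+o(m)$, and in this regime I would instead pick $f$ supported outside $A$ so that no new $A\times A$ pair appears and the only such pair in $C$, namely $\{a_1,a_2\}$, is already witnessed by $e_1$. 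Existence of such $f\in\mathcal{G}$ disjoint from $e_1$ comes from (2) because the edges of $\mathcal{G}$ meeting $A\cup e_1$ number at most $(|A|+4)\binom{m-1}{3}\le(c/6+o(1))m^4$, strictly less than $|\mathcal{G}|\ge(9/512)m^4$ thanks to $c\le 10^{-2}$. The remaining cross pairs are then handled by the same coverage estimates. Making this case split rigorous and uniform over the possible positions of $y_3,y_4$ is the main technical work.
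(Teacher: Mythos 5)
Your proposal takes a fundamentally different route from the paper's, and it has real gaps that are not just technicalities.

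You try to build the forbidden configuration directly: starting from $e_1=\{a_1,a_2,y_3,y_4\}$ with $a_1,a_2\in A$, you look for a disjoint edge $f$ such that all $16$ cross pairs between $e_1$ and $f$ are covered. The core difficulty is that $y_3,y_4$ may lie anywhere in $Q_{i,1}\cup Q_{i,2}$ — in particular outside $A\cup B$ — where hypotheses (2) and (3) give essentially no control over which pairs $\{y_j,v\}$ are covered. Condition (2) only yields that $y_j$ has roughly $(27/64)m$ covered partners in the entire vertex set; since $|B|$ may be as small as $cm$ with $c\le 10^{-2}$, those partners can miss $B$ entirely, and your claim of losing only an $o(1)$ fraction of candidate triples is unjustified. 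You also never address covering $\{y_3,a_3\}$ and $\{y_4,a_3\}$. Finally, in the $P=\emptyset$ branch the inference ``every edge of $\mathcal{G}$ at $a_1$ avoids $A\setminus\{a_1,a_2\}$'' does not follow: $P=\emptyset$ only says that for each $a_3$ at least \emph{one} of $\{a_1,a_3\}$, $\{a_2,a_3\}$ is uncovered, which is compatible with $\{a_1,a_3\}$ being covered for many $a_3$.

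The paper's argument avoids all of this by \emph{not} using $e$ as one of the two matching edges. It takes $a,b\in e\cap A$ and constructs two new disjoint edges $\{a\}\cup f_1$ and $\{b\}\cup f_2$ with $f_1,f_2$ chosen inside $Q'=B(a)\cap B(b)\subseteq B$, a large set by (3), with $f_1\in L(a)$, $f_2\in L(b)$. Then $\{a,b\}$ is covered by $e$, the pairs between $\{a,b\}$ and $f_1\cup f_2$ are covered by the definition of $B(a),B(b)$ and the matching edges, and the only pairs that can fail are those in $f_1\times f_2$. Hom-freeness now forces an uncovered pair $\{v_1,v_2\}\subseteq f_1\cup f_2$, and — this is the step your sketch lacks — that uncovered pair is fed back into (3): for any $v\in A$ and $u_2\in Q'_3$, the $4$-set $\{v,v_1,v_2,u_2\}$ is a non-edge of $\mathcal{B}$, and ranging over disjoint choices in $M_a\times M_b\times Q'_3$ gives $\Omega(c^3 m^3)\gg\gamma m^3$ such non-edges at $v$, contradicting (3). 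The vertices $y_3,y_4$ never enter. This ``hom-freeness produces an uncovered pair, which violates the density hypothesis'' mechanism is the key idea your proposal is missing, and without it I do not see how to close the gaps around $y_3,y_4$ or the $P=\emptyset$ case.
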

\begin{proof}
Assume for the sake of contradiction that there exists one edge $e \in \mathcal{G}$ such that $|e \cap A|\ge 2$. Let $a, b \in e$ satisfy $a, b \in A$.
For $x=a$ or $b$ let
$$B(x):=\{x' \in B\setminus e: \exists e' \in \mathcal{G} \:\: {\rm such} \:\: {\rm that} \:\: \{x,x'\}\subseteq e'\}.$$
By condition (3) we have $ |B(x)|\ge |B|-cm/10$.
Let $Q'=B(a)\cap B(b)$. It is clear that $|Q'|\ge 3cm/4$.
Let $Q'_1, Q'_2, Q'_3$ be an arbitrary partition of $Q'$ satisfying $|Q'_j| \ge c m/4$ for every $j \in [3]$.
Fix  $D \subseteq A\setminus e$ satisfying $|D|\ge c m/2$.
Denote the maximum set of the disjoint triples in $Q'_1$ belonging to $L_{\mathcal{G}}(a)$ as $M_a$.
We claim that $|M_a|\ge c m/20$. Otherwise suppose that $|M_a|<c m/20$. Since for every triple $g\in Q'_1 \setminus (\cup_{f\in M_a}f)$, $g\notin L_{\mathcal{G}}(a)$.
Then there are at least ${|Q'_1 \setminus (\cup_{f\in M_a}f)| \choose 3}>{cm/10 \choose 3}$ triples in $Q'_1$  not belonging to $L_{\mathcal{G}}(a)$, which contradicts to (3).
Similarly, there are at least $cm/20$ pairwise disjoint triples in $Q'_2$, denoted as $M_b$, belonging to $L_{\mathcal{G}}(b)$.
Since $\mathcal{G}$ is $K_{4,4}^4$-hom-free, then $\forall f_1 \in M_a$, $\forall  f_2 \in M_b$, $\exists v_1\in f_1, v_2\in f_2$, $v_1v_2u_1u_2 \notin \mathcal{G}$ for all  $u_1 \in D$ and $u_2 \in Q'_3$.
Let $v$ be an arbitrary vertex in $D$. Then there are at least $ |M_a|\cdot |M_b| \cdot |Q'_3|$ triples not belonging to $L_{\mathcal{B}[A\cup B] \setminus \mathcal{G}[A\cup B]}(v)$, which contradicts to (3).
\qed
\end{proof}
\medskip

Let $u,v \in U'_i$ be such that in the $i$th Merging step $P'_{i,v}$ was merged into $P'_{i,u}$. Note that $u \in U_i$ and $v \notin U_i$. Denote
$$ A_u=P'_{i,u}\cap V_t  \:\: {\rm and} \:\:  A_v=P'_{i,v}\cap V_t.$$
We can view $\mathcal{G}_i$ as being obtained from $\mathcal{G}_{i-1}$ by Merging $A_v$ to $A_u$.
Since $\mathcal{Q}_i$ satisfies (P2), then $A_v \cup A_u \subseteq Q_{i,1}$ or $A_v \cup A_u \subseteq Q_{i,2}$.
In both cases, let $$W_1=Q_{i,1}\setminus A_v \:\: {\rm and} \:\: W_2=Q_{i,2}\setminus A_v.$$
Suppose the partition $\{W'_1, W'_2\}$ of $V_t$ is obtained by adding $A_v$ to $W_1$ or $W_2$ such that
$$\Sigma:=|\{e\in E_i: |e \cap W'_1|=2 \:\: {\rm or } \:\: e\subseteq W'_2\}|+2|\{e\in E_i: |e \cap W'_1|=3\}|+3|\{e\in E_i: e \subseteq W'_1\}|$$
is the smaller one.

Let $Q_{i-1,1}=W'_1$ and $Q_{i-1,2}=W'_2$ and let $\mathcal{Q}_{i-1}=\{Q_{i-1,1},Q_{i-1,2}\}$. We call an edge $e$ of $\mathcal{G}_{i-1}$ {\em bad} if $|e \cap W'_1|=2$ or $e \subseteq W'_2$, {\em very bad} if $|e \cap W'_1| = 3$, {\em worst} if $|e \cap W'_1| =4$ or {\em good} otherwise. We will prove that $\mathcal{Q}_{i-1}$ satisfies (P1)-(P3).
This is immediate for (P2), and (P1) follows since $W_1\neq \emptyset$ and $1 \notin A_v$ by the definition of Merging. It remains to prove (P3), i.e. all edges are good.
Equivalently, we need to show that $\Sigma=0$, as every bad edge is counted exactly once in $\Sigma$, every very bad edge is counted exactly  twice in $\Sigma$ and every worst edge is counted exactly three times in $\Sigma$, whereas good edges are not counted at all.

Note that any $e \in \mathcal{G}_{i-1}$ that is not good satisfies $|e \cap A_v|=1$. Since $|e \cap A_v|\le 1$ by the definition of Merging and $|e \cap A_v|\ge 1$  by (P3) for $\mathcal{Q}_i$.

We say that a vertex of $A_v$ is {\em bad} if it is contained in at least $10^{-3}m^3$ edges which are not good.
Before proving that all edges are good, we will prove that a vertex of $A_v$ cannot be contained in too many edges which are not good.

\begin{lemma}\label{lemmabad}
There are no bad vertices in $A_v$.
\end{lemma}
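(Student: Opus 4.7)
The plan is to suppose for contradiction that some $w\in A_v$ is bad, i.e., $w$ lies in at least $10^{-3}m^3$ non-good edges of $\mathcal{G}_{i-1}$, and combine Lemma~\ref{lemmagen} with the minimality defining $\{W'_1,W'_2\}$ to reach a contradiction. Two preliminary observations set up the argument. First, since $\mathcal{G}_{i-1}$ and $\mathcal{G}_i$ coincide on edges disjoint from $A_v$ and $\mathcal{Q}_i$ satisfies (P3), every non-good edge of $\mathcal{G}_{i-1}$ (with respect to $\{W'_1,W'_2\}$) must meet $A_v$. Second, by Lemma~\ref{lemmas1}(4) applied at step $i-1$, the vertices of $A_v$ are pairwise equivalent in $\mathcal{G}_{i-1}$, so $|e\cap A_v|\le 1$ for every edge $e$ of $\mathcal{G}_{i-1}$ and every vertex of $A_v$ is bad. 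By (P2) for $\mathcal{Q}_i$, the set $A_u\cup A_v\subseteq P_{i,u}\cap V_t$ lies entirely in $Q_{i,1}$ or $Q_{i,2}$; I focus on the case $A_u\cup A_v\subseteq Q_{i,1}$ (the other being symmetric), write $W_1=Q_{i,1}\setminus A_v$, $W_2=Q_{i,2}$, and denote by $N^1_a$ the number of edges $e\in\mathcal{G}_{i-1}$ with $|e\cap A_v|=1$ and $|e\cap W_1|=a$.

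Translating the minimality $\Sigma_1\le\Sigma_2$ of the two placements of $A_v$ into this notation yields $N^1_1+N^1_2+N^1_3\le N^1_0$, while by equivalence the number of non-good edges through $w$ equals $(N^1_1+N^1_2+N^1_3)/|A_v|$, so badness gives $N^1_{\ge 1}\ge 10^{-3}|A_v|m^3$. The central step is to apply Lemma~\ref{lemmagen} to $\mathcal{G}_{i-1}$ with $A$ the subset of $W_1$ consisting of those vertices $x$ with $d_{\mathcal{B}_i\setminus\mathcal{G}_{i-1}}(x)\le \gamma m^3$, and $B$ the analogous subset of $W_2$. Since $\mathcal{G}_i$ and $\mathcal{G}_{i-1}$ differ only on edges meeting $A_v$, any vertex's deficit shifts by at most $O(|A_v|m^2)=o(m^3)$ in passing from $\mathcal{G}_i$ to $\mathcal{G}_{i-1}$; combined with Lemma~\ref{lemmaspliting}(ii)--(iii) this gives $|A|,|B|\ge cm$, Lemma~\ref{lemmaspliting}(i) supplies the minimum-degree condition, and Lemma~\ref{lemmas3} the $K_{4,4}^4$-hom-freeness. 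The conclusion that no edge of $\mathcal{G}_{i-1}$ has two vertices in $A$ bounds $N^1_a$ for $a\ge 2$ by $o(m^3)|A_v|$, with the small exceptional contribution from $W_1\setminus A$.

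To conclude, it remains to bound $N^1_1$, i.e., edges $\{w,a,b_1,b_2\}$ with $a\in W_1$, $b_1,b_2\in W_2$. One applies Lemma~\ref{lemmagen} a second time with $A$ enlarged to include $A_v$: when $|A_v|\ge cm$, the equivalence of $A_v$-vertices transfers the deficit hypothesis from the common link (controlled via $N^1_0\le|A_v|\binom{|W_2|}{3}$ and the $\Sigma$-minimality relation), and the conclusion rules out edges containing both an $A_v$-vertex and another $Q_{i,1}$-vertex outside the exceptional set; when $|A_v|<cm$, the bound $N^1_1\le N^1_0\le|A_v|\binom{|W_2|}{3}$ together with the cleaning lower bound $d_{\mathcal{G}_{i-1}}(w)\ge(9/128-2\alpha)m^3$ forces $N^1_0/|A_v|$ to be within $o(m^3)$ of $\binom{|W_2|}{3}$, leaving at most $o(m^3)$ slack for $N^1_1/|A_v|$. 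Summing over all types, $w$ has $o(m^3)\ll 10^{-3}m^3$ non-good edges, contradicting badness. The main obstacle is precisely the $N^1_1$ bound: such edges have only one vertex in $W_1$ and hence are not ruled out by Lemma~\ref{lemmagen} applied to $A\subseteq W_1$ alone, so the argument must either enlarge $A$ to include $A_v$ (handling the deficit hypothesis via equivalence) or exploit the $\Sigma$-minimality bookkeeping in the small-$|A_v|$ regime, and matching the constants $\alpha\ll\gamma\ll c\le 10^{-2}$ against the $10^{-3}m^3$ threshold uniformly across both regimes is where the technical heart of the proof lies.
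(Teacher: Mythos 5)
Your preliminary observations are correct and match the paper's setup: every non-good edge of $\mathcal{G}_{i-1}$ meets $A_v$, the vertices of $A_v$ are pairwise equivalent so $|e\cap A_v|\le 1$ and all of $A_v$ is bad, the case split according to $A_u\cup A_v\subseteq Q_{i,1}$ or $Q_{i,2}$, and (in your notation) the $\Sigma$-minimality relation $N^1_1+N^1_2+N^1_3\le N^1_0$. However, the central step of bounding $N^1_1$ is a genuine gap, and neither of your two proposed fixes works. Enlarging $A$ to include $A_v$ fails because condition (3) of Lemma~\ref{lemmagen} can be violated precisely for the vertices of $A_v$: for $x\in A_v$ the deficit $d_{\mathcal{B}\setminus\mathcal{G}_{i-1}}(x)={|W_2|\choose 3}-N_0^1/|A_v|$ need not be at most $\gamma m^3$, since the only lower bound on $N_0^1/|A_v|$ obtainable from cleaning plus $\Sigma$-minimality is $N_0^1/|A_v|\ge(9/256-\alpha)m^3$, roughly half of ${|W_2|\choose 3}\approx 9m^3/128$, leaving a deficit of order $m^3$. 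The same computation refutes your small-$|A_v|$ alternative: $N_0^1/|A_v|$ is not forced within $o(m^3)$ of ${|W_2|\choose 3}$, so no usable slack is produced. In addition, your claim that the passage from $\mathcal{G}_i$ to $\mathcal{G}_{i-1}$ only shifts deficits by $O(|A_v|m^2)=o(m^3)$ tacitly assumes $|A_v|=o(m)$, which is not given in the subcase you focus on (the paper only establishes bounds like $|A_v|<m/4$ in the \emph{other} subcases, via Claims 1.2.1 and 2.1.1).

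The missing idea is the paper's direct expansion argument in Subcase~1.1, which does \emph{not} reduce to Lemma~\ref{lemmagen}: fix a single non-good edge $xaa_1a_2$ with $a\in W_1$; use $\Sigma$-minimality to show $x$ has at least $m^3/30$ good edges; extract a large matching $M(x)\subseteq L_{\mathcal{G}_{i-1}}(x)\cap{W_2\choose 3}$ and, disjointly, a large matching $M(a)\subseteq L_{\mathcal{G}_{i-1}}(a)\cap{W_2\choose 3}$; then $K_{4,4}^4$-hom-freeness forces, for each $f_1\in M(x)$ and $f_2\in M(a)$, an uncovered pair $b_1\in f_1$, $b_2\in f_2$, and summing over $f_1,f_2$ and over third coordinates in $W_2$ gives $d_{\mathcal{B}_i\setminus\mathcal{G}_i}(w)>\gamma m^3$ for any third vertex $w\in W_1\setminus(A_v\cup\{a,a_1,a_2\})$, contradicting Lemma~\ref{lemmaspliting}(iii). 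The point is that the contradiction is obtained by blowing up the link deficit of an auxiliary vertex $w$ disjoint from $A_v$, not by verifying a deficit hypothesis for $A_v$ itself; this is what your sketch lacks.
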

\begin{proof}
Assume for the sake of contradiction that $x \in A_v$ is a bad vertex. Then every vertex in $A_v$ is bad. We divide it into two cases according to $A_v \cup A_u \subseteq Q_{i,1}$ or $A_v \cup A_u \subseteq Q_{i,2}$.
\newline
{\em Case 1.} $A_v \cup A_u \subseteq Q_{i,1}$. Note that $W_1=Q_{i,1}\setminus A_v$ and $W_2=Q_{i,2}$ in this case.

Subcase 1. Adding $A_v$ to $W_1$ minimises $\Sigma$.
Since every bad edge (if there exist bad edges) intersects $A_v$,
so every bad edge intersect $W_1$. Fix an edge $xaa_1a_2 \in \mathcal{G}_{i-1}$ such that $a \in W_1$. Note that $a \notin A_v$.

Claim 1.1.1: There are at least ${1 \over 30}m^3$ good edges containing $x$. Otherwise we consider the partition $\{W''_1,W''_2\}$ of $V_t$ obtained by adding $A_v$ to $W_2$ rather than $W_1$.
This new partition implies that every good edge containing a vertex of $A_v$ turns bad, this contributes to $\Sigma$ at most ${1 \over 30}|A_v|m^3$.
Every bad edge turns good, every very bad edges turns bad and every worst edge turns very bad, so this reduces $\Sigma$ by at least $(9/128-2\alpha-1/30)|A_v|m^3> {1 \over 30}|A_v|m^3$, which contradicts the minimality of $\Sigma$.

Claim 1.1.2: There are at least $10^{-3}m$ pairwise disjoint triples in $W_2$ belonging to $L_{\mathcal{G}_{i-1}}(x)$.
Otherwise, since every such triple intersects with less than $3{m \choose 2}$ triples in $W_2$, then there are at most  $3{m \choose 2}\cdot 10^{-3}m<m^3/30$ good edges of $\mathcal{F}$ containing $x$, contradicting to Claim 1.1.1.

Fix a set of such triples of size $10^{-3}m$ as $M(x)$.
Let $$B(x):=\{x' \in W_2: \exists e \in \mathcal{G}_{i-1} \:\: {\rm such} \:\: {\rm that} \:\: \{x,x'\}\subseteq e\}.$$

Claim 1.1.3: $|B(x)\setminus (\cup_{e\in M(x)}e)|\ge m/10$. Since all good edges of $\mathcal{F}$ containing $x$ are contained in $\{x\}\times{B(x)\choose 3}$, then
${|B(x)|\choose 3}\ge {1 \over 30}m^3$ and Claim 1.1.3 follows.

Denote the maximum set of the disjoint triples in $B(x)\setminus (\bigcup_{e\in M(x)}e)$ belonging to $L_{\mathcal{G}_{i-1}}(a)$ (in this sense, they belong  to $L_{\mathcal{G}_{i}}(a)$ as well) as $M(a)$.

Claim 1.1.4: $|M(a)|\ge m/60$.
Otherwise suppose that $|M_a|<m/60$. Since for every triple $g$ in $B(x)\setminus (\bigcup_{f\in M(x)\cup M(a)}f)$, $g\notin L_{\mathcal{G}_{i}}(a)$ (or $L_{\mathcal{G}_{i-1}}(a)$).
Then there are at least ${m/20 \choose 3}$ triples in $W_2$ not belonging to $L_{\mathcal{G}_{i}}(a)$ (or $L_{\mathcal{G}_{i-1}}(a)$), which contradicts Lemma \ref{lemmaspliting} (iii).

Since $\mathcal{G}_{i-1}$ is $K_{4,4}^4$-hom-free, then for every $f_1 \in M(x)$ and every $f_2 \in M(a)$, there exist $b_1 \in f_1$ and $b_2 \in f_2$ such that $b_1b_2b_3b_4 \notin \mathcal{G}_{i-1}$ ($b_1b_2b_3b_4 \notin \mathcal{G}_{i}$ as well) for every pair $b_3, b_4 \in V_t$.
Thus for any $w \in W_1\setminus (A_v \cup xaa_1a_2)$, we have
$$d_{\mathcal{B}_i \setminus \mathcal{G}_i}(w)\ge {1\over 3}\cdot|M(x)|\cdot|M(a)|\cdot(|W_2|-2)>\gamma m^3.$$
Thus we get a contradiction with Lemma \ref{lemmaspliting} (iii).

Subcase 2. Adding $A_v$ to $W_2$ minimizes $\Sigma$.

We first prove that all edges which is not good are contained in $W_2\cup A_v$.

Claim 1.2.1: $|A_v|< |Q_{i,1}|- m/20$ (note that $W_1=Q_{i,1}\setminus A_v$). Suppose that $|A_v|\ge |Q_{i,1}|- m/20$.
Then $|W_1|\le m/20$.
Let $w \in A_v$. We bound the number of good edges containing $w$, denoted by $d_{good}(w)$, and the number of edges containing $w$ which are not good, denoted by $d_{bad}(w)$ in $\mathcal{G}_{i-1}$.
Then $$d_{good}(w)\le |W_1|{|W_2|\choose 2} \le {m \over 20} {19m/20 \choose 2}\le 0.023m^3.$$
So $$d_{bad}(w)\ge (9/128-2\alpha) m^3-0.023m^3 \ge  0.047m^3.$$
Now we consider the partition $\{W''_1,W''_2\}$ of $V_t$ obtained by adding $A_v$ to $W_1$ rather than $W_2$.
The number increasing $\Sigma$ is at most
$$|W_1|\cdot {|W_2| \choose 2}\cdot |A_v|+ {|W_1| \choose 2}\cdot |W_2| \cdot |A_v|+ {|W_1| \choose 3}\cdot |A_v|< 0.02 |A_v| m^3,$$
whereas the number decreasing $\Sigma$ is at least
$$ \sum_{w\in A_v}d_{bad}(w) - {|W_1| \choose 2}|W_2|- {|W_1| \choose 3}\ge 0.03|A_v|m^3.$$
This contradicts the minimality of $\Sigma$.

Let $A=W_1$ and $B=W_2$. By Lemma \ref{lemmaspliting}, and the relationship between $\mathcal{G}_{i}$ and $\mathcal{G}_{i-1}$, then $\mathcal{G}_{i-1}$ with $Q_{i-1,1}\cup Q_{i-1,2}$ and $A, B$ satisfy the conditions of Lemma \ref{lemmagen}.
Applying Lemma \ref{lemmagen}, we get that all bad edges are contained in $W_2\cup A_v$. Fix an edge $xyzw \in \mathcal{G}_{i-1}$ satisfying $y, z, w \in W_2$. For $j=1,2$, let
$$B_j(x):=\{x' \in W_j\setminus xyzw: \exists e \in \mathcal{G}_{i-1} \:\: {\rm such} \:\: {\rm that} \:\: \{x,x'\}\subseteq e\}.$$

Claim 1.2.2: $|B_1(x)|\ge 10^{-3}m$ and $|B_2(x)|\ge 10^{-1}m$. We first prove that $|B_1(x)|\ge 10^{-3}m$. Otherwise suppose that $|B_1(x)|< 10^{-3}m$. Now we consider the partition $\{W''_1,W''_2\}$ of $V_t$ obtained by adding $A_v$ to $W_1$ rather than $W_2$.
Since there are no very bad edges and worst edges in the partition of $\{W'_1,W'_2\}$, so every bad edge turns good.
There are at least $|A_v|10^{-3}m^3$ such edges (Recall that every vertex in $A_v$ is bad). On the other side, every good edge containing a vertex of $A_v$ becomes bad.
There are less than $|A_v|\cdot|B_1(x)|m^2 < |A_v|10^{-3}m^3$ such edges. However, this contradicts the minimality of $\Sigma$.
Now we prove that $|B_2(x)|\ge 10^{-1}m$. Since there are at least $10^{-3}m^3$ edges containing $x$ in $W_2\cup A_v$, then ${|B_2(x)| \choose 3} \ge 10^{-3}m^3$.

Claim 1.2.3: Let $M_x$ be the maximum matching of such edges satisfying one vertex in $B_1(x)$ and another three vertices in $B_2(x)$, then $|M_x|\ge 10^{-4}m$.
Otherwise suppose that $|M_x|< 10^{-4}m$. Denote $V_x=\cup_{f \in M_x}f$. Since for every vertex $x_1 \in B_1(x) \setminus V_x$ and every triple $\{x_2,x_3,x_4\} \in B_2(x) \setminus V_x$, $\{x_1,x_2,x_3,x_4\} \in \mathcal{B}_i \setminus \mathcal{G}_{i-1}$ (in $\mathcal{B}_i \setminus \mathcal{G}_i$ as well).
Then there are at least $|B_1(x) \setminus V_x|{|B_2(x) \setminus V_x| \choose 3} \ge \varepsilon n^4$ edges of $\mathcal{B}_i \setminus \mathcal{G}_i$, which contradicts (iii) of Lemma \ref{lemmaspliting}.

Since $\mathcal{G}_{i-1}$ is $K_{4,4}^4$-hom-free, then $\forall f_1 \in M_x$, $\exists a \in f_1$ and $q \in \{y,z,w\}$  such that $aqq'q'' \notin \mathcal{G}_{i-1}$ for all pairs $q',q'' \in V_t$.

Consider the subcase that there are at least $10^{-5}m$ vertices in $B_1(x) \cap (\cup_{f \in M_x})$ such that for every such vertex $b$, $\exists q \in \{y,z,w\}$ such that $bqq'q'' \notin \mathcal{G}_{i-1}$ for all pairs $q',q'' \in V_t$.
By pigeonhole principle, there is $q \in \{y,z,w\}$ such that there are at least $1/3 \cdot 10^{-5}m \cdot {m/2 \choose 2}>10^{-7}m^3$ edges of $\mathcal{B}_i \setminus \mathcal{G}_i$ containing $q$, which contradicts (iii) of Lemma \ref{lemmaspliting}.

In the remaining subcase that there are at least $9\cdot10^{-5}m$ vertices in $B_2(x) \cap (\cup_{f \in M_x})$ such that for every such vertex $b'$, $\exists p \in \{y,z,w\}$ such that $b'pq'q'' \notin \mathcal{G}_{i-1}$ for all pairs $q',q'' \in V_t$.
By pigeonhole principle, there is $p' \in \{y,z,w\}$ such that there are at least ${1 \over 3} \cdot 9 \cdot 10^{-5}m \cdot {m\over 20} \cdot {m\over 2}>10^{-7}m^3$ edges of $\mathcal{B}_i \setminus \mathcal{G}_i$ containing $p'$, which contradicts (iii) of Lemma \ref{lemmaspliting}.

{\em Case 2.} $A_v \cup A_u \subseteq Q_{i,2}$.

Subcase 1. Adding $A_v$ to $W_1$ minimizing $\Sigma$.

Claim 2.1.1: $|A_v|< m/4$. Suppose that $|A_v|\ge m/4$.
In this case, there are no bad edges contained in $W_2$. Consider the vertex $u\in A_u$. So all edges containing $u$ are good. We now count the degree of $u$ in $\mathcal{G}_{i-1}$ (in $\mathcal{G}_{i}$ as well),
$$d_{\mathcal{G}_{i-1}}(u)\le |W_1|{|W_2|\choose 2}\le (m/4+\gamma)\cdot {3m/4-m/4+\gamma \choose 2}< (9/128-2\alpha)m^3,$$
which contradicts Lemma \ref{lemmaspliting} (i).

Let $A=W_1$ and $B=W_2$. By Lemma \ref{lemmaspliting}, and the relationship between $\mathcal{G}_{i}$ and $\mathcal{G}_{i-1}$, then $\mathcal{G}_{i-1}$ with $Q_{i-1,1}\cup Q_{i-1,2}$ and $A, B$ satisfy the conditions of Lemma \ref{lemmagen}. Applying  Lemma \ref{lemmagen}, we are done.
\medskip

Subcase 2. Adding $A_v$ to $W_2$ minimizing $\Sigma$.

Denote
$ W=W_1 \cup W_2=V_t\setminus A_v.$
We first show that there is no edge of $\mathcal{G}_{i-1}$ intersecting $W_1$ with two or three vertices. Let $u' \in A_u$, note that every edge of $\mathcal{G}_{i-1}$ containing $u'$ is good.
Then $(9/128-2\alpha)m^3\le d_{\mathcal{G}_{i-1}}(u')\le |W_1|\cdot {|W_2|\choose 2}$, note that $|W_1|=|Q_{i,1}|=(1/4\pm \gamma)m$, so $|W_2|\ge m/2$.
Let $A=W_1$ and $B=W_2$. By Lemma \ref{lemmaspliting}, and the relationship between $\mathcal{G}_{i}$ and $\mathcal{G}_{i-1}$, then $\mathcal{G}_{i-1}$ with $Q_{i-1,1}\cup Q_{i-1,2}$ and $A, B$ satisfy the conditions of Lemma \ref{lemmagen}. Applying Lemma \ref{lemmagen} we get that there is no edge of $\mathcal{G}_{i-1}$ that contains two or three vertices of $W_1$.
Hence there are at least $10^{-3}m^3$ edges containing $x$ contained in $W_2\cup A_v$.
Fix one such edge $xyzw \in \mathcal{G}_{i-1}$, where $y,z,w \in W_2$.
For $j=1,2$, let
$$B'_j(x):=\{x' \in W_j\setminus \{y,z,w\}: \exists e \in \mathcal{G}_{i-1} \:\: {\rm such} \:\: {\rm that} \:\: \{x,x'\}\subseteq e\}.$$
We claim that $|B'_1(x)|\ge 10^{-4}m$ and $|B'_2(x)|\ge 10^{-2}m$.
We first prove that $|B'_1(x)|\ge 10^{-4}m$. Otherwise suppose that $|B'_1(x)|< 10^{-4}m$. Now we consider the partition $\{W''_1,W''_2\}$ of $V_t$ obtained by adding $A_v$ to $W_1$ rather than $W_2$.
Every bad edge contained in $W_2$ turns good.
There are at least $|A_v|10^{-3}m^3$ such edges. On the other hand, every good edge containing a vertex of $A_v$ becomes bad, every bad edge that intersects $W_1$ with two vertices turns very bad and every very bad edge turns worst. There are less than $|A_v|\cdot|B'_1(x)|m^2 < |A_v|10^{-3}m^3$ such edges. However, this contradicts the minimality of $\Sigma$.
Now we prove that $|B'_2(x)|\ge 10^{-2}m$. Since there are at least $10^{-3}m^3$ edges containing $x$ in $W_2$ and ${|B'_2(x)| \choose 3} \ge 10^{-3}m^3$, then $|B'_2(x)| > 10^{-2}m$.

Let $M'(x)$ be the maximum matching of such edges satisfying one vertex in $B'_1(x)$ and another three vertices in $B'_2(x)$. Similar to Claim 1.2.3, we have  $|M'(x)|\ge 10^{-5}m$.
Since $\mathcal{G}_{i-1}$ is $K_{4,4}^4$-free, then $\forall  f_1 \in M'(x)$, $\exists a \in f_1$ and $ q \in \{y,z,w\}$ such that $aqq'q'' \notin \mathcal{G}_{i-1}$ for all pairs $q',q'' \in V_t$.

Consider the subcase that there are at least $10^{-6}m$ vertices in $D \subseteq B'_1(x) \cap (\cup_{f \in M(x)})$ such that $\forall b \in D $, $\exists q \in \{y,z,w\}$ such that $bqq'q'' \notin \mathcal{G}_{i-1}$ for all pairs $q',q'' \in V_t$.
By pigeonhole principle, there is $q \in \{y,z,w\}$ such that there are at least $1/3 \cdot 10^{-6}m \cdot {m/2 \choose 2}>10^{-8}m^3$ edges of $\mathcal{B}_i \setminus \mathcal{G}_i$ containing $q$, which contradicts (iii) of Lemma \ref{lemmaspliting}.

In the remaining subcase that there are at least $9\cdot10^{-6}m$ edges $e \in M'(x)$ such that
 $\exists d \in e\cap W_2$ and $ d' \in \{y,z,w\}$ such that $dd'q'q'' \notin \mathcal{G}_{i-1}$ for all pairs $q',q'' \in V_t$.
By pigeonhole principle, there is $p' \in \{y,z,w\}$ such that there are at least ${1 \over 3} \cdot 9 \cdot 10^{-6}m \cdot {m\over 20} \cdot {m\over 2}>10^{-8}m^3$ edges of $\mathcal{B}_i \setminus \mathcal{G}_i$ containing $p'$, which contradicts (iii) of Lemma \ref{lemmaspliting}.
\qed
\medskip

In our next lemma we will conclude the second stage of the proof by showing that every edge of $\mathcal{G}_{i-1}$ is good. First we show that $|W'_1|\ge m/5$ and $|W'_2|\ge 3m/5$ (so $m/5 \le |W'_1|\le 2m/5$ and $3m/5 \le |W'_2|\le 4m/5$). Note that $W'_1$ and $W'_2$ are obtained by adding $A_v$ to $ W_1$ or $W_2$.
If $W'_1= Q_{i,1}$ then this holds by Lemma \ref{lemmaspliting} (ii). Now we consider the remaining two cases. In both cases, let $x \in A_v$.

Case 1. Adding $A_v$ from $ Q_{i,1}$ to $ W_2$. For this case $ Q_{i,2} \subseteq W'_2$, so it suffices to prove $|W'_1|\ge m/5$. Suppose that $|W'_1|< m/5$. By Lemma \ref{lemmaspliting} (ii) and Lemma \ref{lemmabad}, we have
\begin{eqnarray*}
d_{\mathcal{G}_{i-1}}(x)
&\le& 10^{-3}m^3+|W'_1| {|W'_2| \choose 2} \\
&\le& 10^{-3}m^3+{m \over 5} \cdot {8m^2 \over 25} \\
&<& ({9 \over 128}-\alpha)m^3,
\end{eqnarray*}
\end{proof}
which contradicts Lemma \ref{lemmaspliting} (i).

Case 2. Adding $A_v$ from $ Q_{i,2}$ to $ W_1$. For this case $ Q_{i,1} \subseteq W'_1$, so it suffices to prove $|W'_2| \ge 3m/5$. Otherwise suppose $|W'_2|< 3m/5$. By Lemma \ref{lemmaspliting} (ii) and Lemma \ref{lemmabad}, we have
\begin{eqnarray*}
d_{\mathcal{G}_{i-1}}(x)
&\le& 10^{-3}m^3+{|W'_2| \choose 3} \\
&<& ({9 \over 128}-\alpha)m^3,
\end{eqnarray*}
which contradicts Lemma \ref{lemmaspliting} (i). We can now state our next lemma.

\begin{lemma}\label{lemmabad1}
Every edge of $ \mathcal{G}_{i-1} $ is good.
\end{lemma}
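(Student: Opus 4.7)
I would prove the lemma by contradiction: suppose some edge $e^{*}\in \mathcal{G}_{i-1}$ is non-good. Recall from the preceding discussion that every non-good edge $e$ must satisfy $|e\cap A_v|=1$. The argument naturally splits according to whether $|e^{*}\cap W'_{1}|\ge 2$ or $e^{*}\subseteq W'_{2}$.

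\textbf{Step 1: Rule out non-good edges with $|e\cap W'_{1}|\ge 2$.} I would apply Lemma \ref{lemmagen} to $\mathcal{G}_{i-1}$ with $A=W'_{1}$ and $B=W'_{2}$, aiming to conclude there is no edge of $\mathcal{G}_{i-1}$ with $|e\cap W'_{1}|\ge 2$. This disposes of bad edges of the first type ($|e\cap W'_{1}|=2$), all very bad edges, and all worst edges in one shot. Condition (1) of Lemma \ref{lemmagen} holds with any $c\le 1/5$ by the just-proved bounds $|W'_{1}|\ge m/5$ and $|W'_{2}|\ge 3m/5$. Condition (2) (minimum degree) follows from Lemma \ref{lemmaspliting}(i). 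For condition (3), I need $d_{\mathcal{B}'\setminus \mathcal{G}_{i-1}}(x)\le \gamma m^{3}$ for every $x\in W'_{1}$, where $\mathcal{B}'=W'_{1}\times {W'_{2}\choose 3}$: for $x\in W'_{1}\setminus A_v$ this should transfer from Lemma \ref{lemmaspliting}(iii) applied to $\mathcal{Q}_{i}$, using that the link of $x$ in $\mathcal{G}_{i-1}$ and $\mathcal{G}_{i}$ differs only on edges meeting $A_u\cup A_v$; for $x\in W'_{1}\cap A_v$, I would combine Lemma \ref{lemmabad} (at most $10^{-3}m^{3}$ non-good edges through $x$) with the minimum-degree bound of Lemma \ref{lemmaspliting}(i).

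\textbf{Step 2: Rule out bad edges $e\subseteq W'_{2}$.} After Step 1, the only remaining non-good edges are bad ones contained in $W'_{2}$, and each such edge has exactly one vertex in $A_v$. I would use minimality of $\Sigma$. Consider the alternative partition $\{W''_{1},W''_{2}\}$ obtained by adding $A_v$ to the opposite side of $W_{1}, W_{2}$. Every bad edge of the current partition contained in $W'_{2}$ has its unique $A_v$-vertex moved to $W''_{1}$ and its other three vertices still in $W''_{2}$, so becomes good. By Step 1 there are no other non-good edges to account for, so the only additional contribution to $\Sigma(\{W''_{1},W''_{2}\})$ comes from currently-good edges whose $A_v$-vertex switches sides. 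Using Lemma \ref{lemmabad} to control the latter and the assumption that $N>0$ such bad edges exist, a direct bookkeeping would yield $\Sigma(\{W''_{1},W''_{2}\})<\Sigma(\{W'_{1},W'_{2}\})$, contradicting the minimality defining $\{W'_{1},W'_{2}\}$.

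\textbf{Main obstacle.} The technical heart is verifying condition (3) of Lemma \ref{lemmagen} for $x\in A_v\cap W'_{1}$. The crude bound $|W'_{2}|\le 4m/5$ gives ${|W'_{2}|\choose 3}$ already larger than $(9/128)m^{3}$, so one cannot conclude $d_{\mathcal{B}'\setminus \mathcal{G}_{i-1}}(x)\le \gamma m^{3}$ by subtracting the minimum degree from ${|W'_{2}|\choose 3}$ alone. I expect to need either a sharper bound $|W'_{2}|\le (3/4+\beta')m$ (bootstrapping from Lemma \ref{lemmabad} and the minimum-degree bound at an $A_v$-vertex), or a two-stage application of Lemma \ref{lemmagen}, first with $A\subseteq W'_{1}\setminus A_v$ and suitable $B$ where (3) is immediate, and then promoting the conclusion to the full partition using $K_{4,4}^{4}$-hom-freeness together with Lemma \ref{lemmabad}.
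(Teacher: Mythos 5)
Your proposal has a genuine gap in \textbf{Step 2}, and the obstacle you flag in \textbf{Step 1} is also not resolved in the way the paper resolves it.

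For Step 2, the minimality argument runs in the wrong direction. You propose to switch $A_v$ to the opposite side and claim that the bookkeeping yields $\Sigma(\{W''_1, W''_2\}) < \Sigma(\{W'_1, W'_2\})$, using Lemma \ref{lemmabad} to ``control'' the good edges that turn bad. But Lemma \ref{lemmabad} bounds the number of \emph{non-good} edges through a vertex of $A_v$ (at most $10^{-3}m^3$ each); it says nothing about good edges. Combined with the minimum-degree bound $d_{\mathcal{G}_{i-1}}(v)\ge (9/128-2\alpha)m^3$ from Lemma \ref{lemmaspliting}(i), each $v\in A_v$ lies in at least roughly $(9/128-2\alpha-10^{-3})m^3$ good edges, all of which become non-good after the switch. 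Meanwhile the bad edges through $A_v$ that become good number at most $|A_v|\cdot 10^{-3}m^3$ by Lemma \ref{lemmabad}. So the switch \emph{increases} $\Sigma$, consistent with the minimality of $\{W'_1,W'_2\}$ and giving no contradiction. The paper does not use minimality here; its Case 2 runs a direct $K_{4,4}^4$-hom-freeness argument: it lower-bounds the common ``good neighbourhoods'' $B_1, B_2$ of the four vertices $x,y,z,w$ of the bad edge, locates an edge $e'$ with one vertex in $B_1$ and three in $B_2$, and assembles a copy of the extension $K_{4,4}^4$ by covering each pair $(a,b)\in e\times e'$, contradicting hom-freeness.

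For Step 1, you correctly identify that Lemma \ref{lemmagen} condition (3) is problematic for $x\in A_v\cap W'_1$, and more generally that Lemma \ref{lemmaspliting}(iii) is stated for $\mathcal{Q}_i$ and $\mathcal{G}_i$, not for the tentative partition $\{W'_1, W'_2\}$ and $\mathcal{G}_{i-1}$. Neither of your two suggested workarounds is established: the paper never proves $|W'_2|\le(3/4+\beta')m$ for the pre-split partition (it only has $|W'_2|\le 4m/5$), and the ``two-stage application'' is left entirely unspecified. What the paper actually does in Case 1 is re-run a Lemma-\ref{lemmagen}-style argument from scratch: starting from vertices $a,b\in e\cap W'_1$, it builds large disjoint families $M_a,M_b$ of triples in the joint neighbourhood $B(a)\cap B(b)$ using only the minimum-degree bound and Lemma \ref{lemmabad}, invokes hom-freeness to find, for every pair $(f_1,f_2)\in M_a\times M_b$, an uncovered pair $\{w_1,w_2\}$, and then deliberately derives the final degree contradiction at a vertex $w_3\in W_2$ (so $w_3\notin A_v$) against $d_{\mathcal{B}_i\setminus\mathcal{G}_i}(w_3)$ — i.e.\ against the bound that is already available for the partition $\mathcal{Q}_i$ and graph $\mathcal{G}_i$. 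This sidesteps the need for condition (3) of Lemma \ref{lemmagen} on the new partition.
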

\begin{proof}
Assume for the sake of contradiction that $e \in \mathcal{G}_{i-1}$ is not a good edge.

{\em Case 1.} $|e \cap W'_1|\ge 2$.

Let $a, b \in e$ satisfy $a, b \in W'_1$.
For $x=a$ or $b$ let
$$B(x):=\{x' \in W'_2 \setminus e: \exists e' \in \mathcal{G}_{i-1} \:\: {\rm such} \:\: {\rm that} \:\: \{x,x'\}\subseteq e'\}.$$
We claim that $ |B(a)\cap B(b)|\ge m/2$.
Otherwise $ |B(a)|\le |W'_2|/2+ m/4$ or $ |B(b)|\le |W'_2|/2+ m/4$. Without loss of generality we can assume that $ |B(a)|\le |W'_2|/2+ m/4$, then $ |B(a)|\le 2m/5+ m/4=13m/20$. Hence $d_{\mathcal{G}_{i-1}}(a)\le {13m/20 \choose 3}+10^{-3}m^3< ({9 \over 128}-\alpha)m^3$, which contradicts Lemma \ref{lemmaspliting} (i).

We claim that there are at least $ m/100$ disjoint triples in $B(a)\cap B(b)$ belonging to $L_{\mathcal{G}_{i-1}}(a)$. Otherwise
$d_{\mathcal{G}_{i-1}}(a)\le {4m/5 \choose 3}-{m/2-3m/100 \choose 3}+10^{-3}m^3< ({9 \over 128}-\alpha)m^3$, which contradicts Lemma \ref{lemmaspliting} (i). Fix such a set of $ m/100$ triples as $M_a$.

We also claim that there are at least $ m/100$ disjoint triples in $(B(a)\cap B(b))\setminus (\cup_{f \in M_a}f)$ belonging to $L_{\mathcal{G}_{i-1}}(b)$. Otherwise
$d_{\mathcal{G}_{i-1}}(b)\le {4m/5 \choose 3}-{m/2-6m/100 \choose 3}+10^{-3}m^3< ({9 \over 128}-\alpha)m^3$, which contradicts Lemma \ref{lemmaspliting} (i). Fix such  a set of $ m/100$ triples as $M_b$.

Since $\mathcal{G}_{i-1}$ is $K_{4,4}^4$-hom-free, then $\forall  f_1 \in M_a$ and $ f_2 \in M_b$, $\exists w_1\in f_1$ and $w_2\in f_2$ such that $\{w_1,w_2\}$ is not contained in any edge of $\mathcal{G}_{i-1}$.
Let $w_3 \in W_2 \setminus (\cup_{f \in M_a\cup M_b}f)$ (there exists such a vertex), then $d_{\mathcal{B}_i \setminus \mathcal{G}_i}(w_3)\ge |M(a)|\cdot|M(b)|\cdot(|W_1|-m/50)>\gamma m^3$
, which contradicts Lemma \ref{lemmaspliting} (iii).

{\em Case 2.} $e=xyzw \subseteq W'_2$.

For $j=1,2$ and $a\in \{x,y,z,w\}$ let
$$B_j(a):=\{a' \in W'_j: \exists e' \in \mathcal{G}_{i-1} \:\: {\rm such} \:\: {\rm that} \:\: \{a,a'\}\subseteq e'\}.$$
We claim that $|B_1(a)|\ge m/6$ and $|B_2(a)|\ge m/2$ for every $a\in \{x,y,z,w\}$.
We first prove that $|B_1(a)|\ge m/6$. Otherwise suppose that $|B_1(a)|< m/6$.
Then $d_{\mathcal{G}_{i-1}}(a)< 10^{-3}m^3+|B_1(a)|{|W'_2| \choose 2} \le 10^{-3}m^3+{m \over 6}{4m/5 \choose 2} <({9 \over 128}-\alpha)m^3$,
which contradicts Lemma \ref{lemmaspliting} (i).
Now we prove that $|B_2(a)|\ge m/2$. Otherwise, in a similar way as above, $d_{\mathcal{G}_{i-1}}(a)<10^{-3}m^3+|W'_1|{m/2 \choose 2}\le 10^{-3}m^3+2m/5{m/2 \choose 2} <({9 \over 128}-\alpha)m^3$, which contradicts Lemma \ref{lemmaspliting} (i).

Now we show that almost all vertices in $ B_1(a)\cup B_2(a)$ are adjacent to $y,z,w$. Otherwise without loss of generality assume there are $10\gamma m$ vertices of $B_1(a)$ which are not adjacent to $y$. Then
$$d_{\mathcal{B}_i\setminus \mathcal{G}_i}(y)> 10\gamma m {3m/5 \choose 2}>\gamma m^3,$$
 contradicting to Lemma \ref{lemmaspliting} (iii).
Let $B_1 \subseteq W'_1$ and $B_2 \subseteq W'_2$ be the common neighbourhood of $x,y,z,w$.
 Since almost all vertices in $V_t$ are the neighbours of every vertex in $W'_2$, so we can assume that $|B_1|\ge m/7$ and $|B_2|\ge m/3$.
 It is clearly that there is an edge $e' \in \mathcal{G}_{i-1}$ with one vertex in $B_1$ and another three vertices in $B_2$.
 Then $\forall a \in xyzw$ and $ b \in e'$, there exists one edge $e_{ab}$ of $\mathcal{G}_{i-1}$ such that $\{a,b\}\subseteq e_{ab}$. Hence $\{e_{ab}:a \in \{x,y,z,w\}, b \in e'\}\cup \{xyzw,e'\}$ forms a configuration contradicting that $\mathcal{G}_{i-1}$ is $K_{4,4}^4$-hom-free.
\qed
\end{proof}
\medskip

This shows that $\mathcal{Q}_{i-1}$ satisfies (P3), so splitting has the required properties. It terminates with $\mathcal{Q}_{0}$ such that $\mathcal{F}[V_t]=\mathcal{H}_{0}[V_t] \subseteq Q_{0,1}\times {Q_{0,2} \choose 3}$.
Let $A=Q_{0,1}\cup (V(\mathcal{F})\setminus V_t)$ and $B=Q_{0,2}$. Then by Lemma \ref{lemmav}, we have $|\{e \in \mathcal{F}: |e \cap A| \ge 2\}|+|\{e \in \mathcal{F}: e \subseteq B\}| < \varepsilon n^4 $. This concludes the proof of Theorem \ref{theostability}.

\subsection{Proof of Theorem \ref{theorem}}

Let $\mathcal{F}=(V,E)$ be a maximum  $K_{4,4}^4$-free $4$-graph on $n$ vertices, where $n$ is sufficiently large. Since $S^4(n)$ is $K_{4,4}^4$-free, $|\mathcal{F}|\ge |S^4(n)|$.
Similar to \cite{HK} (in the first paragraph of Section 4.2), it suffices to prove Theorem \ref{theorem} under the  assumption that the minimum degree of $\mathcal{F}$ is at least $\delta(S^4(n))$.
Indeed, assume we have proved Theorem \ref{theorem} for every maximum $K_{4,4}^4$-free $4$-graph $\mathcal{F}$ on $n\ge n_0$ vertices and minimum degree at least $\delta(S^4(n))$.
Let $\mathcal{H}_n$ be a maximum $K_{4,4}^4$-free $4$-graph on $n\ge n_0^4$ vertices, delete the vertex of $\mathcal{H}_n$ with degree less than $\delta(S^4(n))$ until $\delta(\mathcal{H}_m)\ge \delta(S^4(m))$ or all vertices have been deleted.
By some easy calculations we can show that $m\ge n_0$. But $\mathcal{H}_m$ has minimum degree at least $\delta(S^4(m))$ and strictly more than $e(S^4(m))$ edges, which contradicts our assumption.

Let $c_1, c_2, c_3$ and $\varepsilon>0$ be real numbers satisfying $$\varepsilon \ll c_3 \ll c_2 \ll c_1 \ll 1.$$
Let $V=W_1\cup W_2$ be a partition of the vertex set of $\mathcal{F}$ which minimizes
$$\Sigma':=|\{e\in E: |e \cap W_1|=2 \:\: {\rm or } \:\: e\subseteq W_2\}|+2|\{e\in E: |e \cap W_1|=3\}|+3|\{e\in E: e \subseteq W_1\}|.$$
By Theorem \ref{theostability} we can assume that $\Sigma' <\varepsilon n^4$. Similar to before, we have
$$|W_1|=({1 \over 4}\pm c_3)n \:\: {\rm and} \:\: |W_2|=({3 \over 4}\pm c_3)n.$$

Similar to the proof of Theorem \ref{theostability}, we call an edge $e \in E$ {\em bad} if $|e \cap W_1|=2$ or $e \subseteq W_2$, {\em very bad}  if $|e \cap W_1| \ge 3$, {\em good} otherwise. Equivalently, we need to show that $\Sigma'=0$.
We say that a vertex $v\in V$ is {\em bad} if it is contained in at least $30c_1 n^3$ edges which are not good.
Before proving that all edges are good, we will prove that any vertex of $V$ cannot be contained in too many edges which are not good.

\begin{lemma}\label{lemmabad2}
There are no bad vertices.
\end{lemma}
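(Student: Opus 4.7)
I argue by contradiction. Suppose a vertex $v \in V$ is bad. I treat in detail the case $v \in W_1$; the case $v \in W_2$ follows the same template after adjusting which move across the partition decreases $\Sigma'$ and tracking the three different edge types (with $|e\cap W_1| \in \{0,2,3\}$) instead. For $k \in \{2,3,4\}$ let $n_k(v)$ be the number of edges $e \ni v$ with $|e \cap W_1| = k$, and let $g(v)$ be the number of good edges through $v$; the badness of $v$ reads $n_2(v) + n_3(v) + n_4(v) \ge 30 c_1 n^3$.

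The first step invokes minimality of $\Sigma'$. Moving $v$ from $W_1$ to $W_2$ changes $\Sigma'$ by $g(v) - n_2(v) - n_3(v) - n_4(v)$: each good edge through $v$ turns into a 4-set contained in the new $W_2'$ and contributes $+1$, while each edge through $v$ with $|e\cap W_1| = k \in \{2,3,4\}$ has its $\Sigma'$-contribution drop by exactly $1$. Minimality forces this net change to be nonnegative, so $g(v) \ge 30 c_1 n^3$. In particular the link $L_{\mathcal{F}}(v)$ contains at least $30 c_1 n^3$ triples in $\binom{W_2}{3}$, from which a standard greedy argument extracts a matching $M$ of pairwise-disjoint triples of size $|M| \ge c_2 n$. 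By pigeonhole, fix $k \in \{2,3,4\}$ with $n_k(v) \ge 10 c_1 n^3$ and fix one edge $e = \{v, x_1, x_2, x_3\}$ of that type.

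The second step constructs a forbidden configuration. I seek a good edge $f = \{u, y_1, y_2, y_3\}$, with $u \in W_1 \setminus e$ and $\{y_1, y_2, y_3\} \in M$ disjoint from $e$, such that each of the sixteen cross-pairs of $e \times f$ is covered by an edge of $\mathcal{F}$; given such $(e,f)$, together with a choice of covering edge for each cross-pair, one obtains a homomorphic copy of $K_{4,4}^4$ in $\mathcal{F}$, contradicting the hom-freeness. The coverage of cross-pairs is controlled through two observations. Since $|\mathcal{F}| \ge |S^4(n)|$ while the maximum possible number of good edges is $|W_1|\binom{|W_2|}{3} \le \left(\tfrac{9}{512} + O(c_3)\right)n^4$, at most $O(c_3) n^4$ potential good edges are missing, and hence only $O(c_3 n^2)$ pairs in $W_1 \times W_2$ or in $W_2 \times W_2$ fail to be covered by some good edge. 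On the other hand, the non-good edges through the bad vertex $v$ cover pairs $(v, u)$ of $W_1 \times W_1$ type for at least $\Omega(c_1 n)$ choices of $u$, since the $\ge 30 c_1 n^3$ non-good edges through $v$ have each $(v,u)$-pair lying in at most $\binom{n-2}{2}$ of them.

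The main obstacle is covering cross-pairs of type $W_1 \times W_1$ that do \emph{not} involve $v$; these arise exactly when $k \ge 3$, and since no good edge meets $W_1$ in two vertices, they can only be covered by non-good edges, whose total count $\le \Sigma' < \varepsilon n^4$ is small. To close this gap I would either pass to a sub-case where some $x_i \in e \cap W_1 \setminus \{v\}$ is itself bad (and the same argument applies symmetrically at $x_i$, producing a non-good partner whose link meets $W_1$ in many vertices), or follow the case-split style of Lemma~\ref{lemmabad} and replace $e$ within the large family of non-good edges through $v$ so that all of its $W_1$-vertices lie in enough non-good edges together. The hierarchy $\varepsilon \ll c_3 \ll c_2 \ll c_1 \ll 1$ is set up precisely to absorb the $o(n^2)$ exceptional pairs that arise at each stage of this bookkeeping, leaving $\Omega(c_2 n)$ valid choices for $(u, \{y_1, y_2, y_3\})$ and therefore the desired $K_{4,4}^4$-homomorphism.
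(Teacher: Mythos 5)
Your proposal identifies the right starting observation (minimality of $\Sigma'$ gives $g(v)\ge n_2(v)+n_3(v)+n_4(v)\ge 30c_1 n^3$, matching the paper's Claim~2) but has a genuine gap that you yourself flag: your intended forbidden configuration pairs the non-good edge $e\ni v$ against a good edge $f$, and then you have no way to cover the $W_1\times W_1$ cross-pairs of $e\times f$ that do not involve $v$. Neither of your suggested escape routes is carried out, and in fact both are the wrong direction. The paper avoids this obstruction by \emph{not} using a non-good edge as one side of the $K_{4,4}^4$. Instead it picks a non-good edge $f'=f\cup\{x\}$ through the bad vertex $x\in W_1$ only to extract a second $W_1$-vertex $a\in f\cap W_1$; the actual two ``core'' edges of the forbidden configuration are both \emph{good} edges $e_1\cup\{x\}$ and $e_2\cup\{a\}$ with $e_1,e_2\subseteq W_2$. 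Then the only $W_1\times W_1$ cross-pair is $\{x,a\}$, which is already covered by the non-good edge $f'$ itself; all other cross-pairs lie in $W_1\times W_2$ or $W_2\times W_2$, where coverage can be arranged via the sets $B(x)$, $B_{f_a}$ and a counting argument. This is the key structural idea your write-up is missing, and it is what makes the case $k\ge3$ tractable.

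Two further points. First, the contradiction in the paper's proof is not ``exhibit a homomorphic copy of $K_{4,4}^4$, contradicting hom-freeness'': in Section~4.2 the hypothesis on $\mathcal{F}$ is only $K_{4,4}^4$-freeness, so one must produce a genuine (injective) copy. The paper ensures this by demanding each relevant pair be covered by at least $40n$ edges, which leaves room for a greedy choice of pairwise-disjoint extensions; your proposal relies on mere coverage of cross-pairs, which would only yield a homomorphism. Second, and more importantly, the final contradiction in the paper is not the construction of $K_{4,4}^4$ per se but a counting argument: freeness forces, for each pair $e_1,e_2\in M_{f_a}$, some $W_2$-pair to lie in fewer than $40n$ edges, and summing these deficiencies over $\binom{|M_{f_a}|}{2}$ pairs produces more than $\varepsilon n^4$ missing edges in $W_1\times\binom{W_2}{3}$, contradicting $|E|\ge|S^4(n)|$. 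Your outline gestures at such a count (``at most $O(c_3)n^4$ potential good edges are missing'') but does not connect it to the construction, so the proof as written does not close.
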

\begin{proof}
Assume for the sake of contradiction that $x \in V$ is a bad vertex. First suppose that $x \in W_1$. Then all edges containing $x$ which are not good intersect $W_1$ with at least two vertices.
Denote
$F_x=\{e\in E: x \in e, |e \cap W_1|\ge 2\}$ and $I_x=\{f \in {V\choose 3}: f \cup \{x\} \in F_x\}$. Then
$|F_x|=|I_x|\ge 30c_1 n^3$.
\newline
Claim 1. There is a set $K \subseteq I_x$ with $|K|=10c_1n$ such that $\forall f_1, f_2 \in K$, $f_1 \cap f_2=\emptyset$. This is because that $\forall f \in I_x$, $f$ intersects less than $3{n \choose 2}$ elements of $I_x$.
Denote
$$B(x):=\{v \in W_2\backslash (\cup_{f\in I_x}f):\: \{v,x\} \:{\rm is} \: {\rm contained} \: {\rm in} \:{\rm at} \:{\rm least} \: 40n \: {\rm edges} \:  {\rm of} \: \mathcal{F}\}.$$
Claim 2. $|B(x)|\ge n/20$. By the minimality of $\Sigma'$, the number of good edges of $\mathcal{F}$ containing $x$ is no less than the number of edges in $\mathcal{F}$ which are not good containing $x$. Then
$|W_2 \setminus B(x)|40n+|B(x)|{|W_2|\choose 2}>d(x)/2\ge (9/256-c_1/2)n^3$, so $|B(x)|\ge n/20$.
\newline
Claim 3. There are at least $|K|/2 = 5c_1n$ elements $f=\{a,b,c\}\in K$ with $a \in W_1$ such that there is $B_{f_a} \subseteq B(x)$ of size $c_1n$ satisfying that $\{v, a\}$ is contained in at least $40n$ edges of $\mathcal{F}$ for every $v \in B_{f_a}$.
Otherwise there are at least $|K|/2$ elements $f=\{a,b,c\}\in K$ with $a \in W_1$ such that for all but at most $c_1n$ vertices $v$ in $B(x)$, $\{v, a\}$ is contained in at most $40n$ edges of $\mathcal{F}$.
Then there are at least ${|K| \over 2} \left({|B(x)\setminus B_{f_a}| \choose 3}-40n^2\right)> \varepsilon n^4$ edges in $\left(W_1\times {W_2 \choose 3}\right)\setminus \mathcal{F}$, contradicting that $|E|\ge |S^4(n)|$.
Denote the set of elements $f \in K$ satisfying the property above as  $K'$.
\newline
Claim 4. There is $f \in K'$ with $a \in W_1 \cap f$ such that there is a set $M_{f_a} \subseteq {B_{f_a} \choose 3}$ of size $c_2n$ satisfying $e_1 \cup \{a\}\in \mathcal{F}$, $e_1\cup \{x\} \in \mathcal{F}$ and $e_1 \cap e_2=\emptyset$ for all different $e_1, e_2 \in M_{f_a}$.
Otherwise for each $f \in K'$ with $a \in W_1 \cap f$, fix a maximum set $M_{f_a} \subseteq {B_{f_a} \choose 3}$ satisfying $e_1 \cup \{a\}$, $e_1\cup \{x\} \in \mathcal{F}$ and $e_1 \cap e_2=\emptyset$ for all different $e_1, e_2 \in M_{f_a}$.
Then $|M_{f_a}|<c_2n$ and for every triple $g \in {B_{f_a} \setminus (\cup_{e'\in M_{f_a}}e') \choose 3}$, $g\cup \{a\} \notin \mathcal{F}$ or $g\cup \{x\} \notin \mathcal{F}$ (or both).
So there are at least ${|K| \over 2}{|B_{f_a}|-3c_2n \choose 3}> \varepsilon n^4$ edges in $\left(W_1\times {W_2 \choose 3}\right)\setminus \mathcal{F}$, contradicting that $|E|\ge |S^4(n)|$.

Since $\mathcal{F}$ is $K_{4,4}^4$-free, for every pair $e_1,e_2 \in M_{f_a}$, $\exists u \in e_1$ and $v \in e_2$ such that there are at most $40n$ edges of $\mathcal{F}$ containing $\{u,v\}$.  Otherwise we can greedily choose vertices to extend $e_1\cup \{x\}$ and $e_2\cup \{a\}$  to a copy of $K_{4,4}^4$.
Hence there are at least ${|M_{f_a}|\choose 2}\left(|W_1||W_2\backslash (\cup_{e\in M_{f_a}}e)|-40n\right)> \varepsilon n^4$ edges in $\left(W_1\times {W_2 \choose 3}\right)\setminus \mathcal{F}$, contradicting that $|E|\ge |S^4(n)|$.

Now suppose that $x \in W_2$. We divide it into two cases according to whether there are half  edges containing $x$ which are not good contained in $W_2$. The proof of both cases is similar to the above.
We first prove the case that there are at least $15c_1 n^3$ edges containing $x$ which are not good contained in $W_2$.
Fix a set of those edges removing $x$ as $L'(x)$. Consider a maximum matching $M(x)$ in $L'(x)$.
Then $|M(x)|\ge c_1 n$. Fix $M'\subseteq M(x)$ with $|M'|=c_2 n$. Note that $\bigcup_{f\in M'}f \subseteq W_2$.
For $j=1,2$, let
$$B'_j(x):=\{x' \in W_j: \: {\rm there} \:{\rm are} \:{\rm at} \:{\rm least} \: 40n \: {\rm edges} \:  {\rm of} \: \mathcal{F} \: {\rm containing} \:\{x,x'\}\}.$$
Claim 5. $|B'_1(x)|\ge 3c_2 n$ and $|B'_2(x)|\ge 5c_2 n$. Since there are at least $15c_1 n^3$ edges  containing $x$ which are not good such that each is contained in $W_2$, it is clearly that $|B'_2(x)|\ge 5c_2 n$.
If $|B'_1(x)|< 3c_2 n$, we get a new partition by moving $x$ from $W_2$ to $W_1$ with smaller $\sum$, which contradicts the minimality of $\Sigma'$.
Fix $B''_1(x) \subseteq B'_1(x)$ and $B''_2(x) \subseteq B'_2(x)\setminus (\bigcup_{f\in M'}f)$ with $|B''_1(x)|=2c_2 n$ and $|B''_2(x)|=4c_2 n$.
Consider a maximum matching of $\mathcal{F}$, denoted as $M''$, with one vertex in $B''_1(x)$ and another three vertices in $B''_2(x)$.
\newline
Claim 6. $|M''|\ge c_2 n$. Otherwise there are at least $c_2 n {c_2 n \choose 3}>\varepsilon n^4$ edges in $(W_1\times {W_2 \choose 3})\setminus \mathcal{F}$, a contradiction.
Since $\mathcal{F}$ is $K_{4,4}^4$-free, for every $f \in M'$ and $e \in M''$ there are $a \in f$ and $b \in e$ such that there are at most $40n$ edges of $\mathcal{F}$ containing $\{a,b\}$; otherwise we can greedily choose vertices to extend $f\cup \{x\}$ and $e$ to a copy of $K_{4,4}^4$.
Hence we can find more than $|M'|\cdot|M''|\cdot{n\over 5}\cdot{3n\over 5}-40n^3>\varepsilon n^4$ elements in $\left({W_1\times {W_2 \choose 3}}\right) \setminus \mathcal{F}$ and this contradicts that $|E|\ge |S^4(n)|$.

The last case is that  there are at least $15c_1 n^3$ edges containing $x$ which are not good such that each intersects $W_1$ with at least two vertices.
Denote
$F'_x=\{e\in \mathcal{F}: x \in e, |e \cap W_1|\ge 2\}$ and $I'_x=\{f: f \cup \{x\} \in F_x\}$. Since $\forall f \in I'_x$, $f$ intersects less than $3{n \choose 2}$ elements of $I'_x$, there is a set $P\subseteq I'_x$ with size $5c_1n$ such that $\forall f_1, f_2 \in P$, $f_1 \cap f_2=\emptyset$.

Claim 7. There are at least $c_1 n$ elements $f=\{y,z,w\} \in P$, denoted as $P'$, where $y,z \in W_1$ such that there is a set $V'_{f_{yz}} \subseteq W_2$ with size at least $n/10$ such that $\{y,v\}$ and $\{z,v\}$ are both contained in at least $40n$ edges of $\mathcal{F}$ for every $v \in V'_{f_{yz}}$;
otherwise there are at least $c_1 n\cdot \left({|W_2|-n/10 \choose 3}-40n^2\right)>\varepsilon n^4$ edges in $(W_1\times {W_2 \choose 3})\setminus \mathcal{F}$, a contradiction.

For every $f=\{y,z,w\} \in P'$, where $y,z \in W_1$ consider the set $J'_{yz} \subseteq {V'_{f_{yz}}\setminus f \choose 3}\cap L(y)\cap L(z)$ satisfing all elements of $J'_{yz}$ are pairwise disjoint.

Claim 8. There is at least an element $f'=\{y',z',w'\}$, where $y',z' \in W_1$ such that $|J'_{y'z'}|\ge c_1 n$; otherwise there are at least $|P'|\cdot {|V'_{f'_{y'z'}}|-3c_1 n \choose 3}>\varepsilon n^4$ edges in $\left(W_1\times {W_2 \choose 3}\right)\setminus \mathcal{F}$, a contradiction.

Since $\mathcal{F}$ is $K_{4,4}^4$-free, for every pair $f_1, f_2 \in J'_{y'z'}$, there are $a \in f_1$ and $b \in f_2$ such that there are at most $40n$ edges of $\mathcal{F}$ containing $\{a,b\}$; otherwise we can greedily choose vertices to extend $f_1 \cup \{y'\}$ and $f_2\cup \{z'\}$ to a copy of $K_{4,4}^4$.
Hence we can find at least $ {|J'_{y'z'}|\choose 2}\left(|W_1|\cdot |W_2\setminus (\cup_{f \in J'_{y'z'}}f)|-40n\right)>\varepsilon n^4$ edges in $(W_1\times {W_2 \choose 3})\setminus \mathcal{F}$, a contradiction.
\end{proof}
\qed
\medskip

Finally, we show that every edge of $\mathcal{F}$ is good. Suppose that $e=xyzw \in \mathcal{F}$ is an edge which is not good.
First we assume that $|e \cap W_1|\ge 2$.
Without loss of generality assume that $x, y \in W_1$.
Denote
$$B(x):=\{v \in W_2\backslash e:\: \{v,x\} \:{\rm is} \: {\rm contained} \: {\rm in} \:{\rm at} \:{\rm least} \: 40n \: {\rm edges} \:  {\rm of} \: \mathcal{F}\}.$$
We claim that $|B(x)|\ge 7n/10$. Otherwise
$d_{\mathcal{F}}(x)\le {7n/10 \choose 3}+40 n^2+ 30c_1 n^3<\delta(S^4(n))$, a contradiction.
Consider the family of triples $L_x \subseteq {B(x) \choose 3}$ satisfying $f \cap g =\emptyset$ for every pair $f, g \in L_x$.
Similar as before, we claim that $|L_x|\ge n/5$. Otherwise suppose that $|L_x|< n/5$, then $d_{\mathcal{F}}(x)\le {|W_2| \choose 3}-\left({|W_2| -3n/5 \choose 3}-40n^2\right)+ 30c_1 n^3<\delta(S^4(n))$, a contradiction. Fix $L'_x\subseteq L_x$ with $|L'_x|=n/10 $.
Let $$B(y):=\{v \in B(x)\setminus (\cup_{e\in L'_x}e):\: \{v,y\} \:{\rm is} \: {\rm contained} \: {\rm in} \:{\rm at} \:{\rm least} \: 40n \: {\rm edges} \:  {\rm of} \: \mathcal{F}\}$$
and
$L_y \subseteq {B(y) \choose 3}$ satisfying $f \cap g =\emptyset$ for every pair $f, g \in L_y$.
Similarly, we have $|L_y|\ge n/10$.

Since $\mathcal{F}$ is $K_{4,4}^4$-free, then for every $f_1 \in L'_x$ and $f_2 \in L_y$, there are $a \in f_1$ and $b \in f_2$ such that there are at most $40n$ edges of $\mathcal{F}$ containing $\{a,b\}$;
otherwise we can greedily choose vertices to extend $f_1\cup \{x\}$ and $f_2\cup \{y\}$ to form a copy of $K_{4,4}^4$. Hence we can find at least $|L'_x|\cdot|L_y|\cdot {n \over 5} \cdot {3n \over 5}-40 n^3  >\varepsilon n^4$ edges in $\left(W_1\times {W_2 \choose 3}\right)\setminus \mathcal{F}$, a contradiction.
\medskip

Now assume that $e=xyzw \subseteq W_2$.
For $j=1,2$, let
$$B_j(e):=\{v \in W_j: \: {\rm there} \:{\rm are} \:{\rm at} \:{\rm least} \: 40n \: {\rm edges} \:  {\rm of} \: \mathcal{F} \: {\rm containing} \:\{u,v\} \: {\rm for} \:  {\rm every} \: u \in e\}.$$
We claim that $|B_1(e)|\ge n/5$ and $|B_2(e)|\ge 3n/5$.
Otherwise, first suppose that $|B_1(e)|< n/5$. Since for
every $v\in W_1 \setminus B_1(e)$, there is at least one $u\in e$ such that $\{u,v\}$ is contained in less than $40n$ edges of $\mathcal{F}$.
So there exists at least one $u\in e$ and at least ${1 \over 4}|W_1 \setminus B_1(e)|$ vertices $v\in W_1 \setminus B_1(e)$ such that $\{u,v\}$ is contained in less than $40n$ edges of $\mathcal{F}$.
Then $ d_{\mathcal{F}}(u) \le {1 \over 4} \cdot 40n|W_1\setminus B_1(e)|+\left(|B_1(e)|+{3 \over 4}|W_1\setminus B_1(e)|\right){|W_2| \choose 2}+ 30c_1 n^3 \le 10n^2+\left({3\over 4}({1\over 4}+c_3 )n+{n \over 20}\right){(3/4+c_3 )n \choose 2}+ 30c_1 n^3 < \delta(S^4(n)) $, a contradiction.
Now suppose that $|B_2(e)|< 3n/5$. Similarly, there exists $u\in e$ such that
$ d_{\mathcal{F}}(u) \le {1 \over 4} \cdot 40n|W_2\setminus B_2(e)|+|W_1|{|B_2(u)|+{3 \over 4}|W_2\setminus B_2(e)| \choose 2}+ 30c_1 n^3 <\delta(S^4(n)) $, a contradiction.

Let $M$ be a maximum matching of edges with one vertex in $B_1(e)$ and another three vertices in $B_2(e)$. We claim that $|M|\ge n/6$.
Otherwise there is a vertex $u \in B_1(e)$ such that there are at least ${{3n\over 5}-3{n\over 6} \choose 3}$ triples in $B_2(e)$ not belonging to $L_{\mathcal{F}}(u)$. Then $d_{\mathcal{F}}(u)\le {({3 \over 4}+ c_3)n \choose 3}-{n/10 \choose 3}+30c_1 n^3<\delta(S^4(n)) $, a contradiction.

In fact, as long as there is one edge (good edge) $e'$ in $M$ we would get a contradiction, since we can find a copy of $K_{4,4}^4$ as follows:
for each vertex $u \in e'$ and  each vertex $v \in xyzw$, we can find an edge $e_{uv}$ of $\mathcal{F}$ greedily  such that the other two vertices of all these $e_{uv}$ are all different. This contradicts that $\mathcal{F}$ is $K_{4,4}^4$-free.
Then
$\mathcal{F}$ is isomorphic to $W_1\times{W_2 \choose 3}$. By the maximality of the number of edges of $\mathcal{F}$, $|W_1|=\lfloor{1 \over 4}n\rfloor$, or $|W_1|=\lceil{1 \over 4}n\rceil$ for $n \equiv 3$ {\rm (mod $4$)}.
\qed
\medskip

{\bf Remarks.} We learned that
 Norin, Watts, and Yepremyan\cite{NWY} had
 a proof of Conjecture \ref{HFcon} for all $r$. Our work here is independent of theirs. This manuscript uses some ideas from  \cite{JPW} such as reducing to  left compressed and dense hypergraphs and  the method may apply to $r$-graphs
without a matching of any fixed size
as in \cite{JPW} for $r=3$. The method can also be extended
to prove Conjecture \ref{HFcon} for $r=5$.

\end{document}